\def\mathcal{\mathscr}
\newtheorem{thm}{Theorem}[section]
\newtheorem{lem}[thm]{Lemma}
\newtheorem{cor}[thm]{Corollary}
\newtheorem{prop}[thm]{Proposition}
\theoremstyle{definition}
\newtheorem{rem}[thm]{Remark}
\newtheorem{defn}[thm]{Definition}
\newcommand{\mca}[1]{{\mathcal{#1}}}
\def\Z{{\mathbb Z}}
\def\C{{\mathbb C}}
\def\R{{\mathbb R}}
\def\con{\textit{\rm con}} 
\def\CF{\text{\rm CF}}
\def\const{\text{\rm const}}
\def\CM{\text{\rm CM}}
\def\CZ{\text{\rm CZ}}
\def\dist{\text{\rm dist}}
\def\ev{\text{\rm ev}}
\def\FH{\text{\rm FH}}
\def\HF{\text{\rm HF}}
\def\HM{\text{\rm HM}}
\def\id{\text{\rm id}}
\def\Im{\text{\rm Im}\,}
\def\ind{\text{\rm ind}}
\def\interior{\text{\rm int}}
\def\Morse{\text{\rm Morse}\,}
\def\SH{\text{\rm SH}}
\def\Star{\text{\rm Star}} 
\def\supp{\text{\rm supp}}
\def\std{\text{\rm std}}
\def\vol{\text{\rm vol}}
\begin{document}
\pagestyle{plain}
\thispagestyle{plain}

\title[Symplectic homology of disc cotangent bundles of domains in Euclidean space]
{Symplectic homology of disc cotangent bundles of domains in Euclidean space}

\author[Kei Irie]{Kei Irie}
\address{Research Institute for Mathematical Sciences, Kyoto University,
Kyoto 606-8502, Japan}
\email{iriek@kurims.kyoto-u.ac.jp}

\date{\today}

\begin{abstract}
Let $V$ be a bounded domain with smooth boundary in $\R^n$, and $D^*V$ denote its disc cotangent bundle. 
We compute symplectic homology of $D^*V$, in terms of relative homology of loop spaces on the closure of $V$. 
We use this result to show that the Floer-Hofer capacity of $D^*V$ is between $2r(V)$ and $2(n+1)r(V)$, where $r(V)$ 
denotes the inradius of $V$. 
As an application, we study periodic billiard trajectories on $V$. 
\end{abstract}

\maketitle

\section{Introduction}

\subsection{Main result} 
Let us consider the symplectic vector space $T^*\R^n$, with coordinates $p_1,\ldots,p_n,q_1,\ldots,q_n$ and 
the standard symplectic form $\omega_n:= dp_1 \wedge dq_1 + \cdots + dp_n \wedge dq_n$. 
For any bounded open set $U \subset T^*\R^n$ and real numbers $a<b$, one can define a $\Z_2$-module
$\SH_*^{[a,b)}(U)$, which is called \textit{symplectic homology}. 
This invariant was introduced in \cite{FH}. 
Our first goal is to compute $\SH_*^{[a,b)}(U)$, when $U$ is a disk cotangent bundle of a domain in $\R^n$. 

First let us fix notations. 
For any domain (i.e. connected open set) $V \subset \R^n$, its disc cotangent bundle $D^*V \subset T^*\R^n$ is defined as
\[
D^*V:= \{ (q,p) \in T^*\R^n \mid q \in V, |p| < 1 \}. 
\]
We use the following notations for loop spaces: 
\begin{itemize}
\item $\Lambda(\R^n):= W^{1,2}(S^1,\R^n)$, where $S^1:=\R/\Z$. 
\item $\Lambda^{<a}(\R^n):= \{ \gamma \in \Lambda(\R^n) \mid \text{length of $\gamma$} < a \}$. 
\item For any subset $S \subset \R^n$, we set 
\[
\Lambda(S):= \{ \gamma \in \Lambda(\R^n) \mid \gamma(S^1) \subset S\}, \quad 
\Lambda^{<a}(S):= \Lambda(S) \cap \Lambda^{<a}(\R^n). 
\]
\end{itemize}

Then, the main result in this note is the following: 

\begin{thm}\label{thm:main}
Let $V$ be a bounded domain with smooth boundary in $\R^n$, and $\bar{V}$ denote its closure in $\R^n$. 
For any $a<0$ and $b>0$, there exists a natural isomorphism 
\[
\SH^{[a,b)}_*(D^*V) \cong H_*\bigl(\Lambda^{<b}(\bar{V}), \Lambda^{<b}(\bar{V}) \setminus \Lambda(V) \bigr).
\]
Moreover, for any $0<b^-<b^+$ the following diagram commutes:
\[
\xymatrix{
\SH_*^{[a,b^-)}(D^*V) \ar[r]^-{\cong} \ar[d] &  H_*\bigl(\Lambda^{<b^-}(\bar{V}), \Lambda^{<b^-}(\bar{V}) \setminus \Lambda(V) \bigr)\ar[d] \\
\SH_*^{[a,b^+)}(D^*V) \ar[r]_-{\cong}       & H_*\bigl(\Lambda^{<b^+}(\bar{V}), \Lambda^{<b^+}(\bar{V}) \setminus \Lambda(V) \bigr).
}
\]
The left vertical arrow is a natural map in symplectic homology, and the right vertical arrow is induced by inclusion. 
\end{thm}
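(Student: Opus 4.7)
\emph{Proof plan.} The strategy is the classical Viterbo/Abbondandolo--Schwarz approach: realise both sides as Morse-type homologies of action functionals and compare via a chain-level isomorphism, taking care that every step commutes with enlargement of the action window so that the claimed commuting square comes out naturally. The first step is to realise $\SH^{[a,b)}_*(D^*V)$ as a direct limit of Floer homologies $\HF^{[a,b)}_*(H_{\epsilon,k})$ over a cofinal family of admissible Hamiltonians of split form
\[
H_{\epsilon,k}(q,p) = f_k(|p|) + U_\epsilon(q),
\]
where $f_k \colon \R_{\geq 0} \to \R$ is convex, vanishes near the origin, and is linear of slope greater than $b$ outside a compact set, and $U_\epsilon \geq 0$ vanishes on an inner $\epsilon$-approximation of $\bar V$ while growing steeply outside. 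The maximum principle, applied in the fibre direction to $f_k$ and in the base direction to $U_\epsilon$, yields the standard $C^0$-estimate, so that $1$-periodic orbits and Floer cylinders of action less than $b$ stay in an arbitrarily small neighbourhood of $D^*V$; this makes the family cofinal for the definition of $\SH^{[a,b)}_*(D^*V)$.

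For each $\epsilon,k$ the Hamiltonian $H_{\epsilon,k}$ is convex and superlinear in $p$, so the Abbondandolo--Schwarz isomorphism identifies $\HF^{[a,b)}_*(H_{\epsilon,k})$ with the singular homology of the sublevel set $\{\mca{S}_{L_{\epsilon,k}} < b\}$ of the Lagrangian action on $\Lambda(\R^n)$, where $L_{\epsilon,k}$ is the fibrewise Legendre transform of $H_{\epsilon,k}$. As $\epsilon \to 0$ and $f_k(|p|) \to |p|$, the potential term forces loops into $\bar V$ while the kinetic term degenerates to the length, so I expect a deformation retract of $\{\mca{S}_{L_{\epsilon,k}} < b\}$ onto $\Lambda^{<b}(\bar V)$. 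A parallel analysis of loops lying strictly inside $V$---where the potential is identically zero and the action reduces to an approximation of the length---identifies the subcomplex generated by such loops with $\Lambda^{<b}(\bar V)\setminus\Lambda(V)$; passing to the quotient yields the desired isomorphism, and the naturality of all constructions with respect to continuation maps and action-window enlargement produces the commuting square for $b^-<b^+$.

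The main obstacle will be this reduction: the Abbondandolo--Schwarz isomorphism was originally developed for a smooth closed base, whereas $\bar V$ has boundary and the confining potential $U_\epsilon$ degenerates in the limit. One must therefore establish the chain-level deformation retract carefully and uniformly, so that it detects precisely the loops touching $\partial V$; in particular, one needs good control on loops that linger in the region where $U_\epsilon$ is large but finite. Subsidiary issues---smoothing the corners of $\partial(D^*V)$ and checking invariance of $\SH^{[a,b)}$ under the smoothing, and choosing time-dependent perturbations that yield Floer transversality without destroying the split form above---are routine but will still need to be spelled out.
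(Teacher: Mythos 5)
Your overall strategy (Abbondandolo--Schwarz isomorphism for a cofinal family of split Hamiltonians, then pass to the limit) is the right one and matches the paper's Section 4, but several of your specific steps would break down in this setting, and they are exactly the places where the paper has to work hard.

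\textbf{Admissibility of the Hamiltonians.} You take $H_{\epsilon,k}(q,p)=f_k(|p|)+U_\epsilon(q)$ with $f_k$ linear of slope $>b$ at infinity. In the Floer--Hofer definition used here (condition (H1) in Section 2.1) admissible Hamiltonians must be $C^1$-close to a \emph{quadratic} $Q^a(q,p)=a(|q|^2+|p|^2)$ at infinity; a Hamiltonian that is linear in $|p|$ outside a compact set is not admissible in this sense. The paper's family $H'_m(q,p)=Q_m(q)+K_m(p)$ is carefully constructed so that both $Q_m$ and $K_m$ are quadratic at infinity, while their restrictions to $\overline{D^*V}$ converge to $0$. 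Using linear-at-infinity Hamiltonians would implicitly switch to Viterbo's framework, which is a different definition of $\SH$ than the one this theorem is about (the paper remarks explicitly on that distinction).

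\textbf{The $C^0$-estimate.} You invoke the maximum principle in both the fibre and base directions. For a linear radial $f_k$ the fibre-direction maximum principle is standard, but it would not apply to the quadratic Hamiltonians actually needed here, and more seriously there is no maximum principle controlling the $q$-direction on the noncompact base $\R^n$: a quadratic $Q^a$ generates rotations, not confinement, and $|u|^2$ is not subharmonic for Floer trajectories of such a Hamiltonian. This is precisely why Section 5 of the paper is devoted to $C^0$-estimates via a rescaling/blow-up argument, a $W^{1,2}$-bound extracted from the action (Proposition \ref{prop:L2}), and the Calderon--Zygmund inequality. Your claim that all trajectories of action $<b$ stay in a small neighbourhood of $D^*V$ is also stronger than what is true or needed; the paper only establishes uniform $C^0$-boundedness.

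\textbf{The identification of the limit.} You propose a deformation retract of the sublevel set $\{\mca{S}_{L_{\epsilon,k}}<b\}$ onto $\Lambda^{<b}(\bar V)$. That is not how the limit is taken, and the statement is not literally correct: the union $\bigcup_m \{\mca{S}_{L^m}<b\}$ is $\Lambda^{<b}(\R^n)\cup(\Lambda(\R^n)\setminus\Lambda(\bar V))$ — loops that leave $\bar V$ always appear, since $L^m\to -\infty$ there. The paper passes to the direct limit of homology, then uses singular excision to reduce to the pair $(\Lambda^{<b}(\R^n),\Lambda^{<b}(\R^n)\setminus\Lambda(\bar V))$, and then needs the genuinely nontrivial Lemma \ref{lem:excision} (with the auxiliary Lemmas \ref{lem:excision-1}, \ref{lem:excision-2}, using collar flows and a sequence of degenerating product metrics near $\partial V$) to pass to $(\Lambda^{<b}(\bar V),\Lambda^{<b}(\bar V)\setminus\Lambda(V))$. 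This step uses smoothness of $\partial V$ in an essential way and is not a ``routine'' matter of detecting loops touching the boundary; your proposal should flag it as a separate, substantial lemma rather than fold it into the deformation-retract claim.

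You correctly anticipate that the Abbondandolo--Schwarz machinery must be re-established for a noncompact base and that the degeneration of the potential is delicate; the missing pieces are the admissibility constraint (quadratic, not linear, at infinity), the replacement of the maximum principle by the rescaling/$W^{1,2}$/Calderon--Zygmund argument, and the excision lemma in loop space homology.
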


\subsection{Floer-Hofer capacity and periodic billiard trajectories}

By using symplectic homology, one can define the \textit{Floer-Hofer capacity}, which is denoted as $c_\FH$. 
The Floer-Hofer capacity was introduced in \cite{FHW}. We recall its definition in Section 2.4.
The Floer-Hofer capacity of a disk cotangent bundle $D^*V$ is important in the study of \textit{periodic billiard trajectories} on $V$
(for precise definition, see Definition \ref{def:billiard}):

\begin{prop}\label{prop:billiard}
Let $V$ be a bounded domain with smooth boundary in $\R^n$. 
Then, there exists a periodic billiard trajectory $\gamma$ on $V$ with at most $n+1$ bounce times such that 
$\text{length of $\gamma$} = c_\FH(D^*V)$. 
\end{prop}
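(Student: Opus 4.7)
The plan is to translate $c_\FH(D^*V)$ into a variational min--max problem via Theorem \ref{thm:main}, and then reduce that problem to finite dimensions in order to produce a closed billiard trajectory.

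First, unwinding the definition of $c_\FH$ and applying Theorem \ref{thm:main}, one expects $c_\FH(D^*V)$ to be characterized as the smallest $b>0$ for which a distinguished class in $H_*\bigl(\Lambda^{<b}(\bar{V}), \Lambda^{<b}(\bar{V}) \setminus \Lambda(V)\bigr)$ is detected by the map induced by inclusion from smaller levels; the commutative diagram in Theorem \ref{thm:main} guarantees that this threshold on the loop-space side agrees with the one defining $c_\FH$ on the symplectic homology side. Since $\Lambda^{<b}(\bar{V}) \setminus \Lambda(V)$ consists of loops of length $<b$ that touch $\partial V$, a relative cycle is represented by a family of loops in $\bar{V}$ whose subfamily meeting $\partial V$ carries prescribed homology. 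Accordingly, $c_\FH(D^*V)$ is the min--max value of the length functional $L$ on such families.

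Second, I would reduce the min--max to polygonal loops. Any $\gamma \in \Lambda^{<b}(\bar{V})$ can be shortened by straightening its arcs between successive hits of $\partial V$, giving a length-nonincreasing deformation onto the subspace $P \subset \Lambda(\bar{V})$ of closed polygonal paths with vertices on $\partial V$ and edges in $\bar{V}$. Crucially, a representative of the relevant relative class can be chosen inside the finite-dimensional stratum $P_{n+1} \subset P$ of polygons with at most $n+1$ vertices on $\partial V$; this uses the fact that $\R^n$ is $n$-dimensional, so the distinguished class driving $c_\FH$ is already detected at the level of $(n+1)$-gons.

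Third, apply a finite-dimensional min--max on $P_{n+1}$. The min--max value $c_\FH(D^*V)$ is attained at some closed polygonal path $\gamma$, and a standard Lagrange multiplier computation shows that a critical point of $L$ on polygons with vertices constrained to $\partial V$ satisfies the equal-angle billiard reflection law. Hence $\gamma$ is a periodic billiard trajectory with at most $n+1$ bounce times and length equal to $c_\FH(D^*V)$.

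The main obstacle will be the reduction to $P_{n+1}$: one must construct a length-nonincreasing deformation onto $P_{n+1}$ that is compatible with the relative pair structure (loops degenerating onto $\partial V$), and one must ensure that Palais--Smale type compactness holds on the non-smooth space of polygons, so that the min--max critical value is realized by a genuine billiard trajectory rather than by a degenerate configuration (a constant loop on $\partial V$, or a polygon with coincident vertices).
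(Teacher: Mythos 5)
Your overall direction---translating $c_\FH(D^*V)$ via Theorem \ref{thm:main} and Corollary \ref{cor:FH} into a min-max problem for the length functional on $\bigl(\Lambda^{<b}(\bar V),\Lambda^{<b}(\bar V)\setminus\Lambda(V)\bigr)$---is reasonable, and indeed Corollary \ref{cor:FH} is a genuine ingredient in the paper. But the heart of your argument is the reduction to polygons with at most $n+1$ vertices, and that step is a genuine gap: it is neither justified nor, as stated, correct, and it sits exactly where the mathematical content of the proposition lives.

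Two specific problems. First, the polygonal reduction itself fails for non-convex $V$: replacing an arc of $\gamma$ between successive boundary contacts by the straight chord is length-nonincreasing only if that chord lies in $\bar V$, which is false in general once $V$ is not convex. (The paper is explicitly aimed at non-convex domains; the convex case is \cite{AvOs}.) Second, even granting a polygonal model, the passage to $P_{n+1}$ is asserted from ``$\R^n$ is $n$-dimensional,'' but $P_{n+1}$ has dimension roughly $(n+1)(n-1)$, not $n$, so a naive dimension count does not produce the claimed reduction. The bound $n+1$ on the number of bounces is not a soft general-position fact; it comes from an index bound. In the paper's proof this is Lemma \ref{lem:RCT}(4), which says $c_\FH(U)\in\Sigma_{n+1}(\partial U)$, i.e.\ the capacity is realized by a closed characteristic of Conley--Zehnder index $\le n+1$; that index bound is then converted into a bounce bound via Long's formula $\ind_\Morse(q_k)=\ind_\CZ(\Gamma_k)$ and the Albers--Mazzucchelli compactness results (Lemma \ref{lem:billiard}). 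Your proposal never establishes any index control, which is why the $n+1$ appears out of thin air.

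The paper's actual route is quite different from yours: it approximates $D^*V$ by RCT domains $U_\varepsilon^\pm$ obtained from the smooth Hamiltonian $H_\varepsilon(q,p)=|p|^2/2+\varepsilon h(q)$ with a penalty potential $h$ blowing up at $\partial V$, shows $c_\FH(U_k^\pm)\to c_\FH(D^*V)$ via the conformal flow of a Liouville vector field, invokes Hermann's results to pin $c_\FH(U_k^-)$ to the action of a Reeb orbit $\gamma_k$ with $\ind_\CZ(\gamma_k)\le n+1$, and then passes to the billiard limit as $\varepsilon_k\to 0$ using the Albers--Mazzucchelli convergence theory. This keeps everything smooth and Floer-theoretic until the very last step, and the $n+1$ is forced by the Conley--Zehnder index, not by a dimension count on a space of polygons. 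You also correctly flag the Palais--Smale issue on the non-smooth polygon space, but that too is sidestepped by the paper's penalty approximation, which works on smooth energy hypersurfaces until the final limit. To make your approach rigorous you would essentially have to reintroduce the index theory and the approximation scheme, at which point it becomes the paper's argument.
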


\begin{rem}
The idea of using symplectic capacities to study periodic billiard trajectory is due to Viterbo \cite{Vit2}.
See also \cite{AvOs}, in which a result similar to Proposition \ref{prop:billiard} (Theorem 2.13 in \cite{AvOs}) is proved. 
Proposition \ref{prop:billiard} is essentially the same as Theorem 13 in \cite{Ir}. 
However, our formulation of symplectic homology in this note is a bit different from that in \cite{Ir}, in which we used Viterbo's symplectic homology \cite{Vit1}.  
Hence we include a proof of Proposition \ref{prop:billiard} in Section 6, for the sake of completeness. 
\end{rem} 

Given Proposition \ref{prop:billiard}, it is natural to ask if one can compute $c_\FH(D^*V)$ by using only 
elementary (i.e. singular) homology theory. 
The following corollary of our main result gives an answer to this question. 
For any $x \in \bar{V}$, $c_x$ denotes the constant loop at $x$. 

\begin{cor}\label{cor:FH}
Let $V$ be a bounded domain with smooth boundary in $\R^n$, and $b>0$. 
Let us define 
$\iota^b: (\bar{V}, \partial V) \to \bigl( \Lambda^{<b}(\bar{V}), \Lambda^{<b}(\bar{V}) \setminus \Lambda(V) \bigr)$ by 
$\iota^b(x):= c_x$. Denote by $(\iota^b)_*$ the map on homology induced by $\iota^b$. Then, 
\begin{align*}
&c_\FH(D^*V)=  \\
&\quad \inf \{ b \mid \text{$(\iota^b)_n: H_n(\bar{V}, \partial V) \to H_n(\Lambda^{<b}(\bar{V}), \Lambda^{<b}(\bar{V}) \setminus \Lambda(V))$ vanishes} \}. 
\end{align*}
\end{cor}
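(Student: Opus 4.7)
The plan is to unwind the definition of $c_\FH$, transport everything through the isomorphism of Theorem \ref{thm:main}, and reduce the vanishing of a distinguished low-action class to the vanishing of $(\iota^b)_n$. From Section 2.4, $c_\FH(D^*V)$ is the infimum of $b>0$ for which a canonical ``fundamental'' class in $\SH_n^{[a,\epsilon)}(D^*V)$ (for any sufficiently small $\epsilon>0$) is sent to zero by the action-extension map $\SH_n^{[a,\epsilon)}(D^*V)\to \SH_n^{[a,b)}(D^*V)$. Applying Theorem \ref{thm:main} and the naturality in its commutative diagram, this becomes the infimum of $b$ for which the corresponding class in $H_n(\Lambda^{<\epsilon}(\bar V),\Lambda^{<\epsilon}(\bar V)\setminus \Lambda(V))$ dies in $H_n(\Lambda^{<b}(\bar V),\Lambda^{<b}(\bar V)\setminus \Lambda(V))$ under the map induced by inclusion.

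The key step is to identify, for $\epsilon>0$ small, the map $\iota^\epsilon:(\bar V,\partial V)\to (\Lambda^{<\epsilon}(\bar V),\Lambda^{<\epsilon}(\bar V)\setminus \Lambda(V))$ as a homotopy equivalence of pairs, and to check that the canonical low-action class corresponds under this equivalence to the fundamental class $[\bar V,\partial V]\in H_n(\bar V,\partial V;\Z_2)\cong\Z_2$ (the target being one-dimensional by Lefschetz duality, since $\bar V$ is a compact connected $n$-manifold with boundary). The homotopy equivalence of pairs would be produced by elementary deformation retractions: the linear contraction $(s,\gamma)\mapsto (1-s)\gamma(\cdot)+s\gamma(0)$ retracts $\Lambda^{<\epsilon}(\bar V)$ onto constant loops in $\bar V$ once $\epsilon$ is smaller than a convexity scale determined by $\partial V$; a compatible retraction of $\Lambda^{<\epsilon}(\bar V)\setminus\Lambda(V)$ onto $\partial V$ is obtained by composing with the nearest-point projection onto $\partial V$ in a tubular neighborhood.

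Given these identifications, $\iota^b$ factors up to homotopy of pairs as $\iota^\epsilon$ followed by the inclusion into $(\Lambda^{<b}(\bar V),\Lambda^{<b}(\bar V)\setminus\Lambda(V))$, so $(\iota^b)_n$ sends $[\bar V,\partial V]$ to precisely the image of the distinguished class in $H_n(\Lambda^{<b}(\bar V),\Lambda^{<b}(\bar V)\setminus \Lambda(V))$. Since $H_n(\bar V,\partial V;\Z_2)$ is generated by $[\bar V,\partial V]$, the map $(\iota^b)_n$ vanishes on the whole group if and only if it annihilates the fundamental class, hence if and only if the distinguished class dies at action level $b$. Taking the infimum over $b$ yields the formula claimed in Corollary \ref{cor:FH}.

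The main obstacle will be the second step: matching the distinguished symplectic-homology class that enters the definition of $c_\FH$ with the fundamental class $[\bar V,\partial V]$ via the isomorphism of Theorem \ref{thm:main}. This requires tracing the construction of that isomorphism at the smallest action window and verifying that the ``unit'' used to define $c_\FH$ corresponds to $[\bar V,\partial V]$ on the loop-space side. Everything else is either standard algebraic topology (the deformation retractions, Lefschetz duality) or a formal consequence of the naturality already built into Theorem \ref{thm:main}.
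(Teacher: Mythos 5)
Your broad outline---transport the defining vanishing condition for $c_\FH$ through the isomorphism of Theorem \ref{thm:main} and identify the relevant class with the fundamental class of $(\bar V,\partial V)$---is the same strategy the paper follows, but there are two genuine gaps in the way you propose to carry it out.

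The first gap is at the very start. The characterization of $c_\FH(U)$ as the infimum of $b$ for which the natural map $\SH_n^{[-1,\varepsilon)}(U)\to \SH_n^{[-1,b)}(U)$ vanishes is \emph{not} the definition in Section 2.4 (which is phrased in terms of $\SH_{n+1}^{(0,b)}(U)\to\Theta^b(p)$); it is a theorem of Hermann recorded as Lemma \ref{lem:RCT}(3), and it is proved only for domains with restricted contact type (RCT) smooth boundary. The domain $D^*V=V\times B^n(1)$ is not of this form: $\partial(D^*V)$ has a corner along $\partial V\times\{|p|=1\}$. So you cannot simply apply the ``action-extension map vanishes'' formulation to $D^*V$ directly. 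The paper handles this by sandwiching $D^*V$ between RCT approximants $U_k^-\subset D^*V\subset U_k^+$, establishing $\lim_k c_\FH(U_k^\pm)=c_\FH(D^*V)$ via the Liouville flow, and then using Lemma \ref{lem:RCT}(3) for $U_k^\pm$ together with two commutative diagrams to squeeze the answer. That approximation step is essential and cannot be bypassed.

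The second gap is the deformation retraction. The linear contraction $(s,\gamma)\mapsto (1-s)\gamma+s\gamma(0)$ does not preserve $\bar V$ when $V$ is not convex, no matter how short $\gamma$ is: near a concave boundary point, arbitrarily short loops in $\bar V$ have chords leaving $\bar V$. In particular, you have not shown $\iota^\varepsilon$ is a homotopy equivalence of pairs, and the argument does not work as stated for general domains. The paper avoids this altogether with a much cheaper trick: fix $p\in V$ and take $\varepsilon<2\,\dist(p,\partial V)$, so every loop in $\Lambda^{<\varepsilon}(\bar V)\setminus\Lambda(V)$ misses $p$; then the evaluation map $\gamma\mapsto\gamma(0)$ gives a map to $(\bar V,\bar V\setminus\{p\})$ whose composition with $\iota^\varepsilon$ is the standard isomorphism $H_n(\bar V,\partial V)\cong H_n(\bar V,\bar V\setminus\{p\})\cong\Z_2$, proving $(\iota^\varepsilon)_n$ is injective. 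Surjectivity then comes for free because the target is already known to be $\Z_2$ --- via Theorem \ref{thm:main} and the $\SH$ computation for the RCT approximants --- and that is a much lighter burden than a pair homotopy equivalence. If you want to keep a retraction-style argument you would need to replace the straight-line homotopy with one adapted to $\partial V$ (e.g.\ along an outward vector field or using nearest-point projection in a collar), but the evaluation-map route is both simpler and already sufficient.
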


To prove Corollary \ref{cor:FH}, we need to combine our main result Theorem \ref{thm:main} with results in \cite{Her}. 
Corollary \ref{cor:FH} is proved in Section 6. 

\subsection{Floer-Hofer capacity and inradius} 

Using Corollary \ref{cor:FH}, one can obtain a quite good estimate of $c_\FH(D^*V)$ by using the inradius of $V$. 
First let us define the notion of the inradius: 

\begin{defn}\label{defn:inrad}
Let $V$ be a domain in $\R^n$. The \textit{inradius} of $V$, which is denoted as $r(V)$, is the supremum of radii of balls in $V$. 
In other words, $r(V):= \sup_{x \in V} \dist(x, \partial V)$. 
\end{defn}

Our estimate of the Floer-Hofer capacity is the following: 

\begin{thm}\label{thm:inrad}
Let $V$ be a bounded domain with smooth boundary in $\R^n$. 
Then, there holds 
$2 r(V) \le c_\FH(D^*V) \le 2(n+1) r(V)$.  
\end{thm}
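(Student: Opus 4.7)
The plan is to use Corollary \ref{cor:FH} to translate both inequalities into statements about the constant-loop map of pairs
\[
\iota^b : (\bar V, \partial V) \longrightarrow \bigl(\Lambda^{<b}(\bar V),\, \Lambda^{<b}(\bar V) \setminus \Lambda(V)\bigr).
\]
Namely, the lower bound amounts to $(\iota^b)_n \ne 0$ for every $b < 2r(V)$, and the upper bound to $(\iota^b)_n = 0$ for every $b > 2(n+1)r(V)$.

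For the lower bound, I would fix $b < 2r(V)$ and pick $x_0 \in V$ with $\dist(x_0, \partial V) > b/2$ (which exists by the definition of inradius). Any $\gamma \in \Lambda^{<b}(\bar V)$ meeting $\partial V$ has its image contained in the Euclidean ball of radius $b/2$ about any of its points, because any two points on a loop of length less than $b$ are joined by an arc of length less than $b/2$; in particular $\dist(\gamma(0),\partial V) < b/2 < \dist(x_0,\partial V)$, so $\gamma(0) \ne x_0$. Hence evaluation at the basepoint gives a map of pairs
\[
\ev : \bigl(\Lambda^{<b}(\bar V),\, \Lambda^{<b}(\bar V) \setminus \Lambda(V)\bigr) \longrightarrow \bigl(\bar V,\, \bar V \setminus \{x_0\}\bigr),
\]
and $\ev \circ \iota^b$ is the inclusion of pairs $(\bar V,\partial V) \hookrightarrow (\bar V, \bar V\setminus\{x_0\})$. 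On $H_n$ this inclusion is the fundamental-class-at-$x_0$ map, which is nonzero since $H_n(\bar V, \bar V\setminus\{x_0\}) \cong H_n(\R^n,\R^n\setminus\{x_0\}) \cong \Z_2$ by excision; thus $(\iota^b)_n \ne 0$.

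For the upper bound, with $b$ slightly larger than $2(n+1)r(V)$, I would construct a map-of-pairs homotopy from $\iota^b$ to a map whose image lies in $\Lambda^{<b}(\bar V)\setminus \Lambda(V)$. The geometric input is: for any $x\in\bar V$ and any nearest point $\pi(x)\in\partial V$, the segment $[x,\pi(x)]$ lies in $\bar V$ (otherwise an intermediate boundary crossing would be strictly closer to $x$, contradicting minimality), so the out-and-back loop along $[x,\pi(x)]$ has length at most $2r(V)$ and touches $\partial V$. Since the nearest-point assignment is discontinuous in general, I would use the fact that $\bar V$ has covering dimension $n$ to cover it by $n+1$ open sets $U_0,\ldots,U_n$ on each of which a continuous boundary selection $\sigma_i : U_i\to \partial V$ with $[x,\sigma_i(x)]\subset\bar V$ and $|x-\sigma_i(x)| < r(V) + \epsilon$ exists---for instance, by refining a fine cover by small balls via Ostrand's theorem into $n+1$ disjoint families and letting $\sigma_i$ be locally constant on each ball of the $i$-th family. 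Taking a partition of unity $\{\phi_i\}$ subordinate to $\{U_i\}$ and basing all loops at $x$ (so that any $y\in\partial V$ automatically maps to loops touching $\partial V$), I would stage the homotopy so that over time $t \in [0,1]$ the loop at $x$ concatenates the out-and-back trips along $[x,\sigma_i(x)]$ for those $i$ with $\phi_i(x) > 0$, producing at $t=1$ a loop of length at most $2(n+1)(r(V)+\epsilon) < b$ that meets $\partial V$.

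The main obstacle is the continuity of this construction in the $W^{1,2}$-topology on $\Lambda^{<b}(\bar V)$: the $i$-th round trip must appear and disappear smoothly in $x$ as $\phi_i(x)$ crosses zero, while at $t=1$ still reaching $\partial V$ whenever $\phi_i(x) > 0$. I expect this will require a two-stage scheme---first continuously growing scaled round trips of amplitude $\phi_i(x)|x-\sigma_i(x)|$ (which collapse to the constant loop when $\phi_i(x)=0$), and then, using that $\max_i \phi_i(x) \ge 1/(n+1)$ throughout $\bar V$, promoting at least one scaled trip to a full round trip reaching $\sigma_i(x)\in\partial V$---together with a careful smoothing so that the resulting map-of-pairs homotopy is continuous and the length bound $2(n+1)r(V) + O(\epsilon) < b$ is preserved.
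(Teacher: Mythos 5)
Your lower bound argument is correct and is essentially the paper's: the evaluation map
\[
\ev: \bigl(\Lambda^{<b}(\bar V),\ \Lambda^{<b}(\bar V)\setminus\Lambda(V)\bigr) \longrightarrow \bigl(\bar V,\ \bar V\setminus\{x_0\}\bigr)
\]
(valid because a loop of length $<b$ that touches $\partial V$ cannot reach a point at distance $>b/2$ from $\partial V$) composed with $\iota^b$ gives the local fundamental class, which is nonzero. This is exactly how the paper's Lemma 6.4(2) proves the relevant injectivity, and together with Corollary 1.4 it yields $c_\FH(D^*V)\ge 2r(V)$.

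For the upper bound your overall strategy is also the one in the paper --- out-and-back loops to $\partial V$ of length roughly $2r(V)$, a cover with multiplicity $\le n+1$ coming from covering dimension $n$, and a based concatenation to produce a null-homotopy of pairs --- but the last step of your construction has a genuine gap, and it is precisely at the place you flag as ``the main obstacle.'' You propose to use a partition of unity $\{\phi_i\}$ and round trips scaled by $\phi_i(x)$, then at $t=1$ to ``promote at least one scaled trip to a full round trip'' using $\max_i\phi_i(x)\ge 1/(n+1)$. But the index $i$ realizing the maximum jumps discontinuously as $x$ varies, and a trip with $0<\phi_i(x)<1$ does \emph{not} reach $\partial V$; so as described there is no continuous choice at $t=1$ that guarantees the concatenated loop touches $\partial V$ for every $x\in\bar V$. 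A partition of unity (with $\sum_i\phi_i\equiv 1$) is the wrong tool here, since it forces most $\phi_i(x)$ to be strictly below $1$.

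The paper's fix is small but essential: instead of a partition of unity, take cutoff functions $\chi_j\in C^0(\bar V)$ with $0\le\chi_j\le 1$, $\supp\chi_j\subset W_j$, \emph{and} the requirement that the top-level sets $K_j:=\{\chi_j=1\}$ already cover $\bar V$. One then reparametrizes the homotopy parameter, setting $\tilde\sigma_j(x,t):=\sigma_j(x,\chi_j(x)\,t)$ (and $\tilde\sigma_j(x,t):=c_x$ for $x\notin W_j$). This is continuous in $(x,t)$, collapses to $c_x$ when $\chi_j(x)=0$, and --- crucially --- for every $x$ there is some $j$ with $\chi_j(x)=1$, so that $\tilde\sigma_j(x,1)$ is a genuine round trip touching $\partial V$. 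The concatenation $C(x,t)=\con(\tilde\sigma_1(x,t),\dots,\tilde\sigma_m(x,t))$ then has length $<(n+1)\rho<b$ (multiplicity $\le n+1$), satisfies $C(x,0)=c_x$, touches $\partial V$ at $t=1$ for every $x$, and is automatically outside $\Lambda(V)$ for $x\in\partial V$ since all loops are based at $x$. This gives the null-homotopy of pairs you were after, without any discontinuous ``promotion'' step.

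Two further minor remarks. First, the paper does not use nearest-point projection; it only needs, near each $p\in\bar V$, a continuously varying short path $\Gamma(x)$ from $x$ to $\partial V$ of length $<\rho/2$, which avoids the selection and non-convexity issues you would have to handle with straight segments $[x,\sigma_i(x)]$. Second, the paper obtains the multiplicity-$(n+1)$ cover by a fine triangulation of $\bar V$ and open stars of vertices rather than Ostrand's theorem; either route is standard, and your Ostrand-style version is fine in principle, but the locally constant boundary selection on each small ball still requires an argument that $[x,y_B]\subset\bar V$ for all $x$ in the ball, which is not immediate for non-convex $V$.
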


Combined with Proposition \ref{prop:billiard}, Theorem \ref{thm:inrad} implies the following result: 

\begin{cor}\label{cor:billiard}
Let $V$ be a bounded domain with smooth boundary in $\R^n$. 
There exists a periodic billiard trajectory on $V$ with at most $n+1$ bounce times and 
length between $2r(V)$ and $2(n+1)r(V)$. 
\end{cor}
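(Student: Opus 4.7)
The plan is simply to combine the two results already stated, since the corollary is essentially an immediate consequence. First I would apply Proposition \ref{prop:billiard} to the domain $V$: this produces a periodic billiard trajectory $\gamma$ on $V$ with at most $n+1$ bounce times whose length is exactly $c_\FH(D^*V)$.

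Next I would invoke Theorem \ref{thm:inrad}, which bounds the Floer--Hofer capacity by $2r(V) \le c_\FH(D^*V) \le 2(n+1)r(V)$. Substituting the equality from the previous step immediately gives
\[
2r(V) \le \text{length of $\gamma$} \le 2(n+1)r(V),
\]
and $\gamma$ still has at most $n+1$ bounce times. This is the desired conclusion.

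There is no real obstacle at this stage: all of the substantive work has been done in Proposition \ref{prop:billiard} (variational characterization of billiard trajectories in terms of $c_\FH$) and Theorem \ref{thm:inrad} (the capacity estimate via inradius, which in turn rests on the main theorem and Corollary \ref{cor:FH}). The only thing to check is that the statements match up cleanly, which they do because both assertions are about the \emph{same} quantity $c_\FH(D^*V)$ associated to the \emph{same} domain $V$. Accordingly, the write-up will be a one-line deduction rather than an extended argument.
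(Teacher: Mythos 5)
Your proposal is correct and matches the paper exactly: the paper introduces Corollary~\ref{cor:billiard} with the sentence ``Combined with Proposition~\ref{prop:billiard}, Theorem~\ref{thm:inrad} implies the following result,'' and gives no further argument, since the deduction is precisely the one-line substitution you describe.
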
 
\begin{rem} 
Let $\xi(V)$ denote the infimum of the lengths of periodic billiard trajectories on $V$. 
Corollary \ref{cor:billiard} shows that $\xi(V) \le 2(n+1)r(V)$. 
When $V$ is \textit{convex}, this result was already established as Theorem 1.3 in \cite{AvOs}. 
On the other hand, the main result in \cite{Ir} is that $\xi(V) \le \const_n r(V)$ for any domain $V$ with smooth boundary in $\R^n$. 
A weaker result $\xi(V) \le \const_n \vol(V)^{1/n}$ was obtained in \cite{Vit2}, \cite{FGS}. 
\end{rem} 

Theorem \ref{thm:inrad} is proved in Section 7. 
Here we give a short comment on the proof. 
Actually, the lower bound is immediate from Corollary \ref{cor:FH}, and the issue is to prove the upper bound. 
By Corollary \ref{cor:FH}, it is enough to show that if $b>2(n+1)r(V)$, then 
$(\iota^b)_*[(\bar{V},\partial V)]=0$. 
We will prove this by constructing a $(n+1)$-chain in $\bigl(\Lambda^{<b}(\bar{V}), \Lambda^{<b}(\bar{V}) \setminus \Lambda(V)\bigr)$ 
which bounds $(\bar{V},\partial V)$. Details are carried out in Section 7. 

\subsection{Organization of the paper}

In Section 2, we recall the definition and main properties of symplectic homology, following \cite{FH}. 
In Section 3, we recall Morse theory for Lagrangian action functionals on loop spaces, following \cite{AbM}, \cite{AbS2}. 
The goal in these sections is to fix a setup for the arguments in Sections 4, 5, 6. 

In Section 4, we prove our main result Theorem \ref{thm:main}. 
The proof consists of two steps: 

\textbf{Step1:} In Theorem \ref{thm:key}, we prove an isomorphism between Floer homology 
of a quadratic Hamiltonian on $T^*\R^n$ and Morse homology of its fiberwise Legendre transform. 

\textbf{Step2:} By taking a limit of Hamiltonians, we deduce Theorem \ref{thm:main} from Theorem \ref{thm:key}. 

Our proof of Theorem \ref{thm:key} is based on \cite{AbS1}:  
we construct an isomorphism by using so called \textit{hybrid moduli spaces}. 
However, since we will work on $T^*\R^n$, proofs of various $C^0$- estimates for (hybrid) Floer trajectories are not automatic. 
Techniques in \cite{AbS1} (in which the authors are working on cotangent bundles of \textit{compact} manifolds) do not seem to work directly in our setting. 
To prove $C^0$- estimates for Floer trajectories in our setting, 
we combine techniques in \cite{AbS1} and \cite{FH}.  
Proofs of $C^0$- estimates are carried out in Section 5. 

In Section 6, we discuss the Floer-Hofer capacity and periodic billiard trajectories. The goal of this section is to prove 
Proposition \ref{prop:billiard} and Corollary \ref{cor:FH}. 

In Section 7, we prove Theorem \ref{thm:inrad}. 
This section can be read almost independently from the other parts of the paper.  

\section{Symplectic homology}
We recall the definition and main properties of symplectic homology. 
We basically follow \cite{FH}. 

\subsection{Hamiltonian}
For $H \in C^\infty(T^*\R^n)$, its \textit{Hamiltonian vector field} $X_H$ is defined as 
$\omega_n(X_H, \cdot \, ) = -dH(\, \cdot \,)$.

For $H \in C^\infty(S^1 \times T^*\R^n)$ and $t \in S^1$, $H_t \in C^\infty(T^*\R^n)$ is defined as $H_t(q,p):=H(t,q,p)$. 
$\mca{P}(H)$ denotes the set of periodic orbits of $(X_{H_t})_{t \in S^1}$, i.e.
\[
\mca{P}(H):= \{ x \in C^\infty(S^1,T^*\R^n) \mid \dot{x}(t) = X_{H_t}(x(t)) \}.
\]
$x \in \mca{P}(H)$ is called \textit{nondegenerate} if $1$ is not an eigenvalue of 
the Poincar\'{e} map associated with $x$. 
We introduce the following conditions on $H \in C^\infty(S^1 \times T^*\R^n)$: 
\begin{enumerate}
\item[(H0):] Every element in $\mca{P}(H)$ is nondegenerate. 
\item[(H1):] There exists $a \in (0,\infty) \setminus \pi \Z$ such that 
$\sup_{t \in S^1} \| H_t - Q^a\|_{C^1(T^*\R^n)} < \infty$, where $Q^a(q,p):=a(|q|^2+|p|^2)$. 
\end{enumerate}

\begin{rem}\label{rem:HaminFH}
The class of Hamiltonians considered in this note is a bit different from that in \cite{FH}. 
To put it more precisely, (H1) is more restrictive than conditions (6), (7) in \cite{FH}. 
On the other hand, we do not need condition (8) in \cite{FH}. 
It is easy to see that our definition of symplectic homology is equivalent to that in \cite{FH}, 
see Remark \ref{rem:SHinFH}. 
\end{rem}

\begin{lem}\label{lem:orbit}
For any $H \in C^\infty(S^1 \times T\R^n)$ which satisfies (H1), $\mca{P}(H)$ is $C^0$-bounded. 
In particular, if $H$ also satisfies (H0), then $\mca{P}(H)$ is a finite set. 
\end{lem}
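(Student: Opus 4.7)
The plan is to treat $H_t$ as a $C^1$-bounded perturbation of the quadratic Hamiltonian $Q^a$, whose flow can be written down explicitly, and to exploit the non-resonance hypothesis $a \notin \pi\Z$ to control the initial condition of any $1$-periodic orbit. Identify $T^*\R^n$ with $\C^n$ via $z = q + ip$. A direct computation from $\omega_n = \sum dp_j \wedge dq_j$ gives $X_{Q^a}(z) = -2ai\, z$, so Hamilton's equation for any $x = (q,p) \in \mca{P}(H)$ becomes
\[
\dot z(t) = -2ai\, z(t) + V_t(z(t)), \qquad V_t := X_{H_t} - X_{Q^a} = X_{H_t - Q^a}.
\]
By (H1), $\|H_t - Q^a\|_{C^1(T^*\R^n)}$ is uniformly bounded in $t$, hence so is $\|V_t\|_{C^0(T^*\R^n)}$; write $C_0 := \sup_{t,z}|V_t(z)| < \infty$.

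Next, set $w(t) := e^{2ait} z(t)$, so that $\dot w(t) = e^{2ait} V_t(z(t))$ satisfies $|\dot w(t)| \le C_0$ uniformly in $x$ and $t$. Since $x$ is $1$-periodic we have $z(1) = z(0)$, and hence
\[
(e^{2ai} - 1)\, z(0) = w(1) - w(0) = \int_0^1 \dot w(t)\, dt.
\]
Because $a \in (0,\infty) \setminus \pi\Z$, the scalar $\lambda := |e^{2ai} - 1|$ is strictly positive, so this yields $|z(0)| \le C_0 / \lambda$. A standard linear-ODE estimate (or a direct application of the variation-of-constants formula to the equation $\dot w = e^{2ait} V_t(z)$, integrated from $0$ to $t$) then gives a uniform bound $\sup_{t \in S^1}|z(t)| \le R$ on every $x \in \mca{P}(H)$, where $R$ depends only on $a$ and $C_0$. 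This is the desired $C^0$-bound.

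Finally, under (H0) the evaluation map $\mca{P}(H) \to T^*\R^n$, $x \mapsto x(0)$, is an injection onto the fixed point set $\mathrm{Fix}(\phi_H^1)$ of the time-$1$ flow $\phi_H^1$ (which exists globally since $X_{H_t}$ has at most linear growth by (H1)). This set is closed, as the preimage of the diagonal under $y \mapsto (y, \phi_H^1(y))$, and bounded by the previous paragraph, hence compact. Nondegeneracy (H0) means every element of $\mathrm{Fix}(\phi_H^1)$ is isolated, and a compact set of isolated points is finite. The main obstacle in the argument is ensuring that the bound on $|\dot w|$ is truly uniform over \emph{all} $x \in \mca{P}(H)$, which is exactly the role of the global $C^1$-bound in (H1); conceptually, the essential ingredient is the non-resonance condition $a \notin \pi\Z$, which is what makes $\lambda$ nonzero and thereby converts a bound on the integral of $\dot w$ into a bound on $z(0)$.
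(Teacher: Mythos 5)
Your proof is correct and takes a genuinely different, more elementary route than the paper. The paper argues by contradiction with a rescaling (blow-up) argument: assuming $R_j := \max_t|x_j(t)| \to \infty$, it rescales $v_j := x_j/R_j$ and $h^j := H(R_j\,\cdot)/R_j^2$, extracts a $C^0$-limit $v$ of $(v_j)$, shows $v \in \mca{P}(Q^a)$ with $\max|v|=1$, and derives a contradiction from $a \notin \pi\Z$. You instead solve the problem directly in complex coordinates: writing $z = q + ip$ so that $X_{Q^a}(z) = -2ai\,z$, using the $C^1$-proximity of $H_t$ to $Q^a$ to bound the perturbing vector field $V_t := X_{H_t - Q^a}$ uniformly, and then converting the periodicity condition $z(1)=z(0)$ into the exact identity $(e^{2ai}-1)\,z(0) = \int_0^1 e^{2ait}V_t(z(t))\,dt$, whose left side is controlled by the non-resonance $a \notin \pi\Z$. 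This yields an explicit bound $|z(t)| \le C_0(1 + 1/|e^{2ai}-1|)$, whereas the paper's argument is purely qualitative. Both hinge on the same two ingredients (the global $C^1$-bound from (H1) and the non-resonance condition), but your version is shorter, quantitative, and avoids the compactness-of-the-rescaled-sequence machinery. The step deducing finiteness from (H0) is also fine; the paper leaves it implicit, and your justification (the evaluation $x \mapsto x(0)$ embeds $\mca{P}(H)$ as the fixed-point set of a globally defined time-$1$ flow, which is closed, bounded, and consists of isolated points) is the standard argument.
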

\begin{proof}
Suppose that there exists $H \in C^\infty(S^1 \times T^*\R^n)$ which satisfies (H1) and $\mca{P}(H)$ is \textit{not} $C^0$-bounded. 
Then there exists a sequence $(x_j)_{j=1,2,\ldots}$ in $\mca{P}(H)$ such that $R_j:= \max_{t \in S^1} |x_j(t)|$ goes to $\infty$ as $j \to \infty$. 
Define $v_j:S^1 \to T^*\R^n$ and $h^j \in C^\infty(S^1 \times T^*\R^n)$ by 
\[
v_j(t):= x_j(t)/R_j, \qquad
h^j(t,q,p):= H(t, R_jq, R_jp)/{R_j^2}. 
\] 
It is easy to show that $v_j \in \mca{P}(h^j)$. 
Moreover, since $\sup_{t \in S^1} \| dH_t - dQ^a \|_{C^0} < \infty$, 
\begin{equation}\label{eq:hjQ}
\lim_{j \to \infty} \sup_{t \in S^1} \| dh^j_t -dQ^a\|_{C^0} = 0. 
\end{equation}
By definition, $\max_{t \in S^1} |v_j(t)|=1$. In particular, $(v_j)_j$ is $C^0$-bounded. 
Moreover, since $\partial_t v_j = X_{h^j_t}(v_j)$, (\ref{eq:hjQ}) shows that 
$(v_j)_j$ is $C^1$-bounded. 
Hence, up to a subsequence, 
$(v_j)_j$ converges in $C^0(S^1, T^*\R^n)$. We denote the limit by $v$.

By the triangle inequality, 
\begin{align*}
&\int_0^1 |X_{h^j_t}(v_j(t)) - X_{Q^a}(v(t))| \, dt 
\le  \\
& \qquad \int_0^1 |X_{h^j_t}(v_j(t)) - X_{Q^a}(v_j(t))| \, dt + 
\int_0^1 |X_{Q^a}(v_j(t)) - X_{Q^a}(v(t))| \, dt.
\end{align*}
As $j \to \infty$, the first term on the RHS goes to $0$ by (\ref{eq:hjQ}), and the second term on the RHS goes to $0$ since $v_j$ converges to $v$ in $C^0$. 
Therefore, for any $0 \le t_0 \le 1$, 
\begin{align*}
v(t_0) - v(0)&= \lim_{j \to \infty} v_j(t_0)  -v_j(0) \\
&= \lim_{j \to \infty} \int_0^{t_0} X_{h^j_t} (v_j(t))\, dt 
= \int_0^{t_0}  X_{Q^a}(v(t))\, dt, 
\end{align*}
hence $v \in \mca{P}(Q^a)$. 
On the other hand, it is clear that 
$\max_{t \in S^1} |v(t)|=1$. 
This is a contradiction, since $a \notin \pi \Z$ implies that the only element in $\mca{P}(Q_a)$ is the constant loop at $(0,\ldots,0)$. 
\end{proof} 

$H \in C^\infty(S^1 \times T^*\R^n)$ is called \textit{admissible} if it satisfies (H0) and (H1).

\subsection{Truncated Floer homology}

Let $J=(J_t)_{t \in S^1}$ be a time dependent almost complex structure on $T^*\R^n$, such that:
\begin{enumerate}
\item[(J1):] For any $t \in S^1$, $J_t$ is compatible with $\omega_n$, i.e. 
$g_{J_t}(\xi,\eta):= \omega_n(\xi, J_t \eta)$ is a Riemannian metric on $T^*\R^n$. 
\end{enumerate}
Let $H \in C^\infty(S^1 \times T^*\R^n)$ be an admissible Hamiltonian. 
For any $x_-, x_+  \in \mca{P}(H)$, we introduce the Floer trajectory space in the usual manner: 
\begin{align*}
&\mca{M}_{H,J}(x_-, x_+):= \\
&\qquad \{u: \R \times S^1 \to T^*\R^n \mid 
\partial_s u - J_t(\partial_t u - X_{H_t}(u)) =0,\, \lim_{s \to \pm \infty} u(s) = x_{\pm} \}. 
\end{align*}
We set $\bar{\mca{M}}_{H,J}(x_-, x_+):= \mca{M}_{H,J}(x_-,x_+)/\R$, where $\R$ acts on $\mca{M}_{H,J}(x_-,x_+)$ by shift in the 
$s$-variable.

The \textit{standard complex structure} $J_\std$ on $T^*\R^n$ is defined as 
\[
J_{\std}(\partial_{p_i}):= \partial_{q_i}, \qquad
J_{\std}(\partial_{q_i}):= -\partial_{p_i}.
\]
Now we state our first $C^0$-estimate. 
It is proved in Section 5. 

\begin{lem}\label{lem:C0_1}
There exists a constant $\varepsilon>0$ which satisfies the following property: 
\begin{quote}
For any admissible Hamiltonian $H \in C^\infty(S^1 \times T^*\R^n)$ and $J = (J_t)_{t \in S^1}$ which satisfies 
(J1) and $\sup_t \| J_t - J_{\std} \|_{C^0} < \varepsilon$, 
$\mca{M}_{H,J}(x_-,x_+)$ is $C^0$-bounded for any $x_-, x_+ \in \mca{P}(H)$. 
\end{quote}
\end{lem}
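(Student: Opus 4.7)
I would adapt the maximum-principle strategy of \cite{FH} to the Floer equation on $T^*\R^n$. Given $u \in \mca{M}_{H,J}(x_-,x_+)$, Lemma \ref{lem:orbit} provides a uniform $C^0$-bound on the endpoints $x_\pm$, and the asymptotic condition $u(s,\cdot) \to x_\pm$ forces $|u|$ to remain bounded as $|s|\to\infty$. It therefore suffices to establish a subharmonicity inequality for $|u|^2$ in the region where $|u|$ is large, and then apply a maximum principle to rule out interior escape to infinity.

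The first step is a Bochner-type computation. Setting $\rho := \tfrac{1}{2}|u|^2$ and writing the Floer equation as $\partial_s u = J_t \partial_t u + A$ with $A := -J_t X_{H_t}(u) = \nabla_{g_{J_t}} H_t(u)$, the commutation of partial derivatives yields
\[
\Delta \rho = |\partial_s u|^2 + |\partial_t u|^2 + \langle u,\, \partial_s A + J_t \partial_t A - (\partial_t J_t) J_t \partial_t u \rangle.
\]
Under (H1), at large $|u|$ we have $A = 2au + O(1) + O(\varepsilon|u|)$. Substituting the Floer equation into $\partial_s A$, the dominant contribution to the inner product is $4a^2|u|^2 > 0$, with the cross term $4a\langle u, J_t \partial_t u\rangle$ not sign-definite but absorbable via AM--GM into $|\partial_s u|^2 + |\partial_t u|^2$. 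Perturbation terms coming from $H_t - Q^a$ contribute at most linearly in $|u|$; perturbation terms coming from $J_t - J_{\std}$ contribute at most $\varepsilon$ times the quadratic leading term. Choosing $\varepsilon$ small enough universally, and $|u|$ larger than a threshold $R_0$ depending on $H$ and $J$, forces $\Delta \rho \geq 0$ on $\{|u| \geq R_0\}$.

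The maximum principle then concludes the argument. On the open set $\Omega_R := \{(s,t) : |u(s,t)| > R\}$ with $R > \max(R_0, \sup_t|x_\pm(t)|)$, the function $\rho$ is subharmonic, $\rho = R^2/2$ on $\partial \Omega_R$, and $\Omega_R$ is precompact in $\R \times S^1$ by the asymptotic condition, so $\Omega_R$ must be empty. This gives the desired uniform $C^0$-bound on $u$.

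The main obstacle is ensuring that the positive leading term $4a^2|u|^2$ dominates after absorbing both the non-sign-definite mixed term $4a\langle u, J_t \partial_t u\rangle$ and the $(\partial_t J_t)$-term, the latter of which is not directly controlled by the hypothesis $\sup_t\|J_t - J_{\std}\|_{C^0}<\varepsilon$. Carrying this out with a \emph{universal} $\varepsilon$ (independent of the admissible $H$ and the compatible $J$) is precisely the technical adaptation required when moving from a cotangent bundle of a compact manifold, as in \cite{AbS1}, to the noncompact $T^*\R^n$, and is what Section 5 is devoted to proving in detail.
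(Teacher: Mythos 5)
Your proposal takes a fundamentally different route from the paper, and it contains a genuine gap that you flag but do not resolve. The paper's Section~5 does not run a maximum-principle argument. It first establishes a uniform $W^{1,2}$-bound on strips of bounded length (Proposition~\ref{prop:L2}), whose proof rests on a compactness/contradiction argument (Lemma~\ref{lem:L2}) exploiting that $Q^a$ has no nonconstant $1$-periodic orbits when $a \notin \pi\Z$; it then upgrades to $C^0$ via a cutoff, the Poincar\'e inequality, the Sobolev embedding $W^{1,3}\hookrightarrow C^0$ on $[j,j+1]\times S^1$, and the Calderon--Zygmund inequality for the \emph{constant-coefficient} operator $\partial_s - J_{\std}\partial_t$. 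The smallness of $\varepsilon$ enters only so that $(J_{\std}-J_t)\partial_t v$ is absorbable in $L^3$; no bound on $\partial_t J_t$ is ever needed.

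Your Bochner computation, by contrast, depends on quantities the hypotheses do not control, and this is fatal rather than technical. First, the $(\partial_t J_t)\partial_s u$ term is not something the paper later fixes: the lemma only bounds $\|J_t - J_{\std}\|_{C^0}$, so for a general $S^1$-family of $\omega_n$-compatible almost complex structures, $\partial_t J_t$ need not be bounded on $T^*\R^n$ at all; there is then no threshold $R_0$, depending on $H$ and $J$ as the lemma allows, past which that term is dominated. Second, even in the unperturbed model $H=Q^a$, $J=J_{\std}$, the dominant positive term $4a^2|u|^2$ you invoke does not survive. Writing the Floer equation as $\partial_s u = J_{\std}\partial_t u + 2au$, one has $\Delta u = (\partial_s + J_{\std}\partial_t)(\partial_s - J_{\std}\partial_t)u = 2a(\partial_s u + J_{\std}\partial_t u)$, whence
\[
\Delta\rho \;=\; |\partial_s u|^2 + |\partial_t u|^2 + 4a\langle u, J_{\std}\partial_t u\rangle + 4a^2|u|^2 \;=\; 2\bigl|J_{\std}\partial_t u + 2au\bigr|^2 \;=\; 2|\partial_s u|^2.
\]
The quadratic positivity is exactly consumed by the AM--GM absorption of the cross term; the only residual positivity is $2|\partial_s u|^2$, which can be arbitrarily small at points where $|u|$ is large. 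So the $O(1)$ and $O(|u|)$ errors coming from $H_t - Q^a$ (and from $\partial_t H_t$, which (H1) likewise does not bound uniformly on $T^*\R^n$) have nothing to be absorbed into, and $\Delta\rho\ge 0$ on $\{|u|\ge R_0\}$ cannot be concluded. The $L^p$-elliptic route taken in the paper sidesteps all of this, which is precisely why it is the one used.
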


We recall the definition of Floer homology. 
For any $\gamma \in C^\infty(S^1,T^*\R^n)$, we set 
\[
\mca{A}_H(\gamma):= \int_{S^1} \gamma^*\biggl(\sum_i p_i dq_i \biggr) - H(t,\gamma(t)) \, dt. 
\]
For real numbers $a<b$, the 
\textit{Floer chain complex} 
$\CF^{[a,b)}_*(H)$ is the free $\Z_2$ module generated by $\{ \gamma \in \mca{P}(H) \mid \mca{A}_H(\gamma)  \in [a,b) \}$, 
indexed by the Conley-Zehnder index $\ind_{\CZ}$. 
For the definition of the Conley-Zehnder index, see Section 1.3 in \cite{FH}. 

Suppose that $J=(J_t)_{t \in S^1}$ satisfies (J1) and each $J_t$ is sufficiently close to $J_\std$. 
Lemma \ref{lem:C0_1} shows that for generic $J$, 
$\bar{\mca{M}}_{H,J}(x_-,x_+)$ is a compact $0$-dimensional 
manifold for any $x_-, x_+  \in \mca{P}(H)$ such that $\ind_\CZ(x_-) - \ind_\CZ(x_+)=1$. 
We can thus define the Floer differential $\partial_{H,J}$ on $\CF^{[a,b)}_*(H)$ as 
\[
\partial_{H,J}\bigl([x_-] \bigr):= \sum_{\ind_{\CZ}(x_+)=\ind_{\CZ}(x_-)-1} \sharp \bar{\mca{M}}_{H,J}(x_-, x_+) \cdot [x_+].
\]
The usual gluing argument shows that $\partial_{H,J}^2 =0$. 
$\HF^{[a,b)}_*(H,J):= H_*(\CF^{[a,b)}_*(H), \partial_{H,J})$ is called \textit{truncated Floer homology}. 

\subsection{Symplectic homology} 
Suppose that we are given the following data: 
\begin{itemize}
\item Admissible Hamiltonians $H^-, H^+ \in C^\infty(S^1 \times T^*\R^n)$
\item $J^-=(J^-_t)_{t \in S^1}$, $J^+=(J^+_t)_{t \in S^1}$, which satisfy (J1). Moreover, all $J^-_t$, $J^+_t$ are sufficiently close to $J_\std$. 
\end{itemize}
We assume that $\HF^{[a,b)}_*(H^-,J^-)$, $\HF^{[a,b)}_*(H^+, J^+)$ are well-defined. 
If $H^- \le H^+$, i.e. $H^-(t,q,p) \le H^+(t,q,p)$ for any $t \in S^1$ and $(q,p) \in T^*\R^n$, one can define \textit{monotonicity homomorphism}
\[
\HF_*^{[a,b)}(H^-,J^-) \to \HF_*^{[a,b)}(H^+,J^+)
\]
in the following way. 

First we introduce the following conditions on $H \in C^\infty(\R \times S^1 \times T^*\R^n)$: 

\begin{enumerate}
\item[(HH1):] There exists $s_0>0$ such that $H(s,t,q,p)=\begin{cases} H(s_0, t,q,p) &(s \ge s_0) \\ H(-s_0, t,q,p) &( s \le -s_0) \end{cases}$.
\item[(HH2):] $\partial_s H(s,t,q,p) \ge 0$ for any $(s,t,q,p) \in \R \times S^1 \times T^*\R^n$. 
\item[(HH3):] There exists $a(s) \in C^\infty(\R)$ such that:
\begin{itemize}
\item $a'(s) \ge 0$ for any $s$. 
\item $a(s) \in \pi \Z \implies a'(s) >0$. 
\item Setting $\Delta(s,t,q,p):=H(s,t,q,p)- Q^{a(s)}(q,p)$, there holds 
\[
\sup_{(s,t)} \| \Delta_{s,t} \|_{C^1(T^*\R^n)} < \infty, \qquad
\sup_{(s,t)} \| \partial_s \Delta_{s,t} \|_{C^0(T^*\R^n)} < \infty. 
\]
\end{itemize}
\end{enumerate}

If $H$ satisfies (HH1), (HH2), (HH3) and $H^{\pm}_t = H_{\pm s_0, t}$, $H$ is called a homotopy from $H^-$ to $H^+$. 
For any $H^-$ and $H^+$ such that $H^- \le H^+$, there exists a homotopy from $H^-$ to $H^+$. 
In fact, take $\rho \in C^\infty(\R)$ such that 
\begin{align*}
&s \ge 1 \implies \rho(s)=1, \quad
s \le 0 \implies \rho(s)=0, \\
&0<s<1 \implies \rho(s) \in (0,1), \, \rho'(s)>0.
\end{align*}
Then $H(s,t,q,p):= \rho(s) H^+(t,q,p) + (1-\rho(s)) H^-(t,q,p)$ is a homotopy from $H^-$ to $H^+$. 

Next we introduce conditions on 
$J=(J_{s,t})_{(s,t) \in \R \times S^1}$, a family of almost complex structures on $T^*\R^n$ parametrized by $\R \times S^1$: 
\begin{enumerate}
\item[(JJ1):] There exists $s_1>0$ such that 
$J_{s,t}(q,p) = \begin{cases} J_{s_1,t}(q,p) &(s \ge s_1) \\ J_{-s_1,t}(q,p) &(s \le -s_1) \end{cases}$.
\item[(JJ2):] For any $(s,t) \in \R \times S^1$, $J_{s,t}$ is compatible with $\omega_n$. 
\end{enumerate}
If $J$ satisfies (JJ1), (JJ2) and $J^\pm_t= J_{\pm s_1, t}$, $J$ is called a homotopy from $J^-$ to $J^+$. 

Let $H \in C^\infty(\R \times S^1 \times T^*\R^n)$ be a homotopy from $H^-$ to $H^+$, and 
$J=(J_{s,t})_{(s,t) \in \R \times S^1}$ be a homotopy from $J^-$ to $J^+$. 
For any $x_- \in \mca{P}(H^-)$ and $x_+ \in \mca{P}(H^+)$, we define
\begin{align*}
&\mca{M}_{H,J}(x_-,x_+):= \\
&\quad \{ u: \R \times S^1 \to T^*\R^n \mid \partial_s u - J_{s,t}(\partial_t u - X_{H_{s,t}}(u))=0, \,  \lim_{s \to \pm \infty} u(s)=x_{\pm}\}.
\end{align*}
Now we state our second $C^0$-estimate. It is proved in Section 5. 

\begin{lem}\label{lem:C0_2}
There exists a constant $\varepsilon>0$ which satisfies the following property: 
\begin{quote}
If $J=(J_{s,t})_{(s,t) \in \R \times S^1}$ satisfies $\sup_{s,t} \| J_{s,t} - J_{\std} \|_{C^0} < \varepsilon$, 
$\mca{M}_{H,J}(x_-,x_+)$ is $C^0$-bounded for any $x_- \in \mca{P}(H^-)$, $x_+ \in \mca{P}(H^+)$. 
\end{quote}
\end{lem}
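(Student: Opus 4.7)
The plan is to adapt the proof of Lemma~\ref{lem:C0_1} to the $s$-dependent setting of a homotopy. That argument derives a differential inequality for a suitable function of $u$ (essentially $|u|^2$) on a Floer trajectory of an $s$-independent admissible $(H,J)$, then applies a maximum principle, using the $C^0$-boundedness of $\mca{P}(H^\pm)$ from Lemma~\ref{lem:orbit} to control the asymptotic values at $s = \pm\infty$. The main new ingredient here is the homotopy data encoded in (HH1)--(HH3).

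First I would use (HH3) to decompose $H(s,t,q,p) = Q^{a(s)}(q,p) + \Delta(s,t,q,p)$, so that $\sup_{s,t}\|\Delta_{s,t}\|_{C^1}$ and $\sup_{s,t}\|\partial_s\Delta_{s,t}\|_{C^0}$ are finite, and write $J_{s,t} = J_\std + j_{s,t}$ with $\sup_{s,t}\|j_{s,t}\|_{C^0}<\varepsilon$. Since $X_{Q^{a(s)}}$ is linear in $(q,p)$ (equal to $2a(s)J_\std$ acting on the radial vector field), a direct computation of $\Delta(|u|^2)$ using the unperturbed equation $\partial_s u - J_\std\partial_t u = 2a(s) u$ yields a schematic identity
\[
\Delta(|u|^2) + P(s)\,\partial_s(|u|^2) + R(s)\,|u|^2 = 2|\nabla u|^2 + E(s,t,u,\partial u),
\]
with $P,R$ polynomial in $a(s)$ and the error $E$ collecting $a'(s)$-terms together with the contributions of $\Delta$ and $j$. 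By (HH3) and the smallness of $\varepsilon$, $E$ is uniformly bounded and its gradient component is absorbed into $|\nabla u|^2$, yielding an elliptic inequality of the form $\Delta\phi + \alpha(s,t)\partial_s\phi + \beta(s,t)\partial_t\phi + \gamma(s,t)\phi \geq -C$ for $\phi := |u|^2$.

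Next I would run a maximum-principle argument on $[-R,R]\times S^1$. Since the sign of $\gamma$ is not automatically favourable, I would pass to an auxiliary function $\tilde\phi(s,t) := e^{-\lambda(s)}\phi(s,t)$, with $\lambda$ a primitive chosen to cancel the first-order $s$-drift and adjust the zeroth-order term. The monotonicity condition (HH2) and the transverse crossing condition $a(s)\in\pi\Z\Rightarrow a'(s)>0$ from (HH3) are precisely what make a valid choice of $\lambda$ possible, producing an inequality for $\tilde\phi$ that forbids large interior maxima. Hence $\tilde\phi$, and therefore $\phi$, is controlled on $[-R,R]\times S^1$ by its values at the ends $\{s=\pm R\}$. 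Letting $R\to\infty$, these boundary values are controlled by $\sup_t|x_\pm(t)|^2$, which is finite by Lemma~\ref{lem:orbit} applied to the admissible endpoints $H^\pm$; this gives the required uniform $C^0$-bound on $\mca{M}_{H,J}(x_-,x_+)$.

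The main technical obstacle, as in the autonomous case, is managing the first-order $s$-drift when the coefficients themselves depend on $s$; this is precisely why (HH3) demands a uniform $C^0$-bound on $\partial_s\Delta$ and transverse crossings at the resonant values $\pi\Z$. I expect the proof in Section~5 to carry out this bookkeeping carefully, along the lines of the corresponding estimate in \cite{FH}.
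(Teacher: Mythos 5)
Your proposal takes a genuinely different route from the paper, and the route you choose has a real gap. The paper does \emph{not} run a pointwise maximum principle. Instead, it uses the monotonicity of the action along the Floer trajectory (a consequence of (HH2)) to get a uniform bound on $\mca{A}_{H_s}(u(s))$ via Lemma~\ref{lem:AH}; it then establishes a $W^{1,2}$-estimate on strips $[j-1,j+2]\times S^1$ (Proposition~\ref{prop:L2}), whose heart, Lemma~\ref{lem:L2}, is a one-dimensional $L^2$-estimate proved by contradiction and rescaling (producing a nonzero $v$ with $\partial_t v = X_{Q^{a(s)}}(v)$, which is impossible off $\pi\Z$, and ruled out at $\pi\Z$ using $a'(s)>0$ together with the $\partial_s H\ge 0$ term); finally, the $W^{1,2}$-bound is bootstrapped to $C^0$ by applying the Sobolev embedding and the Calder\'on--Zygmund inequality to the cut-off functions $v_j(s,t)=\chi(s-j)u(s,t)$. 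This is also the structure of the relevant estimate in \cite{FH}; the argument there is not a pointwise maximum principle, so your attribution in the last paragraph is off.

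The maximum-principle plan itself runs into a structural obstruction that (HH2)--(HH3) cannot repair. Starting from the unperturbed equation $\partial_s u - J_\std\partial_t u = 2a(s)u$, a direct computation gives
\[
\Delta\bigl(|u|^2\bigr) = 2|\nabla u|^2 + \bigl(4a'(s)-8a(s)^2\bigr)|u|^2 + 4a(s)\,\partial_s\bigl(|u|^2\bigr),
\]
so the zeroth-order coefficient is $4a'-8a^2$. This is negative precisely where $a'\approx 0$ and $a\neq 0$, i.e.\ on the two cylindrical ends $|s|\ge s_0$, where $a'\equiv 0$ and $a\equiv a_\pm>0$. Thus the unfavourable sign is not confined to a compact transition region near crossings of $\pi\Z$, which is where you hope (HH3) will help you; it is a global feature of the ends. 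Conjugating by $e^{-\lambda(s)}$ with $\lambda'=2a$ removes the $\partial_s$-drift but only shifts the zeroth-order coefficient to $2a'-4a^2$, still $<0$ at $\pm\infty$, and moreover forces $\lambda(s)\sim 2a_\pm s$, so the boundary term $e^{-\lambda(\mp R)}|u(\mp R,\cdot)|^2$ blows up as $R\to\infty$ unless $u$ decays to $x_\pm$ faster than $e^{-2a_\pm|s|}$ --- a rate dictated by the spectral gap of the asymptotic linearized operator, which need not dominate $2a_\pm$. Any \emph{bounded} $\lambda$ has $\lambda'\to 0$ on the ends and leaves the coefficient at $-8a_\pm^2<0$. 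So no choice of $\lambda$ produces a signed elliptic inequality with controlled boundary values. This is exactly why both the paper and \cite{FH} pass through the action-controlled $L^2$-estimate rather than a pointwise differential inequality for $|u|^2$.
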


Lemma \ref{lem:C0_2} shows that, if $J$ is generic and all $J_{s,t}$ are sufficiently close to $J_\std$, 
$\mca{M}_{H,J}(x_-,x_+)$ is a compact $0$-dimensional manifold for any $x_- \in \mca{P}(H^-)$, $x_+ \in \mca{P}(H^+)$ such that
$\ind_{\CZ}(x^-) = \ind_{\CZ}(x^+)$. 
We define $\Phi: \CF^{[a,b)}_*(H^-,J^-) \to \CF^{[a,b)}_*(H^+,J^+)$ by 
\[
\Phi\bigl([x^-]\bigr):= \sum_{\ind_{\CZ}(x^+) = \ind_\CZ(x^-)} \sharp \mca{M}_{H,J}(x^-,x^+) \cdot [x^+].
\]
The usual gluing argument shows that $\Phi$ is a chain map. 
The monotonicity homomorphism 
\[
\Phi_*:  \HF^{[a,b)}_*(H^-,J^-) \to \HF^{[a,b)}_*(H^+,J^+) 
\]
is the homomorphism on homology induced by $\Phi$. 
One can show that $\Phi_*$ does not depend on the choices of $H$ and $J$, 
see Section 4.3 in \cite{FH}. 

\begin{rem}
Let $H$ be an admissible Hamiltonian, and 
$J^0, J^1$ be $S^1$- dependent almost complex structures such that 
$\HF^{[a,b)}_*(H,J^0)$, $\HF^{[a,b)}_*(H,J^1)$ are well-defined. 
Then, one can show that the monotonicity homomorphism 
$\HF^{[a,b)}_*(H,J^0) \to \HF^{[a,b)}_*(H,J^1)$ is an isomorphism. 
Hence $\HF_*^{[a,b)}(H,J)$ does not depend on $J$, and we denote it by $\HF_*^{[a,b)}(H)$. 
Moreover, for two admissible Hamiltonians $H^-, H^+$ satisfying 
$H^- \le H^+$, the monotonicity homomorphism 
$\HF^{[a,b)}_*(H^-) \to \HF^{[a,b)}_*(H^+)$ is well-defined. 
\end{rem}

We define symplectic homology. 
Let $U$ be a bounded open set in $T^*\R^n$. 
Let $\mca{H}_U$ denote the set consisting of admissible Hamiltonians $H$ such that 
$H|_{S^1 \times \bar{U}} < 0$. 
$\mca{H}_U$ is a directed set with relation
\[
H^- \le H^+ \iff H^-(t,q,p) \le H^+(t,q,p) \qquad \bigl( \forall (t,q,p) \in S^1 \times T^*\R^n). 
\]
Then, for any $-\infty < a < b < \infty$, we define symplectic homology $\SH_*^{[a,b)}(U)$ by   
\[
\SH^{[a,b)}_*(U):= \varinjlim_{H \in \mca{H}_U} \HF^{[a,b)}_*(H). 
\]

If $U \subset V$, then obviously $\mca{H}_V \subset \mca{H}_U$. Hence there exists a 
natural homomorphism 
\[
\SH^{[a,b)}_*(V) \to \SH^{[a,b)}_*(U). 
\]
Moreover, for any $a^\pm, b^\pm \in \R$ such that $a^- \le a^+$, $b^- \le b^+$, $a^- < b^-$, $a^+ < b^+$, there exists a natural homomorphism 
$\SH^{[a^-,b^-)}_*(U) \to \SH^{[a^+,b^+)}_*(U)$. 

\begin{rem}\label{rem:SHinFH}
As noted in Remark \ref{rem:HaminFH}, the class of Hamiltonians considered here is different from that in \cite{FH}. 
However, our definition of symplectic homology given above is equivalent to the definition in \cite{FH} (see Section 1.6 in \cite{FH}). 
A key fact is that compact perturbations of quadratic Hamiltonians are admissible both in our sense and sense in \cite{FH}. 
\end{rem}

\subsection{Floer-Hofer capacity} 

Finally, we define the Floer-Hofer capacity, which is originally due to \cite{FHW}. 
For any bounded open set $U$ and $b>0$, we define 
\[
\SH_*^{(0,b)}(U) := \varprojlim_{ \varepsilon \to +0} \SH_*^{[\varepsilon,b)}(U). 
\]
When $U \subset V$, there exists a natural homomorphism $\SH_*^{(0,b)}(V) \to \SH_*^{(0,b)}(U)$. 
For any $p \in T^*\R^n$, we define 
\[
\Theta^b(p):= \varinjlim_{\varepsilon \to +0 } \SH_{n+1}^{(0,b)}(B^{2n}(p:\varepsilon))
\]
where $B^{2n}(p:\varepsilon)$ denotes the open ball in $T^*\R^n$ with center $p$ and radius $\varepsilon$. 
It is known that $\Theta^b(p) \cong \Z_2$, see pp. 603--604 in \cite{FHW}. 

Let $U$ be a bounded domain (hence \textit{connected}) in $T^*\R^n$. 
Taking $p \in U$ arbitrarily, we define the Floer-Hofer capacity of $U$ as 
\[
c_{\FH}(U):= \inf \{ b \mid \text{$\SH_{n+1}^{(0,b)}(U) \to \Theta^b(p) \cong \Z_2$ is onto} \}. 
\]
It's known that 
the above definition does not depend on the choice of $p$. 
See pp.604 in \cite{FHW}. 

\section{Loop space homology} 

In this section, we recall Morse theory on loop spaces for Lagrangian action functionals. 
We mainly follow \cite{AbM}, \cite{AbS2}.

\subsection{Lagrangian action functional}
Recall that we used the notation $\Lambda(\R^n):= W^{1,2}(S^1,\R^n)$. 
Given $L \in C^\infty(S^1 \times T\R^n)$, we consider the action functional 
\[
\mca{S}_L:\  \Lambda(\R^n) \to \R; \quad \gamma \mapsto \int_{S^1} L(t,\gamma(t), \dot{\gamma}(t))\, dt.
\]
We introduce the following conditions on $L$: 
\begin{enumerate}
\item[(L1):]
There exists $a \in (0,\infty) \setminus \pi \Z$ such that 
\[
\sup_{t \in S^1}\bigg\| L(t,q,v) - \biggl( \frac{|v|^2}{4a} - a|q|^2 \biggr) \bigg\|_{C^2(T\R^n)} < \infty.
\] 
\item[(L2):]
There exists a constant $c>0$ such that 
$\partial_v^2 L(t,q,v) \ge c$ 
for any $(t,q,v) \in S^1 \times T\R^n$. 
\end{enumerate}
Notice that (L1) implies the following estimates: 
\begin{align*}
\text{(L1)'}:\quad &| D^2L(t,q,v)| \le \const, \\
&|\partial_qL(t,q,v)| \le \const (1+|q|), \quad |\partial_vL(t,q,v)| \le \const (1+|v|), \\
&|L(t,q,v)| \le \const (1+|q|^2+|v|^2).
\end{align*}

\begin{lem}\label{lem:dSL}
If $L$ satisfies (L1) and (L2), the following holds. 
\begin{enumerate}
\item 
$\mca{S}_L: \Lambda(\R^n) \to \R$ is a Fr\'{e}ch\'{e}t $C^1$ function. 
Its differential $d\mca{S}_L$ is given by 
\[
d\mca{S}_L(\gamma)(\xi)= \int_{S^1} \partial_q L(t,\gamma, \dot{\gamma}) \xi(t) + \partial_v L(t,\gamma, \dot{\gamma}) \dot{\xi}(t) \, dt. 
\]
Moreover, $d\mca{S}_L$ is Gataux differentiable. We denote the differential by $d^2\mca{S}_L$. 
\item
$\gamma \in \Lambda(\R^n)$ satisfies $d\mca{S}_L(\gamma)=0$ if and only if 
$\gamma \in C^\infty(S^1,\R^n)$ and 
\[
\partial_q L(t,\gamma, \dot{\gamma}) - \frac{d}{dt} \bigl( \partial_v L(t,\gamma, \dot{\gamma}) \bigr) = 0.
\]
\end{enumerate}
\end{lem}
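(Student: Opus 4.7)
The plan is to handle (1) by computing the Gateaux differential directly from the growth estimates (L1)$'$ and then upgrading to Fr\'echet $C^1$ regularity via a continuity argument, and then deducing (2) by a standard DuBois--Reymond argument combined with a bootstrap that uses (L2) to solve for $\dot\gamma$.

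For the first part, I would begin by verifying that $\mca{S}_L$ is finite-valued on $\Lambda(\R^n)$. This is immediate from the bound $|L(t,q,v)| \le \const(1+|q|^2+|v|^2)$ in (L1)$'$ together with the Sobolev embedding $W^{1,2}(S^1) \hookrightarrow C^0(S^1)$, so the integrand $L(t,\gamma,\dot\gamma)$ lies in $L^1(S^1)$. Next, for $\gamma,\xi\in\Lambda(\R^n)$ and small $s$, I would apply a first-order Taylor expansion of $L$ in the $(q,v)$ variables to get
\[
L(t,\gamma+s\xi,\dot\gamma+s\dot\xi)-L(t,\gamma,\dot\gamma)-s\bigl(\partial_qL\cdot\xi+\partial_vL\cdot\dot\xi\bigr) = O\bigl(s^2 (|\xi(t)|^2+|\dot\xi(t)|^2)\bigr),
\]
uniformly in $t$, where the $O$-constant depends on $\|\gamma\|_{W^{1,2}}$ and on the uniform bound $|D^2L|\le\const$ from (L1)$'$. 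Integrating over $S^1$, dividing by $s$ and letting $s\to 0$ gives the claimed formula for the Gateaux derivative. The linear functional appearing on the right-hand side is bounded on $\Lambda(\R^n)$ because $|\partial_qL|\le\const(1+|q|)$ and $|\partial_vL|\le\const(1+|v|)$ together with Sobolev embedding make both $\partial_qL(t,\gamma,\dot\gamma)$ and $\partial_vL(t,\gamma,\dot\gamma)$ lie in $L^2(S^1)$.

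To upgrade to Fr\'echet $C^1$, I would show that $d\mca{S}_L \colon \Lambda(\R^n) \to \Lambda(\R^n)^*$ is continuous. If $\gamma_k\to\gamma$ in $W^{1,2}$, then $\gamma_k\to\gamma$ uniformly and $\dot\gamma_k\to\dot\gamma$ in $L^2$, so after passing to a subsequence both converge pointwise a.e. By continuity of $\partial_qL,\partial_vL$ and the dominating functions supplied by (L1)$'$, one applies the dominated convergence theorem to conclude convergence in $L^2(S^1)$, which gives operator-norm convergence of the differentials; a standard subsequence argument removes the passage to a subsequence. The Gateaux differentiability of $d\mca{S}_L$ with differential obtained by differentiating under the integral sign follows by the same Taylor-expansion scheme applied one order higher, using the uniform bound on $D^2L$.

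For part (2), assume $d\mca{S}_L(\gamma)=0$. Setting $f(t):=\partial_qL(t,\gamma(t),\dot\gamma(t))$ and $p(t):=\partial_vL(t,\gamma(t),\dot\gamma(t))$, both lie in $L^2(S^1)$. Testing against $\xi\in\Lambda(\R^n)$ with $\xi(t)=\int_0^t\eta(s)\,ds-t\int_0^1\eta$ for arbitrary $\eta\in L^2(S^1)$ shows by the fundamental lemma of calculus of variations that $p(t)-\int_0^tf(s)\,ds$ is a.e.\ constant, i.e.\ $p\in W^{1,2}(S^1)$ with $\dot p=f$. By (L2), for each $(t,q)$ the map $v\mapsto\partial_vL(t,q,v)$ is a $C^\infty$ diffeomorphism of $\R^n$, so the implicit function theorem produces a $C^\infty$ map $\Psi$ with $\dot\gamma(t)=\Psi(t,\gamma(t),p(t))$ a.e.\ Since $\gamma$ and $p$ are continuous, $\dot\gamma$ has a continuous representative, hence $\gamma\in C^1$; iterating the identity $\dot\gamma=\Psi(t,\gamma,p)$ with $\dot p=\partial_qL(t,\gamma,\dot\gamma)$ bootstraps $\gamma\in C^\infty$, and rewriting $\dot p=f$ in the original variables recovers the Euler--Lagrange equation. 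The main delicacy in the whole argument is the continuity of $d\mca{S}_L$ in operator norm, since $\partial_qL$ and $\partial_vL$ are only affinely bounded, so integrability of the dominating function in the convergence step must be checked carefully using the $W^{1,2}$-bound on a convergent sequence together with the $C^0$-bound on $\gamma_k$.
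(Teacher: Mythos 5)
Your proof is correct and follows what I believe to be the same route as the paper, which simply cites \cite{AbS2}, Proposition 3.1, as the source of the argument after noting that (L1)$'$ and (L2) put one in the right setting. You have essentially reconstructed that standard argument in full: Taylor expansion to get the Gateaux derivative and (via the uniform bound on $D^2L$) the Fr\'echet remainder estimate, continuity of the differential, and then the DuBois--Reymond lemma together with the Legendre inversion supplied by (L2) for the bootstrap in part (2).

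One small remark on the step you flag as delicate: you do not need dominated convergence with carefully extracted subsequences to prove continuity of $d\mca{S}_L$. Since (L1)$'$ gives $|D^2L| \le \const$ \emph{uniformly}, the maps $(q,v)\mapsto\partial_qL(t,q,v)$ and $(q,v)\mapsto\partial_vL(t,q,v)$ are globally Lipschitz with a $t$-independent constant, so $\gamma_k\to\gamma$ in $W^{1,2}$ immediately yields $\partial_qL(\cdot,\gamma_k,\dot\gamma_k)\to\partial_qL(\cdot,\gamma,\dot\gamma)$ and $\partial_vL(\cdot,\gamma_k,\dot\gamma_k)\to\partial_vL(\cdot,\gamma,\dot\gamma)$ in $L^2(S^1)$, which is exactly what is needed. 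Also, the $O$-constant in your Taylor estimate depends only on $\sup|D^2L|$, not on $\|\gamma\|_{W^{1,2}}$, and the same Taylor estimate with $s=1$ already proves Fr\'echet differentiability directly (the remainder is $\le\const\|\xi\|^2_{W^{1,2}} = o(\|\xi\|_{W^{1,2}})$), so the ``upgrade'' step can be shortened.
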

\begin{proof}
Using (L1)' and (L2), the proof is the same as Proposition 3.1 in \cite{AbS2}.  
\end{proof}

Let us set $\mca{P}(L):= \{ \gamma \in \Lambda(\R^n) \mid d\mca{S}_L(\gamma)=0 \}$. 
$\gamma \in \mca{P}(L)$ is called \textit{nondegenerate} if 
$d^2\mca{S}_L(\gamma)$ is nondegenerate as a symmetric bilinear form on $T_{\gamma}\Lambda(\R^n) = W^{1,2}(S^1,\R^n)$. 

For each $\gamma \in \Lambda(\R^n)$, $D\mca{S}_L(\gamma) \in T_{\gamma}\Lambda(\R^n)=W^{1,2}(S^1,\R^n)$ is defined so that 
\[
\langle D\mca{S}_L(\gamma), \xi \rangle_{W^{1,2}} = d\mca{S}_L(\gamma)(\xi) \qquad \bigl(\forall \xi \in W^{1,2}(S^1,\R^n) \bigr).
\]
We show that the pair $(\mca{S}_L, D \mca{S}_L)$ satisfies the Palais-Smale (PS) condition. 
First let us recall what the PS condition is:

\begin{defn}
Let $M$ be a Hilbert manifold, $f:M \to \R$ be a $C^1$ function, and $X$ be a continuous vector field on $M$. 
A sequence $(p_k)_k$ on $M$ is called a \textit{Palais-Smale (PS) sequence}, if $(f(p_k))_k$ is bounded, 
and $\lim_{k \to \infty} df(X(p_k))=0$. 
The pair $(f,X)$ satisfies the \textit{PS-condition}, if any PS sequence contains a convergent subsequence.
\end{defn}

\begin{lem}\label{lem:PS}
Suppose that $L \in C^\infty(S^1 \times T\R^n)$ satisfies (L1).
Let $(\gamma_k)_k$ be a sequence on $\Lambda(\R^n)$ such that both 
$\mca{S}_L(\gamma_k)$ and $\| D \mca{S}_L(\gamma_k) \|_{W^{1,2}}$ are bounded.  
Then, $(\gamma_k)_k$ is $C^0$-bounded. 
\end{lem}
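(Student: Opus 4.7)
The plan is to reduce to the quadratic model $L_0(q,v):=\frac{|v|^2}{4a}-a|q|^2$ from (L1), exploit the fact that the linear operator $D\mca{S}_{L_0}:W^{1,2}(S^1,\R^n)\to W^{1,2}(S^1,\R^n)$ is bounded below (this is where the non-resonance condition $a\notin\pi\Z$ enters), and finally deduce the $C^0$-bound from the one-dimensional Sobolev embedding $W^{1,2}(S^1,\R^n)\hookrightarrow C^0(S^1,\R^n)$.

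First, setting $\tilde L:=L-L_0$, condition (L1) gives $\sup_t\|\tilde L(t,\cdot)\|_{C^2}<\infty$, so in particular $|\partial_q\tilde L|,|\partial_v\tilde L|\le\const$ pointwise on $S^1\times T\R^n$. I would then show that the nonlinear map $\gamma\mapsto D\mca{S}_{\tilde L}(\gamma)$ from $\Lambda(\R^n)$ to $W^{1,2}(S^1,\R^n)$ is \emph{uniformly} bounded: using the integral formula for $d\mca{S}_{\tilde L}$ in Lemma~\ref{lem:dSL}(1) together with Cauchy--Schwarz one gets
$$|d\mca{S}_{\tilde L}(\gamma)(\xi)|\le\const\int_{S^1}\bigl(|\xi|+|\dot\xi|\bigr)\,dt\le\const\,\|\xi\|_{W^{1,2}}$$
with constants independent of $\gamma$. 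Since $D\mca{S}_L=D\mca{S}_{L_0}+D\mca{S}_{\tilde L}$, the assumed bound on $\|D\mca{S}_L(\gamma_k)\|_{W^{1,2}}$ then converts into a uniform bound on $\|D\mca{S}_{L_0}(\gamma_k)\|_{W^{1,2}}$.

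Next, I would analyze the quadratic piece by Fourier series on $S^1=\R/\Z$. Writing $\gamma=\sum_{j\in\Z}c_j\,e^{2\pi ijt}$, a short calculation from the defining identity of the Riesz gradient gives
$$D\mca{S}_{L_0}(\gamma)=\sum_{j\in\Z}\mu_j\,c_j\,e^{2\pi ijt},\qquad \mu_j:=\frac{2(\pi^2j^2-a^2)}{a(1+4\pi^2j^2)}.$$
The hypothesis $a\notin\pi\Z$ forces $\mu_j\ne 0$ for every $j\in\Z$, and $\mu_j\to 1/(2a)$ as $|j|\to\infty$, so $c:=\inf_{j\in\Z}|\mu_j|>0$. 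This yields the crucial lower bound $\|D\mca{S}_{L_0}(\gamma)\|_{W^{1,2}}\ge c\,\|\gamma\|_{W^{1,2}}$. Combined with the previous step, $\|\gamma_k\|_{W^{1,2}}$ is uniformly bounded, and the one-dimensional Sobolev embedding finishes the proof.

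The main obstacle is the uniform lower bound $\inf_j|\mu_j|>0$ in the Fourier calculation, which is precisely why the non-resonance condition $a\notin\pi\Z$ was placed into (L1): if $a$ were a positive integer multiple of $\pi$, some $\mu_j$ would vanish, $D\mca{S}_{L_0}$ would cease to be bounded below, and the entire strategy would collapse (reflecting the extra $1$-periodic orbits of the harmonic oscillator at resonance). I remark that the action bound on $\mca{S}_L(\gamma_k)$ actually plays no role in the above argument; only the gradient bound is used.
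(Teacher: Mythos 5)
Your proof is correct, and it takes a genuinely different route from the paper. The paper argues by contradiction: if $m_k:=\max_t|\gamma_k(t)|\to\infty$, rescale by setting $\delta_k:=\gamma_k/m_k$ and $l_k(t,q,v):=L(t,m_kq,m_kv)/m_k^2$, show $(\delta_k)_k$ is $W^{1,2}$-bounded (this is where the bound on $\mca{S}_L(\gamma_k)$ is used), pass to a weak subsequential limit $\delta$, and show $\delta$ is a critical point of $\mca{S}_{l}$ with $l(t,q,v)=|v|^2/4a-a|q|^2$; since $a\notin\pi\Z$ forces $\delta\equiv0$ while $\max|\delta|=1$, a contradiction results. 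You instead split $L=L_0+\tilde L$, observe that (L1) gives uniform $C^1$-bounds on $\tilde L$ so that $D\mca{S}_{\tilde L}$ is uniformly bounded on $\Lambda(\R^n)$, and then invoke coercivity of the \emph{linear} operator $D\mca{S}_{L_0}$ via its Fourier multiplier $\mu_j=\frac{2(\pi^2j^2-a^2)}{a(1+4\pi^2j^2)}$, whose infimum is positive exactly because $a\notin\pi\Z$. Your Fourier computation checks out, and the concluding Sobolev embedding $W^{1,2}(S^1,\R^n)\hookrightarrow C^0$ is standard.

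The trade-offs are worth noting. Your argument is quantitative and strictly sharper: it yields a uniform $W^{1,2}$-bound (not just $C^0$), and, as you correctly observe, it does not use the hypothesis that $\mca{S}_L(\gamma_k)$ is bounded --- that hypothesis enters the paper's proof only to control $\|\dot\delta_k\|_{L^2}$ after rescaling. On the other hand, the paper's blow-up argument is softer and matches the template the paper uses repeatedly (compare Lemma~\ref{lem:orbit} and Lemma~\ref{lem:L2}), and it would survive if the model Lagrangian $L_0$ were replaced by something merely asymptotically quadratic, where an exact Fourier diagonalization of $D\mca{S}_{L_0}$ is unavailable. One small point: your step requires $\lvert\partial_q\tilde L\rvert,\lvert\partial_v\tilde L\rvert\le\const$ uniformly, which follows from the full $C^2$-bound in (L1) itself rather than from the weakened consequence (L1)' listed in the paper (where $\lvert\partial_qL\rvert\le\const(1+|q|)$ alone would not suffice); you are using (L1) correctly, but it is worth being explicit that the $C^1$-part of the $C^2$-norm is what is doing the work here.
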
 
\begin{proof}
Suppose that there exists a sequence $(\gamma_k)_k$ such that 
both $\mca{S}_L(\gamma_k)$, $\| D\mca{S}_L(\gamma_k) \|_{W^{1,2}}$ are bounded, and 
$m_k:= \max_{t \in S^1} |\gamma_k(t)|$ goes to $\infty$ as $k \to \infty$. 
We define $\delta_k \in \Lambda(\R^n)$ and $l_k \in C^\infty(S^1 \times T\R^n)$ by 
\[
\delta_k(t):= \gamma_k(t)/m_k, \qquad
l_k(t,q,p):= L(t,m_kq, m_kp)/{m_k}^2. 
\]

We show that $(\delta_k)_k$ is $W^{1,2}$-bounded.
Since $(\delta_k)_k$ is obviously $C^0$-bounded, it is enough to show that 
$(\dot{\delta_k})_k$ is $L^2$-bounded. 
First notice that
\[
\lim_{k \to \infty} \mca{S}_{l_k}(\delta_k) = \lim_{k \to \infty} \frac{\mca{S}_L(\gamma_k)}{{m_k}^2} =0. 
\]
On the other hand, since $L$ satisfies (L1), 
\[
\lim_{k \to \infty} 
\mca{S}_{l_k}(\delta_k)  - \biggl( \int_{S^1}  \frac{|\dot{\delta_k}|^2}{4a} - a |\delta_k|^2 \, dt \biggr) = 0. 
\]
Thus $(\dot{\delta_k})_k$ is $L^2$-bounded. 

By taking a subsequence of $(\delta_k)_k$, we may assume that 
there exists 
$\delta \in \Lambda(\R^n)$ such that 
$\lim_{k \to \infty} \|\delta_k - \delta \|_{C^0}=0$, and 
$\dot{\delta_k} \to \dot{\delta} \,(k \to \infty)$ weakly in $L^2$. 

We prove that $d \mca{S}_l(\delta)=0$, where
$l(t,q,v):=|v|^2/4a -a |q|^2$. This means that $\delta \in C^\infty(S^1,\R^n)$ and 
$\ddot{\delta}(t) + 4a^2 \delta(t) \equiv 0$. 
Since $a \notin \pi \Z$, this means that $\delta(t) \equiv 0$.
However, since $\max_{t \in S^1} |\delta(t)| = \lim_{k \to \infty} \max_{t \in S^1} |\delta_k(t)| =1$, 
this is a contradiction.

To prove $d\mca{S}_l(\delta)=0$, first notice that 
\[
\lim_{k \to \infty}  \| D\mca{S}_{l_k}(\delta_k) \|_{W^{1,2}} = \lim_{k \to \infty} \frac{\| D\mca{S}_L(\gamma_k) \|_{W^{1,2}}}{m_k} =0.
\]
Hence it is enough to show that for any $\xi \in C^\infty(S^1, \R^n)$ there holds 
\[
\lim_{k \to \infty} \bigl( d\mca{S}_l(\delta) - d \mca{S}_l(\delta_k) \bigr) (\xi) =0, \qquad 
\lim_{k \to \infty} \bigl( d\mca{S}_l(\delta_k) - d\mca{S}_{l_k}(\delta_k) \bigr) (\xi) =0. 
\]
To check the first claim, notice the following equation:
\[
\bigl( d\mca{S}_l(\delta) - d\mca{S}_l(\delta_k) \bigr) (\xi) 
=\int_{S^1} \frac{\dot{\delta}(t) - \dot{\delta_k}(t)}{2a}  \cdot \dot{\xi}(t) - 2a \bigl( \delta(t) - \delta_k(t) \bigr) \cdot \xi(t) \, dt. 
\]
Then, since $\dot{\delta_k}$ converges to $\dot{\delta}$ weakly in $L^2$, the RHS goes to $0$ as $k \to \infty$. 
The second claim follows from 
$\lim_{k \to \infty} \| l-l_k \|_{C^1}=0$. 
\end{proof}

\begin{cor}\label{cor:PS}
Suppose that $L \in C^\infty(S^1 \times T\R^n)$ satisfies (L1) and (L2). 
Then, the pair $(\mca{S}_L, D\mca{S}_L)$ satisfies the PS-condition on $\Lambda(\R^n)$.
\end{cor}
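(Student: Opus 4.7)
The plan is to verify the PS-condition directly. Let $(\gamma_k)_k \subset \Lambda(\R^n)$ be a PS sequence for $(\mca{S}_L, D\mca{S}_L)$. By definition $\mca{S}_L(\gamma_k)$ is bounded and $d\mca{S}_L(\gamma_k)(D\mca{S}_L(\gamma_k)) = \|D\mca{S}_L(\gamma_k)\|_{W^{1,2}}^2 \to 0$, so $D\mca{S}_L(\gamma_k) \to 0$ in $W^{1,2}$. In particular $(\gamma_k)_k$ satisfies the hypotheses of Lemma \ref{lem:PS}, hence is $C^0$-bounded. Combining the $C^0$-bound with the action bound and (L1) (exactly as at the start of the proof of Lemma \ref{lem:PS}) yields an $L^2$-bound on $\dot\gamma_k$, so $(\gamma_k)_k$ is $W^{1,2}$-bounded. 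Passing to a subsequence and using the compact embedding $W^{1,2}(S^1,\R^n) \hookrightarrow C^0(S^1,\R^n)$, there exists $\gamma \in \Lambda(\R^n)$ with $\gamma_k \rightharpoonup \gamma$ weakly in $W^{1,2}$ and $\gamma_k \to \gamma$ strongly in $C^0$.

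The main step is to upgrade this to strong $W^{1,2}$-convergence, for which I would exploit the uniform convexity (L2). That hypothesis implies
\[
\int_{S^1} \bigl( \partial_v L(t, \gamma_k, \dot\gamma_k) - \partial_v L(t, \gamma_k, \dot\gamma) \bigr) \cdot (\dot\gamma_k - \dot\gamma) \, dt \ge c \|\dot\gamma_k - \dot\gamma\|_{L^2}^2,
\]
so it suffices to show that the left-hand side tends to $0$. I would treat the two pieces separately.

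For the $\partial_v L(t, \gamma_k, \dot\gamma_k)$ piece, Lemma \ref{lem:dSL}(1) with test function $\xi = \gamma_k - \gamma$ gives
\[
\int_{S^1} \partial_v L(t, \gamma_k, \dot\gamma_k) \cdot (\dot\gamma_k - \dot\gamma) \, dt = d\mca{S}_L(\gamma_k)(\gamma_k - \gamma) - \int_{S^1} \partial_q L(t, \gamma_k, \dot\gamma_k) \cdot (\gamma_k - \gamma) \, dt.
\]
The first term on the right is bounded by $\|D\mca{S}_L(\gamma_k)\|_{W^{1,2}} \|\gamma_k - \gamma\|_{W^{1,2}}$ and vanishes since the first factor goes to $0$ and the second is bounded; the second term is controlled by the uniform $L^1$-bound on $\partial_q L(t, \gamma_k, \dot\gamma_k)$ (from (L1)' combined with the $C^0$-bound on $\gamma_k$) times $\|\gamma_k - \gamma\|_{C^0} \to 0$. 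For the $\partial_v L(t, \gamma_k, \dot\gamma)$ piece, the bound $|\partial_v L(t, q, v)| \le \const(1+|v|)$ from (L1)' together with dominated convergence gives $\partial_v L(t, \gamma_k, \dot\gamma) \to \partial_v L(t, \gamma, \dot\gamma)$ strongly in $L^2$, while $\dot\gamma_k - \dot\gamma \rightharpoonup 0$ weakly in $L^2$, so the integral vanishes in the limit. Combining these two estimates yields $\|\dot\gamma_k - \dot\gamma\|_{L^2} \to 0$, which together with $\gamma_k \to \gamma$ in $C^0$ gives strong convergence in $W^{1,2}$.

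The expected obstacle is the bookkeeping for the nonlinear term $\partial_v L(t, \gamma_k, \dot\gamma_k) \cdot (\dot\gamma_k - \dot\gamma)$: the strategy of dualizing against $\xi = \gamma_k - \gamma$ via Lemma \ref{lem:dSL}(1) is exactly what allows both the PS-hypothesis (killing the $D\mca{S}_L$ term) and the compact $C^0$-convergence (killing the $\partial_q L$ term) to enter simultaneously; the convexity (L2) then converts what remains into an $L^2$-norm of $\dot\gamma_k - \dot\gamma$. The other ingredients -- the compact Sobolev embedding and the pointwise growth bounds in (L1)' -- are routine.
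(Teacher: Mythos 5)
Your argument is correct, and the first step---applying Lemma \ref{lem:PS} to extract a $C^0$-bound from the PS-hypothesis---is exactly what the paper does. Where the paper then simply cites Proposition 3.3 of \cite{AbS2} (a $W^{1,2}$-precompactness statement for Tonelli-type action functionals along $C^0$-bounded PS-sequences), you instead give a self-contained proof of that step: deduce the $W^{1,2}$-bound from (L1) and the action bound, pass to a weakly $W^{1,2}$- and strongly $C^0$-convergent subsequence, then upgrade to strong $W^{1,2}$-convergence by pairing $d\mca{S}_L(\gamma_k)$ against $\xi = \gamma_k - \gamma$ and converting via the uniform convexity (L2). This duality-plus-monotonicity mechanism is precisely what the cited proposition encapsulates, so at the level of ideas the two proofs coincide; what your version buys is independence from the external reference, at the cost of spelling out the bookkeeping (the split into the two $\partial_v L$ terms, the weak/strong pairing, dominated convergence in the $v$-slot). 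All of that bookkeeping checks out: in particular the growth bounds from (L1)' do dominate the relevant integrands, and the estimate $|d\mca{S}_L(\gamma_k)(\gamma_k-\gamma)| \le \|D\mca{S}_L(\gamma_k)\|_{W^{1,2}}\|\gamma_k-\gamma\|_{W^{1,2}} \to 0$ correctly uses the PS-hypothesis together with the $W^{1,2}$-bound.
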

\begin{proof}
Suppose that $(\gamma_k)_k$ is a PS-sequence with respect to $(\mca{S}_L, D\mca{S}_L)$. 
Then, Lemma \ref{lem:PS} shows that $(\gamma_k)_k$ is $C^0$- bounded. 
Then, Proposition 3.3 in \cite{AbS2} shows that $(\gamma_k)_k$ has a convergent subsequence. 
\end{proof} 

\subsection{Construction of a downward pseudo-gradient} 
Suppose that $L \in C^\infty(S^1 \times T\R^n)$ satisfies (L1) and (L2). 
To define a Morse complex of $\mca{S}_L$, we need the following condition: 
\begin{enumerate}
\item[(L0):]
Every $\gamma \in \mca{P}(L)$ is nondegenerate. 
\end{enumerate}

The following lemma (basically the same as Theorem 4.1 in \cite{AbS2})
constructs a downward pseudo-gradient vector field for $\mca{S}_L$. 
For the definitions of the terms "Lyapunov function", "Morse vector field", "Morse-Smale condition", see Section 2 of \cite{AbS2}. 

\begin{lem}\label{lem:vectorfield}
If $L \in C^\infty(S^1 \times T\R^n)$ satisfies (L0), (L1), (L2), 
there exists a smooth vector field $X$ on 
$\Lambda(\R^n)$ which satisfies the following conditions: 
\begin{enumerate}
\item $X$ is complete.
\item $\mca{S}_L$ is a Lyapunov function for $X$. 
\item $X$ is a Morse vector field. $X(\gamma)=0$ if and only if $\gamma \in \mca{P}(L)$. Every $\gamma \in \mca{P}(L)$ has a finite Morse index, 
which is denoted by $\ind_\Morse(\gamma)$. 
\item The pair $(\mca{S}_L, X)$ satisfies the Palais-Smale condition. 
\item $X$ satisfies the Morse-Smale condition up to every order. 
\end{enumerate}
\end{lem}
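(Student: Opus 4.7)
My plan is to follow the scheme of \cite{AbM,AbS2}: start from the negative gradient $-D\mca{S}_L$ and modify it to a smooth vector field with all desired properties. The natural candidate $-D\mca{S}_L$ is only a continuous vector field on $\Lambda(\R^n)$ because Lemma \ref{lem:dSL} only guarantees that $\mca{S}_L$ is Fr\'echet $C^1$ with Gateaux differentiable differential. Using a smooth partition of unity on the Hilbert manifold $\Lambda(\R^n)$, I would approximate $-D\mca{S}_L$ by a smooth vector field $X_0$ satisfying the pseudo-gradient inequalities
\[
d\mca{S}_L(\gamma)\bigl(X_0(\gamma)\bigr) \le -\tfrac{1}{2}\| D\mca{S}_L(\gamma) \|_{W^{1,2}}^2,
\qquad
\| X_0(\gamma) \|_{W^{1,2}} \le 2\, \| D\mca{S}_L(\gamma) \|_{W^{1,2}}.
\]
Near each $\gamma_0 \in \mca{P}(L)$, I would further replace $X_0$ by the smooth linear model determined by $d^2\mca{S}_L(\gamma_0)$ in a local chart; by (L0) this Hessian is nondegenerate, and by the finite Morse index argument below it is hyperbolic, so the resulting $X$ is a Morse vector field with zero set exactly $\mca{P}(L)$, and $\mca{S}_L$ strictly decreases along nonconstant flow lines.

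\textbf{Finite Morse index, Palais-Smale, completeness.} At each $\gamma \in \mca{P}(L)$, the Hessian $d^2\mca{S}_L(\gamma)$ on $W^{1,2}(S^1,\R^n)$ differs from the Hessian of the quadratic model $\gamma \mapsto \int_{S^1} \bigl(|\dot\gamma|^2/4a - a|\gamma|^2\bigr)\, dt$ by a compact symmetric operator (by (L1) and the compact embedding $W^{1,2} \hookrightarrow C^0$). The model Hessian, diagonalized by Fourier modes, is negative exactly on the finitely many modes with $|k| < a/\pi$ and has no kernel since $a \notin \pi\Z$. Hence by stability of the essential spectrum under compact perturbations, $\ind_\Morse(\gamma)$ is finite. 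The Palais-Smale condition for $(\mca{S}_L, X)$ follows from Corollary \ref{cor:PS} together with the pseudo-gradient bounds above, since a PS-sequence for $(\mca{S}_L, X)$ is also one for $(\mca{S}_L, D\mca{S}_L)$. Completeness is then obtained by multiplying $X$ by the smooth positive function $\bigl(1 + \|X\|_{W^{1,2}}^2\bigr)^{-1/2}$, which preserves properties (2)--(5).

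\textbf{Morse-Smale (main obstacle).} The hardest step is to arrange the Morse-Smale condition up to every order. Following Section 2 of \cite{AbS2}, I would perform a further small smooth perturbation of $X$ supported in neighborhoods of compact subsets of heteroclinic connections, generic in a suitable Banach space of admissible perturbations. The finite Morse index just established implies that unstable manifolds are finite-dimensional and stable manifolds are of finite codimension, so a Sard-Smale transversality argument yields a residual set of perturbations for which all pairs of stable and unstable manifolds meet transversely, and an iterative argument as in \cite{AbM} upgrades this to transversality along arbitrarily long broken trajectories. The principal subtlety is to choose perturbations small enough (in a fixed $C^k$-norm) compared with the positive lower bound on $-d\mca{S}_L(X)$ away from $\mca{P}(L)$, so that the pseudo-gradient, Lyapunov, completeness, and PS properties of $X$ are preserved; this is guaranteed by working locally around critical points with the exponential chart of a background complete Riemannian metric on $\Lambda(\R^n)$ and using the uniform bounds coming from (L1)$'$.
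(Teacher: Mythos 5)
Your proposal is correct and follows the same basic strategy as the paper: near each critical point, use a smooth local model coming from the Hessian (this is precisely what Lemma 4.1 of \cite{AbS2}, which the paper invokes, provides); away from critical points, use a smooth approximation of the normalized negative gradient; glue by a partition of unity; verify PS from the local quadratic decay and the uniform lower bound on $\|D\mca{S}_L\|$ off the critical set; perturb to Morse--Smale by Sard--Smale. Your Fourier/compact-perturbation argument for finite Morse index and hyperbolicity of the Hessian is correct and is implicit in the cited result from \cite{AbS2}.

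Two mild differences from the paper are worth noting, neither of which is a gap. First, the paper arranges completeness by construction: the local building blocks (the normalized negative gradient $-D\mca{S}_L/\|D\mca{S}_L\|$ away from critical points, the shrunken local models near them) are all of $W^{1,2}$-norm $\le 1$, so the glued vector field has $\|X\| \le 1$ everywhere and is automatically complete on the Hilbert manifold. You instead allow $\|X_0\| \le 2\|D\mca{S}_L\|$ (unbounded) and rescale by $(1+\|X\|^2)^{-1/2}$ afterward; this also works and preserves (2)--(5) as you say, but the paper's choice avoids the rescaling entirely. Second, the paper carries out the construction strip by strip over $\{\min I_m < \mca{S}_L < \max I_m\}$ and then glues the strips, which cleanly handles the fact that $\mca{P}(L)$ is infinite (only finitely many critical points lie in any bounded action range by PS and (L0)). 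Your proposal performs the local modifications globally, which is fine but implicitly requires the same non-accumulation of critical points; making this explicit via the strip decomposition, as the paper does, is slightly tidier.
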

\begin{proof}
In the course of this proof, we use the following abbreviation:
\[
\{a < \mca{S}_L < b\}:= \{ \gamma \in \Lambda(\R^n) \mid a< \mca{S}_L(\gamma) < b\}.
\] 
Moreover, $\|\, \cdot \, \|_{W^{1,2}}$ is abbreviated as $\| \, \cdot \, \|$. 

Since $(\mca{S}_L, D \mca{S}_L)$ satisfies the PS-condition, and all critical points are nondegenerate, 
for any $a<b$ there exist only finitely many critical points of $\mca{S}_L$ on 
$\{a <\mca{S}_L<b\}$. 
We denote them as $\gamma_1, \ldots, \gamma_m$. 

For each $1 \le j \le m$, Lemma 4.1 in \cite{AbS2} shows that there exist $U_{\gamma_j}$, $Y_{\gamma_j}$ such that:
\begin{itemize}
\item $U_{\gamma_j}$ is a neighborhood of $\gamma_j$ in $\{a< \mca{S}_L < b\}$. 
\item $Y_{\gamma_j}$ is a smooth vector field on $U_{\gamma_j}$. 
\item $\gamma_j$ is a critical point of $Y_{\gamma_j}$ with a finite Morse index, and there holds 
\[
d\mca{S}_L(Y_{\gamma_j}(\gamma)) \le -\lambda(\gamma_j) \| \gamma- \gamma_j \|^2 \qquad (\forall \gamma \in U_{\gamma_j}), 
\]
where $\lambda(\gamma_j)$ is a positive constant.
\end{itemize}
By taking $U_{\gamma_j}$ sufficiently small, we may asume that $\|Y_{\gamma_j}\| \le 1$ on $U_{\gamma_j}$. 
We take a smaller neighborhood $V_{\gamma_j}$ such that $\overline{V_{\gamma_j}} \subset U_{\gamma_j}$. 

Since $(\mca{S}_L, D \mca{S}_L)$ satisfies the PS-condition, there exists $\varepsilon>0$ such that : 
for any $\gamma \in \{ a< \mca{S}_L < b\} \setminus (U_{\gamma_1} \cup \cdots \cup U_{\gamma_m})$, 
$\| D \mca{S}_L(\gamma) \| \ge \varepsilon$. 
For each $\gamma \notin U_{\gamma_1} \cup \cdots \cup U_{\gamma_m}$, 
set $Y_\gamma:=-D\mca{S}_L(\gamma)/ \| D\mca{S}_L(\gamma) \|$. 
Then, obviously $\| Y_\gamma \| =1$. Moreover, 
\[
d\mca{S}_L(\gamma)(Y_\gamma) = \langle D\mca{S}_L(\gamma), Y_\gamma \rangle = - \| D\mca{S}_L(\gamma) \| \le -\varepsilon.
\]
Since $\mca{S}_L$ is $C^1$ by Lemma \ref{lem:dSL} (1), 
if $U_\gamma$ is a sufficiently small neighborhood of $\gamma$, 
\[
\gamma' \in U_\gamma \implies d\mca{S}_L(\gamma')(Y_\gamma) \le -\varepsilon/2.
\]
We may also assume that $U_\gamma$ is disjoint from $\overline{V_{\gamma_1}} \cup \cdots \cup \overline{V_{\gamma_m}}$.
Moreover, since $\Lambda(\R^n)$ is paracompact, we can define a locally finite open covering $\{U_\gamma\}_{\gamma \in \Gamma}$ 
of $\{a < \mca{S}_L < b\}$ such that $\gamma_1, \ldots, \gamma_m \in \Gamma$. 

Let $\{\chi_\gamma\}_{\gamma \in \Gamma}$ be a partition of unity with respect to $\{U_\gamma\}_{\gamma \in \Gamma}$.  
Then we define a vector field $Y$ on $\{a< \mca{S}_L <b\}$ by $Y:= \sum_{\gamma \in \Gamma} \chi_\gamma Y_\gamma$. 
Since each $Y_\gamma$ satisfies $\| Y_\gamma \| \le 1$, 
it is clear that $\| Y \| \le 1$. 
Moreover, there exists $c>0$ such that
\[
\gamma \notin V_{\gamma_1} \cup \cdots \cup V_{\gamma_m} \implies 
d\mca{S}_L(\gamma)(Y(\gamma)) \le -c.
\]

Now we show that $(\mca{S}_L, Y)$ satisfies the PS-condition on $\{a< \mca{S}_L < b\}$. 
Let $(x_k)_k$ be a sequence on $\{a<\mca{S}_L<b\}$ such that $\lim_{k \to \infty} d\mca{S}_L(x_k)(Y(x_k))=0$.
Then, $x_k \in V_{\gamma_1} \cup \cdots \cup V_{\gamma_m}$ for sufficiently large $k$. 
By taking a subsequence, we may assume that $x_k \in V_{\gamma_1}$ for all $k$. Then, since
\[
d\mca{S}_L(x_k)(Y(x_k)) = d\mca{S}_L(x_k)(Y_{\gamma_1}(x_k)) \le -\lambda(\gamma_1) \| x_k - \gamma_1 \|^2,
\]
there holds $\lim_{k \to \infty} \| x_k - \gamma_1 \| = 0$. Thus $(\mca{S}_L, Y)$ satisfies the PS condition. 
We have defined a smooth vector field $Y$ on $\{a< \mca{S}_L < b\}$, which satisfies (2), (3), (4) and $\| Y \| \le 1$. 

Finally we construct $X$ on $\Lambda(\R^n)$. 
Take a sequence of closed intervals $(I_m)_{m \in \Z}$ with the following properties:
\begin{itemize}
\item $(\min I_m)_m$, $(\max I_m)_m$ are increasing sequences. 
\item $\bigcup_m I_m = \R$.
\item $I_m \cap I_{m'}  \ne \emptyset$ if and only if $|m- m'| \le 1$. 
\item For any $m \in \Z$, $I_m \cap I_{m+1}$ does not contain critical values of $\mca{S}_L$. 
\end{itemize}
For every $m$, there exists a smooth vector field $X_m$ on $\{ \min I_m < \mca{S}_L < \max I_m\}$ which satisfies (2), (3), (4) and $\| X_m \| \le 1$. 
Finally, taking a partition of unity $(\rho_m)_m$ with respect to the open covering $\{ \min I_m < \mca{S}_L < \max I_m\}_m$ of $\Lambda(\R^n)$, 
we define a vector field $X$ on $\Lambda(\R^n)$ by 
$X: = \sum_m \rho_m X_m$. 
Then, it is easy to check that $(\mca{S}_L, X)$ satisfies the PS condition. 
Moreover, since $X$ satisfies $\| X \| \le 1$ everywhere, $X$ is complete. 

The vector field $X$ defined above satisfies (1)-(4) in the statement. 
Since it is of class $C^\infty$, the Sard-Smale theorem shows that (5) is satisfied by a sufficiently small $C^\infty$ perturbation. 
\end{proof}

\subsection{Morse complex} 

Let $X$ be a downward pseudo-gradient for $\mca{S}_L$ on $\Lambda(\R^n)$, which is constructed in Lemma \ref{lem:vectorfield}. 
Since $X$ is complete, one can define $(\varphi^X_t)_{t \in \R}$, a family of diffeomorphisms on $\Lambda(\R^n)$ so that 
\[
\varphi^X_0 = \id_{\Lambda(\R^n)}, \qquad
\partial_t (\varphi^X_t) = X(\varphi^X_t). 
\]
For each $\gamma \in \mca{P}(L)$, its stable and unstable manifolds are defined as 
\begin{align*}
W^s(\gamma:X)&:= \{ p \in \Lambda(\R^n) \mid \lim_{t \to \infty} \varphi^X_t(p) = \gamma \}, \\
W^u(\gamma:X)&:= \{ p \in \Lambda(\R^n) \mid \lim_{t \to -\infty} \varphi^X_t(p) = \gamma \}.
\end{align*}
For any $\gamma, \gamma' \in \mca{P}(L)$, 
we set $\mca{M}_X(\gamma, \gamma'):= W^u(\gamma:X) \cap W^s(\gamma',X)$. 
Since $\mca{M}_X(\gamma, \gamma')$ consists of flow lines of $X$, $\mca{M}_X(\gamma,\gamma')$ admits a natural $\R$ action.  
We denote the quotient by $\bar{\mca{M}}_X(\gamma,\gamma')$. 

For any $\gamma, \gamma' \in \mca{P}(L)$, $W^u(\gamma:X)$ and $W^s(\gamma':X)$ are transverse, since $X$ satisfies the Morse-Smale condition. 
Therefore, $\bar{\mca{M}}_X(\gamma,\gamma')$ is a smooth manifold with dimension $\ind_\Morse(\gamma) - \ind_\Morse(\gamma')-1$. 
When $\ind_\Morse(\gamma) - \ind_\Morse(\gamma') =1$, $\bar{\mca{M}}_X(\gamma, \gamma')$ consists of finitely many points. 

For any $-\infty < a < b < \infty$, $\CM_*^{[a,b)}(L)$ denotes the free $\Z_2$-module generated by
$\{ \gamma \in \mca{P}(L) \mid a \le \mca{S}_L(\gamma) <  b\}$. 
We define a differential $\partial_{L,X}$ on $\CM_*^{[a,b)}(L)$ by 
\[
\partial_{L,X}\bigl([\gamma]\bigr):= \sum_{\ind_\Morse(\gamma')=\ind_\Morse(\gamma)-1} \sharp \bar{\mca{M}}_X(\gamma,\gamma') \cdot [\gamma'].
\]
Then $(\CM_*^{[a,b)}(L), \partial_{L,X})$ is a chain complex, and its homology group
$\HM_*^{[a,b)}(L,X)$ is isomorphic to $H_*(\{\mca{S}_L<b\}, \{\mca{S}_L<a\})$. 
For details, see \cite{AbM}.

Next we discuss functoriality. 
Consider $L^0, L^1 \in C^\infty(S^1 \times T\R^n)$ which satisfy (L0), (L1), (L2) and 
$L^0(t,q,v)>L^1(t,q,v)$ for any $(t,q,v) \in S^1 \times T\R^n$. 
Take vector fields $X^0, X^1$ on $\Lambda(\R^n)$ such that 
$(L^0,X^0)$ and $(L^1,X^1)$ satisfy the conditions in Lemma \ref{lem:vectorfield}. 

We assume that $\mca{P}(L^0) \cap \mca{P}(L^1)= \emptyset$ (this can be achieved by slightly perturbing $L^0$).
Then, by a $C^\infty$-small perturbation of $X^0$, one can assume the following: 
\begin{quote}
For any $\gamma^0 \in \mca{P}(L^0)$ and $\gamma^1 \in \mca{P}(L^1)$, 
$W^u(\gamma^0:X^0)$ is transverse to $W^s(\gamma^1 : X^1)$. 
\end{quote}
If this assumption is satisfied, 
$\mca{M}_{X^0, X^1}(\gamma^0, \gamma^1):= W^u(\gamma^0:X^0) \cap W^s(\gamma^1:X^1)$
is a smooth manifold with dimension $\ind_\Morse(\gamma^0) - \ind_\Morse(\gamma^1)$. 

We define a chain map 
$\Phi: \CM_*^{[a,b)}(L^0,X^0) \to \CM_*^{[a,b)}(L^1,X^1)$ by
\[
\Phi([\gamma]):= \sum_{\ind_\Morse(\gamma')=\ind_\Morse(\gamma)} \sharp 
\mca{M}_{X^0, X^1}(\gamma, \gamma')\cdot [\gamma'].
\]
$\Phi$ induces a homomorphism on homology, which coincides with the homomorphism induced by the inclusion 
$(\{\mca{S}_{L^0}<a\}, \{ \mca{S}_{L^0} <b \}) \to (\{\mca{S}_{L^1}<a\}, \{ \mca{S}_{L^1} <b \})$.

\section{Proof of Theorem \ref{thm:main}}

The goal of this section is to prove Theorem \ref{thm:main}, i.e. to compute $\SH_*^{[a,b)}(D^*V) $ for a bounded domain $V \subset \R^n$ with smooth boundary. 
In Section 4.1, we reduce Theorem \ref{thm:main} to Theorem \ref{thm:key} and Lemma \ref{lem:excision}. 
Theorem \ref{thm:key} is the main step, and it is proved in Sections 4.2 and 4.3, assuming some $C^0$- estimates of Floer trajectories: 
Lemmas \ref{lem:C0_3}, \ref{lem:C0_4}, \ref{lem:C0_5}. 
These $C^0$- estimates are proved in Section 5. 
Lemma \ref{lem:excision} is a technical lemma on loop space homology,
and it is proved in Section 4.4. 

\subsection{Outline}
Let us take $(a_m)_m$, an increasing sequence of positive numbers such that $a_m \notin \pi \Z$ for any $m$, and $\lim_{m \to \infty} a_m = \infty$. 
We take a sequence $(k_m)_m$ in $C^\infty(\R_{\ge 0},\R)$ such that:
\begin{enumerate}
\item[(k1):] For every $m$, $\partial_t k_m(t)>0$ and $\partial_t^2 k_m(t) \ge 0$ for any $t \ge 0$. 
\item[(k2):] For every $m$, $\partial_t k_m \equiv a_m$ on $\{ t \mid k_m(t) \ge 0\}$.  
\item[(k3):] $(k_m)_m$ is strictly increasing. Moreover, $\sup_m k_m(t) = \begin{cases} 0 &(0 \le t \le 1) \\  \infty &(t >1) \end{cases}$.
\end{enumerate}
Let us define $K_m \in C^\infty(\R^n,\R)$ by 
$K_m(p):= k_m(|p|^2)$. 
Then, (k1) implies that $K_m$ is strictly convex. 
Moreover, (k2) implies that 
\[
\R^n \to \R^n; p \mapsto v(p):= \partial_p K_m
\]
is a diffeomorphism. We denote its inverse by $p(v)$, i.e.
$\partial_p K_m(p(v)) = v$. 
Let $K^{\vee}_m$ be the Legendre transform of $K_m$, i.e. 
$K^{\vee}_m(v):= p(v) \cdot v - K_m(p(v))$. 
Then, it is easy to show that $(K^{\vee}_m)_m$ is strictly decreasing, and 
$\inf_m K^{\vee}_m(v) = |v|$ for any $v \in \R^n$. 

We take a sequence $(Q_m)_m$ of smooth functions on $\R^n$, such that 
\begin{enumerate} 
\item[(Q1):] There exists a sequence of constants $(c_m)_m$ such that $Q_m(q) - \bigl( a_m |q|^2 + c_m\bigr)$ is compactly supported. 
\item[(Q2):] $(Q_m)_m$ is strictly increasing. Moreover, 
$\sup_m Q_m(q) = \begin{cases} 0 &( q \in \bar{V}) \\ \infty &( q \notin \bar{V}) \end{cases}$. 
\end{enumerate}

Let $H'_m(q,p):=Q_m(q) + K_m(p)$. 
Then, for every $m$, $H'_m$ satisfies (H1). 
Moreover, $(H'_m)_m$ is strictly increasing, and 
\[
\sup_m H'_m(q,p) =  \begin{cases} 0 &\bigl( (q,p) \in \overline{D^*V} \bigr) \\ \infty &\bigl( (q,p) \notin \overline{D^*V} \bigr) \end{cases}.
\]
Let $L'_m$ be the fiberwise Legendre transform of $H'_m$. 
It is easy to see that $L'_m(q,v)=K^{\vee}_m(v) - Q_m(q)$. 
Then, for every $m$, $L'_m$ satisfies (L1) and (L2). 
$(L'_m)_m$ is strictly decreasing, and there holds 
$\inf_m L'_m(q,v) = \begin{cases} |v| &(q \in \bar{V}) \\ -\infty &(q \notin \bar{V}) \end{cases}$. 

Since $(H'_m)_m$ is \textit{strictly} increasing, by sufficiently small perturbations of $(H'_m)_m$, one can obtain a sequence 
$(H^m)_m$ on $C^\infty(S^1 \times T^*\R^n)$ with the following properties:

\begin{itemize}
\item For every $m$, $H^m$ is admissible. 
\item $(H^m)_m$ is strictly increasing, and $\sup_m H^m(t,q,p) = \begin{cases} 0 &((q,p) \in \overline{D^*V}) \\ \infty &((q,p) \notin \overline{D^*V}) \end{cases}$.
\item For every $m$, its Legendre transform $L^m$ is well-defined, and it satisfies (L0), (L1), (L2). 
$(L^m)_m$ is strictly decreasing, and $\inf_m L^m(t,q,v) = \begin{cases} |v| &( q \in \bar{V}) \\ -\infty &(q \notin \bar{V}) \end{cases}$. 
\end{itemize} 

\begin{rem}
For notational reasons, we use superscripts for $H^m$ and $L^m$. 
\end{rem} 

By the first two properties, $\SH_*^{[a,b)}(D^*V) = \varinjlim_{m \to \infty} \HF^{[a,b)}_*(H^m)$. 
Now we state the following key result, which is proved in Sections 4.2 and 4.3: 

\begin{thm}\label{thm:key}
For any $-\infty < a < b < \infty$ and $m$, there exists a natural isomorphism
$\HM_*^{[a,b)}(L^m) \cong \HF_*^{[a,b)}(H^m)$. 
The following diagram is commutative for every $m$: 
\[
\xymatrix{
\HM_*^{[a,b)}(L^m) \ar[r] \ar[d]_-{\cong} & \HM_*^{[a,b)}(L^{m+1}) \ar[d]^-{\cong} \\
\HF_*^{[a,b)}(H^m) \ar[r] & \HF_*^{[a,b)}(H^{m+1}).
}
\]
\end{thm}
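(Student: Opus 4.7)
The plan is to construct the isomorphism by the hybrid moduli space method of Abbondandolo--Schwarz \cite{AbS1}, taking the $C^0$-estimates Lemmas \ref{lem:C0_3}, \ref{lem:C0_4}, \ref{lem:C0_5} as black boxes to be established in Section 5. The starting point is the fiberwise Legendre correspondence: since $L^m$ is the fiberwise Legendre dual of $H^m$, the map $\gamma \mapsto (\gamma, \partial_v L^m(t, \gamma, \dot{\gamma}))$ gives a bijection $\mca{P}(L^m) \to \mca{P}(H^m)$ which preserves the action, i.e.\ $\mca{S}_{L^m}(\gamma) = \mca{A}_{H^m}(x)$, and which identifies $\ind_{\Morse}(\gamma)$ with $\ind_{\CZ}(x)$. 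This identifies the generators of $\CM^{[a,b)}_*(L^m)$ and $\CF^{[a,b)}_*(H^m)$, though of course not their differentials.

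Next, fix a pseudo-gradient $X$ for $\mca{S}_{L^m}$ as in Lemma \ref{lem:vectorfield} and an $S^1$-dependent $J$ close to $J_{\std}$ as in Lemma \ref{lem:C0_1}. For $\gamma_- \in \mca{P}(L^m)$ and $x_+ \in \mca{P}(H^m)$, I would define the hybrid moduli space $\mca{M}^{\text{hyb}}(\gamma_-, x_+)$ to consist of pairs $(\alpha, u)$ where $\alpha: (-\infty, 0] \to \Lambda(\R^n)$ is a negative flow line of $X$ asymptotic to $\gamma_-$, and $u: [0, \infty) \times S^1 \to T^*\R^n$ is a Floer half-trajectory asymptotic to $x_+$, subject to the matching condition that the base component of $u(0, t)$ equals $\alpha(0)(t)$. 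Standard Fredholm theory gives virtual dimension $\ind_{\Morse}(\gamma_-) - \ind_{\CZ}(x_+)$, and generic $J$ (and, if needed, a perturbation of $X$) achieves transversality. Compactness in index zero combines the Palais--Smale property of $X$ (Corollary \ref{cor:PS}) on the Morse half with Lemmas \ref{lem:C0_3} and \ref{lem:C0_4} on the Floer half and at the matching slice.

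I would then define $\Phi^m: \CM^{[a,b)}_*(L^m) \to \CF^{[a,b)}_*(H^m)$ by
\[
\Phi^m\bigl([\gamma_-]\bigr) := \sum_{\ind_{\CZ}(x_+) = \ind_{\Morse}(\gamma_-)} \sharp \mca{M}^{\text{hyb}}(\gamma_-, x_+) \cdot [x_+].
\]
Analysis of ends of one-dimensional hybrid moduli spaces (breaking off a Morse trajectory on the left, a Floer trajectory on the right) shows that $\Phi^m$ is a chain map. The action filtration is respected because the Legendre inequality $H^m(t, q, p) + L^m(t, q, v) \ge p \cdot v$ (with equality iff $v = \partial_p H^m$) forces $\mca{A}_{H^m}(u(0, \cdot)) \le \mca{S}_{L^m}(\alpha(0)) \le \mca{S}_{L^m}(\gamma_-)$, so $\Phi^m$ descends to the $[a, b)$-truncation. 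To see that $\Phi^m$ is an isomorphism I would use the standard upper-triangular argument: the constant hybrid solution between corresponding critical points $\gamma^*$ and $x^*$ is the unique solution at the top of the action filtration for that pair, while all other contributions carry strictly smaller action; ordering the basis by action, $\Phi^m$ becomes upper-triangular with ones on the diagonal, hence a chain isomorphism, and in particular an isomorphism on homology.

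For the commutative square I would use a one-parameter version of the same construction: a monotone homotopy from $L^{m+1}$ to $L^m$ on the Morse side paired with a monotone homotopy from $H^m$ to $H^{m+1}$ on the Floer side produces a parametrized hybrid moduli space whose boundary analysis supplies a chain homotopy between $\Phi^{m+1}$ composed with the Morse continuation and the Floer continuation composed with $\Phi^m$. The main obstacle throughout is $C^0$-control of Floer and hybrid trajectories over the non-compact phase space $T^*\R^n$: the arguments of \cite{AbS1} exploit compactness of the base, so in our setting one must combine them with the quadratic-Hamiltonian, Floer--Hofer style estimates of \cite{FH}. This is exactly what Lemmas \ref{lem:C0_3}--\ref{lem:C0_5} package, and this is where the substantive analytic work of Section 5 is concentrated.
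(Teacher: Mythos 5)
Your proposal follows essentially the same route as the paper: the isomorphism $\Psi^m$ is built from half-cylinder hybrid moduli spaces with the base matching condition $\pi(u(0)) \in W^u(\gamma : X^m)$, the Legendre duality gives the action inequality making $\Psi^m$ upper-triangular with ones on the diagonal, and the $C^0$-estimates of Section~5 are what replace the compactness of the base used in Abbondandolo--Schwarz. The one place where the paper is more explicit is the commutative square: instead of a single one-parameter family, the paper introduces an intermediate chain map $\Theta$ counting hybrid trajectories from $\mca{P}(L^m)$ directly to $\mca{P}(H^{m+1})$, and then constructs two separate chain homotopies $\Phi^H \circ \Psi^m \sim \Theta$ and $\Theta \sim \Psi^{m+1} \circ \Phi^L$ via the parametrized moduli spaces $\mca{N}^1$ and $\mca{N}^0$, which is exactly where Lemmas~\ref{lem:C0_4} and~\ref{lem:C0_5} enter.
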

Then we obtain 
\[
\varinjlim_{m \to \infty} \HF_*^{[a,b)}(H^m) \cong 
\varinjlim_{m \to \infty} \HM_*^{[a,b)}(L^m) \cong 
H_* \biggl( \bigcup_m \{ \mca{S}_{L^m} < b\}, \bigcup_n \{ \mca{S}_{L^m} < a \} \biggr).
\]
Since $(L^m)_m$ is strictly decreasing and 
$\inf_m L^m(t,q,v) = \begin{cases} |v| &(q \in \bar{V}) \\ -\infty &( q \notin \bar{V}) \end{cases}$, for any $c \in \R$ 
\[
\inf_m \mca{S}_{L^m}(\gamma) < c \iff \text{$\gamma(S^1) \not\subset \bar{V}$ or $(\text{length of $\gamma$})<c$}. 
\]
Therefore, for any $a<0$ and $b>0$, 
\begin{align*}
\SH_*^{[a,b)}(D^*V) &\cong H_* \bigl( \Lambda^{<b}(\R^n) \cup (\Lambda(\R^n) \setminus \Lambda(\bar{V})), \Lambda(\R^n) \setminus \Lambda(\bar{V}) \bigr) \\
 & \cong H_* \bigl(\Lambda^{<b}(\R^n), \Lambda^{<b}(\R^n) \setminus \Lambda(\bar{V}) \bigr) \\
 & \cong H_*(\Lambda^{<b}(\bar{V}), \Lambda^{<b}(\bar{V}) \setminus \Lambda(V)), 
\end{align*}
where the second isomorphism follows from excision, and 
the third isomorphism follows from the next Lemma \ref{lem:excision}, which is proved in Section 4.4. 

\begin{lem}\label{lem:excision}
Let $V$ be a bounded domain in $\R^n$ with smooth boundary. 
For any $0 < b < \infty$, there exists a natural isomorphism 
\[
H_*(\Lambda^{<b}(\R^n), \Lambda^{<b}(\R^n) \setminus \Lambda(\bar{V}) ) \cong 
H_*(\Lambda^{<b}(\bar{V}), \Lambda^{<b}(\bar{V}) \setminus \Lambda(V)). 
\]
\end{lem}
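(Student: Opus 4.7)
The plan is to prove the isomorphism in two steps, through the intermediate pair $\bigl(\Lambda^{<b}(V^\varepsilon), \Lambda^{<b}(V^\varepsilon) \setminus \Lambda(\bar V)\bigr)$, where $V^\varepsilon := \{x \in \R^n : \dist(x, \bar V) < \varepsilon\}$ is an open tubular neighborhood of $\bar V$ with $\varepsilon > 0$ small. First I would apply excision to identify the left-hand side with this intermediate pair, and then I would exhibit a homotopy equivalence of pairs between the intermediate pair and the right-hand side via the nearest-point projection $V^\varepsilon \to \bar V$.

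For the excision step, the Sobolev embedding $W^{1,2}(S^1, \R^n) \hookrightarrow C^0(S^1, \R^n)$ shows that $\Lambda(V^\varepsilon)$ is open in $\Lambda(\R^n)$, since $\gamma(S^1)$ is compact and $V^\varepsilon$ is open. Hence $Z := \Lambda^{<b}(\R^n) \setminus \Lambda(V^\varepsilon)$ is closed; and because $\bar V \subset V^\varepsilon$, we have $Z \subset \Lambda^{<b}(\R^n) \setminus \Lambda(\bar V)$, the latter being open. The standard excision axiom then yields
\[
H_*\bigl(\Lambda^{<b}(\R^n), \Lambda^{<b}(\R^n) \setminus \Lambda(\bar V)\bigr) \cong H_*\bigl(\Lambda^{<b}(V^\varepsilon), \Lambda^{<b}(V^\varepsilon) \setminus \Lambda(\bar V)\bigr).
\]

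For the second step I would take $\varepsilon$ small enough that the nearest-point projection $F : V^\varepsilon \to \bar V$ is well-defined and Lipschitz, with $F|_{\bar V} = \id$ and $F(V^\varepsilon \setminus \bar V) \subset \partial V$. The forward map $r(\gamma) := F \circ \gamma$ sends a loop exiting $\bar V$ to a loop in $\bar V$ touching $\partial V$, so it is a map of pairs onto the right-hand side. For a homotopy inverse, I would set $\psi_\sigma(x) := x + \sigma \chi(d(x)) \nu(x)$, where $\nu$ extends the outward unit normal to $\partial V$ and $\chi$ is a smooth cutoff with $\chi(0) > 0$; for $\sigma > 0$ this pushes $\partial V$ strictly outside $\bar V$, so $\gamma \mapsto \psi_\sigma \circ \gamma$ sends loops touching $\partial V$ to loops exiting $\bar V$. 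Straight-line homotopies in normal coordinates (for instance $F_\tau(y, t) := (y, (1-\tau)t^+ + t^-)$ between $\id$ and $F$), combined with the family $\psi_\sigma$, then give pair homotopies $r \circ (\psi_\sigma)_* \simeq \id$ and $(\psi_\sigma)_* \circ r \simeq \id$.

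The hard part will be length control: for non-convex $V$, both $r$ and $(\psi_\sigma)_*$ can increase the length of a loop by a factor $1 + O(\varepsilon)$, so a priori they need not land in $\Lambda^{<b}$. The plan to handle this is to let the perturbation parameter depend continuously on the loop. For example, one takes $\sigma = \sigma(\gamma) := \min\bigl(\sigma_0, (b - L(\gamma))/(2C)\bigr)$, where $L(\gamma)$ denotes the length of $\gamma$ and $C$ bounds $\|d(\chi \nu)\|_{C^0}$, and similarly replaces $\tau$ in $F_\tau$ by a $\gamma$-dependent parameter vanishing as $L(\gamma) \uparrow b$. These $\gamma$-dependent deformations stay within $\Lambda^{<b}$ at every stage while still pushing boundary-touching loops strictly outside $\bar V$, which is enough for the homotopy equivalence.
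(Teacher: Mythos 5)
Your overall strategy---excise down to a tubular neighborhood, then compare with $\Lambda^{<b}(\bar V)$ by retracting the collar onto $\bar V$---is close in spirit to the paper's Lemma \ref{lem:excision-2}, and you have correctly located the central difficulty in the length control. The excision step itself is fine. But the proposed fix has a genuine gap. The nearest-point projection $F:V^\varepsilon \to \bar V$ is not length-nonincreasing once $\partial V$ is non-convex: projecting the exterior collar onto $\partial V$ stretches tangential directions by a factor $1+O(\varepsilon)$ near concave boundary regions, so $r(\gamma)=F\circ\gamma$ need not land in $\Lambda^{<b}(\bar V)$ and $r$ is not a map of pairs at all. Making the interpolation parameter $\tau$ depend on $\gamma$ does not repair this: the interpolant $F_\tau$ only takes values in $\bar V$ at $\tau=1$, and for a loop with $L(\gamma)$ arbitrarily close to $b$ the full projection $F_1=F$ must eventually be applied, at which point its Lipschitz constant $>1$ pushes the length past $b$. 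A $\gamma$-dependent stopping time therefore produces a deformation inside $\Lambda^{<b}(V^\varepsilon)$, never a well-defined map into the target pair $\bigl(\Lambda^{<b}(\bar V),\Lambda^{<b}(\bar V)\setminus\Lambda(V)\bigr)$; the situation is not symmetric with the outward push $\psi_\sigma$, which can safely degenerate to the identity.

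The paper sidesteps this with a device that you are missing: an exhaustion by auxiliary metrics $g^l\ge|\cdot|$, decreasing to the Euclidean one, each having a \emph{product} structure $g^\partial\oplus ds^2$ on a tubular neighborhood of $\partial V$ (condition (g-3)). In those coordinates the retraction $(y,s)\mapsto(y,(1-t)s+t\rho(s))$ has no tangential stretching, so its $g^l$-Lipschitz constant is $\sup|\rho'|$, which can be kept $\le b/b'$ where $b'<b$ comes from compactness of the singular simplices of a given chain (exactly the ``strict inequality from compactness'' idea you are reaching for). One proves the isomorphism for each $\Lambda^{<b}_l$ and then passes to the direct limit over $l$ to recover the Euclidean case. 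Some such metric-modification (or an equivalent substitute) is indispensable here; as written, the proposal does not supply it.
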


Finally, we have to check that for any $b_- < b_+$, the following diagram commutes: 
\[
\xymatrix{
\SH_*^{[a,b^-)}(D^*V) \ar[r]^-{\cong} \ar[d] &  H_*\bigl(\Lambda^{<b^-}(\bar{V}), \Lambda^{<b^-}(\bar{V}) \setminus \Lambda(V) \bigr)\ar[d] \\
\SH_*^{[a,b^+)}(D^*V) \ar[r]_-{\cong}       & H_*\bigl(\Lambda^{<b^+}(\bar{V}), \Lambda^{<b^+}(\bar{V}) \setminus \Lambda(V) \bigr).
}
\]
This is clear from the construction, hence omitted.

\subsection{Construction of a chain level isomorphism}

In this and the next subsection, we prove Theorem \ref{thm:key}. 
In this subsection, we define an isomorphism 
\[
\HM^{[a,b)}_*(L^m) \to \HF^{[a,b)}_*(H^m). 
\]
Following \cite{AbS1}, we define
this isomorphism by considering so called \textit{hybrid moduli spaces}. 
Suppose we are given the following data: 
\begin{itemize}
\item $J^m=(J^m_t)_{t \in S^1}$, which is sufficiently close to the standard one, and $\CF_*^{[a,b)}(H^m,J^m)$ is well-defined. 
\item Smooth vector field $X^m$ on $\Lambda(\R^n)$, such that $\CM_*^{[a,b)}(L^m,X^m)$ is well-defined. 
\item $\gamma \in \mca{P}(L^m)$ and $x \in \mca{P}(H^m)$. 
\end{itemize} 
We consider the following equation for 
$u \in W^{1,3}( S^1 \times [0,\infty), T^*\R^n)$:
\begin{align*}
& \partial_s u - J^m_t\bigl( \partial_t u - X_{H^m_t}(u) \bigr) =0,  \\
& \pi(u(0)) \in W^u(\gamma:X^m), \\
& \lim_{s \to \infty} u(s) = x.
\end{align*}
$\pi$ denotes the natural projection 
$T^*\R^n \to \R^n; (q,p) \mapsto q$. 
The moduli space of solutions of this equation is denoted by $\mca{M}_{X^m,H^m,J^m}(\gamma,x)$. 

\begin{rem}
In the definition of $\mca{M}_{X^m,H^m,J^m}$, we have used a Sobolev space $W^{1,3}(S^1 \times [0,\infty), T^*\R^n)$. One can replace it with 
$W^{1,r}(S^1 \times [0,\infty), T^*\R^n)$ for any $2<r<4$. 
The condition $2<r<4$ is necessary to carry out Fredholm theory and prove $C^0$-estimates for Floer trajectories. 
\end{rem} 

To define a homomorphism by counting $\mca{M}_{X^m,H^m,J^m}(\gamma,x)$, we need the following results: 

\begin{lem}\label{lem:Fredholm}
For generic $J^m$, $\mca{M}_{X^m,H^m,J^m}(\gamma,x)$ is a smooth manifold 
of dimension $\ind_{\Morse}(\gamma) - \ind_{\CZ}(x)$ for any $\gamma \in \mca{P}(L^m)$ and $x \in \mca{P}(H^m)$.
\end{lem}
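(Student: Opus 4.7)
The plan is to apply the standard Sard--Smale transversality machinery to a suitably defined universal moduli space, following the framework of \cite{AbS1}. The hybrid equation is a Floer equation on the half-cylinder $S^1 \times [0,\infty)$, coupled to a finite-dimensional boundary constraint at $s=0$ from the unstable manifold $W^u(\gamma:X^m) \subset \Lambda(\R^n)$, and an asymptotic condition at $s=+\infty$ converging to the nondegenerate orbit $x \in \mca{P}(H^m)$.

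First I would set up the Banach manifold $\mca{B}$ of maps $u \in W^{1,3}_{\text{loc}}(S^1 \times [0,\infty), T^*\R^n)$ which decay to $x$ with a prescribed exponential rate $\delta>0$ smaller than the spectral gap of the asymptotic operator at $x$, and which satisfy $\pi(u(\cdot,0)) \in W^u(\gamma:X^m)$. Over $\mca{B}$ I would take the Banach bundle $\mca{E}$ of $L^3$-sections of $u^*TT^*\R^n$ (with the same exponential weight), and view the Floer equation as a section $\mca{F}(u) := \partial_s u - J^m_t(\partial_t u - X_{H^m_t}(u))$, whose zero set is $\mca{M}_{X^m,H^m,J^m}(\gamma,x)$. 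The use of $W^{1,3}$ (rather than $W^{1,2}$) is dictated by the Sobolev embedding in dimension two, which is needed both to make sense of the boundary trace $\pi(u(\cdot,0)) \in \Lambda(\R^n)$ and to set up the $C^0$-estimates in Section 5.

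Second, I would linearize $\mca{F}$ at a solution $u$. The linearization $D\mca{F}_u$ is a Cauchy--Riemann-type operator on $u^*TT^*\R^n$ with boundary condition $\pi_*\eta(\cdot,0) \in T_{\pi u(\cdot,0)} W^u(\gamma:X^m)$ (the cotangent-fiber component at $s=0$ being unconstrained) and exponential decay governed by the Hessian of $\mca{A}_{H^m}$ at $x$. This is a totally-real boundary condition of the Lagrangian-plus-finite-dimensional type studied in \cite{AbS1}, so standard spectral flow / Maslov index arguments give the Fredholm property, with
\[
\ind D\mca{F}_u = \ind_\Morse(\gamma) - \ind_\CZ(x),
\]
where the $\ind_\Morse(\gamma)$ contribution is the dimension of the tangent space to $W^u(\gamma:X^m)$ and the $-\ind_\CZ(x)$ is the standard Floer asymptotic contribution.

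Third, to obtain transversality I would parametrize over a Banach manifold $\mca{J}$ of $S^1$-families of $\omega_n$-compatible almost complex structures, sufficiently $C^\varepsilon$-close to $J_\std$ so that Lemma \ref{lem:C0_1} still applies, and consider the universal zero set $\{(u, J^m) \mid \mca{F}(u, J^m) = 0\}$. The standard argument shows that the universal linearization is surjective: a cokernel element $\eta$ paired with variations of $J^m$ vanishes on the open dense set $\{\partial_s u \neq 0\}$, and unique continuation for the adjoint Cauchy--Riemann operator forces $\eta \equiv 0$. Applying Sard--Smale to the projection onto $\mca{J}$ yields a residual set of $J^m$ for which $\mca{F}^{-1}(0)$ is a smooth manifold of the stated dimension. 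The main obstacle I expect is verifying the Fredholm index formula with the unstable-manifold boundary condition in the non-compact $T^*\R^n$ setting: one must show that the finite-dimensional constraint $W^u(\gamma:X^m) \subset \Lambda(\R^n)$, when combined with the free fiber direction at $s=0$, produces a Fredholm boundary problem on the $W^{1,3}$ spaces with weights, and that the index reduction to the classical Morse + Maslov count goes through in this functional-analytic setting.
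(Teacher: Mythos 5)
Your proposal is correct and takes the same route as the paper's cited reference: the paper's own proof is the one-line citation \emph{See Section~3.1 in \cite{AbS1}}, which carries out precisely the universal-moduli-space Sard--Smale argument you describe, and the paper elsewhere addresses your flagged worry about the non-compact base by noting that non-compactness of $\R^n$ affects only the $C^0$-estimates (handled by Lemma~\ref{lem:C0_3}), not the local Fredholm/index analysis. One small stylistic difference is that \cite{AbS1} and the paper work with unweighted $W^{1,r}$ for $2<r<4$ on the half-cylinder, relying on nondegeneracy of $x$ for asymptotic exponential decay (and on $r<4$ so that $W^{1,2}(S^1)\hookrightarrow W^{1-1/r,r}(S^1)$ makes the unstable-manifold boundary condition well posed), rather than building the decay into an exponentially weighted Banach setup as you propose.
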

\begin{proof}
See Section 3.1 in \cite{AbS1}. 
\end{proof}

\begin{lem}\label{lem:Action}
For any $\gamma \in \mca{P}(L^m)$, $x \in \mca{P}(H^m)$ and 
$u \in \mca{M}_{X^m,H^m,J^m}(\gamma,x)$, there holds
\[
\mca{S}_{L^m}(\gamma) \ge \mca{S}_{L^m}(\pi(u(0))) \ge \mca{A}_{H^m}(u(0)) \ge \mca{A}_{H^m}(x).
\]
\end{lem}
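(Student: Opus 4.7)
The plan is to establish the three inequalities separately, each by a standard technique appropriate to the object it relates.

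\textbf{First inequality} $\mca{S}_{L^m}(\gamma) \ge \mca{S}_{L^m}(\pi(u(0)))$. This is purely a statement about the Morse side. Since $X^m$ is the downward pseudo-gradient from Lemma \ref{lem:vectorfield}, $\mca{S}_{L^m}$ is a Lyapunov function for $X^m$. The hypothesis $\pi(u(0)) \in W^u(\gamma:X^m)$ furnishes a flow line $\eta: \R \to \Lambda(\R^n)$ with $\dot\eta = X^m(\eta)$, $\lim_{t\to -\infty}\eta(t)=\gamma$, and $\eta(t_0) = \pi(u(0))$ for some $t_0$. Integrating $d\mca{S}_{L^m}(X^m(\eta(t)))\le 0$ from $-\infty$ to $t_0$ gives the inequality.

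\textbf{Second inequality} $\mca{S}_{L^m}(\pi(u(0))) \ge \mca{A}_{H^m}(u(0))$. This is the Fenchel--Young inequality for the fiberwise Legendre transform. By construction $L^m(t,q,v) = \sup_p \bigl( p\cdot v - H^m(t,q,p) \bigr)$, so for every $(t,q,v,p)$ one has $L^m(t,q,v) + H^m(t,q,p) \ge p\cdot v$. Writing $u(0) = (q(\cdot),p(\cdot))$ and integrating over $S^1$,
\[
\mca{A}_{H^m}(u(0)) = \int_{S^1}\bigl( p\cdot\dot q - H^m(t,q,p)\bigr)\,dt \le \int_{S^1} L^m(t,q,\dot q)\,dt = \mca{S}_{L^m}(\pi(u(0))).
\]
The trace of $u$ on $S^1\times\{0\}$ has enough Sobolev regularity (coming from $u \in W^{1,3}$ on the half-cylinder) for this pointwise-a.e.\ application of Fenchel--Young to be integrated legitimately.

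\textbf{Third inequality} $\mca{A}_{H^m}(u(0)) \ge \mca{A}_{H^m}(x)$. This is the standard action-decrease along Floer trajectories. Direct computation using $\partial_s u = J^m_t(\partial_t u - X_{H^m_t}(u))$ gives
\[
\frac{d}{ds}\mca{A}_{H^m}(u(s)) = -\int_{S^1} g_{J^m_t}(\partial_s u,\partial_s u)\,dt \le 0.
\]
Integrating from $s=0$ to $s=\infty$ and using $\lim_{s\to\infty} u(s) = x$ yields the claim. Convergence of this improper integral is standard for finite-energy Floer trajectories with nondegenerate asymptotic orbits.

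None of the three steps is a serious obstacle; the only mildly delicate point is justifying the trace regularity and integrability used in the second inequality, but since $W^{1,3}$ on the half-cylinder has a well-defined trace on $S^1\times\{0\}$ embedding into a space of continuous functions on $S^1$, the inequality holds pointwise and integrates without difficulty. Concatenating the three inequalities yields the statement.
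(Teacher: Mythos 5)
Your proof is correct and spells out exactly the same three-step argument (Lyapunov monotonicity along the pseudo-gradient flow, Fenchel--Young duality between $L^m$ and $H^m$, and monotonicity of the Hamiltonian action along Floer half-cylinders) that the paper delegates to \cite{AbS1}, p.~299. One small remark on your step 2: the $W^{1,2}$-regularity of the $q$-component of $u(0)$ needed to make $\mca{S}_{L^m}(\pi(u(0)))$ well-defined comes not from the $W^{2/3,3}$ trace estimate but directly from the boundary condition $\pi(u(0)) \in W^u(\gamma:X^m) \subset \Lambda(\R^n) = W^{1,2}(S^1,\R^n)$; the trace estimate is then used only for the $p$-component.
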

\begin{proof}
See pp.299 in \cite{AbS1}. 
\end{proof}

\begin{cor}\label{cor:Action}
When $\mca{S}_{L^m}(\gamma) < \mca{A}_{H^m}(x)$, $\mca{M}_{X^m,H^m,J^m}(\gamma,x)=\emptyset$. 
When $\mca{S}_{L^m}(\gamma) = \mca{A}_{H^m}(x)$, $\mca{M}_{X^m,H^m,J^m}(\gamma,x) \ne \emptyset$ if and only if $\gamma = \pi(x)$. 
In this case, $\mca{M}_{X^m,H^m,J^m}(\gamma,x)$ consists of a single element $u$ such that $u(s,t):=x(t)$. 
\end{cor}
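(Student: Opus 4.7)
The plan is to obtain both assertions as a direct unpacking of Lemma~\ref{lem:Action}, using in addition three sharpness observations: (i) the Lyapunov property of $\mca{S}_{L^m}$ for $X^m$, (ii) sharpness of the fiberwise Legendre inequality coming from strict convexity of $H^m_t$ in the fiber, and (iii) the standard energy identity for Floer half-cylinders.

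The first claim is immediate: if some $u \in \mca{M}_{X^m,H^m,J^m}(\gamma,x)$ existed, Lemma~\ref{lem:Action} would yield $\mca{S}_{L^m}(\gamma) \ge \mca{A}_{H^m}(x)$, contradicting the hypothesis.

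For the second claim, assume $\mca{S}_{L^m}(\gamma) = \mca{A}_{H^m}(x)$ and pick $u \in \mca{M}_{X^m,H^m,J^m}(\gamma,x)$. Then all three inequalities in Lemma~\ref{lem:Action} are equalities, and I would analyze each in turn. First, $\mca{S}_{L^m}(\gamma) = \mca{S}_{L^m}(\pi(u(0)))$ combined with $\pi(u(0)) \in W^u(\gamma:X^m)$ and the Lyapunov property of $\mca{S}_{L^m}$ for $X^m$ forces $\pi(u(0)) = \gamma$: otherwise $\mca{S}_{L^m}$ would strictly decrease along the $X^m$-orbit running from $\gamma$ down to $\pi(u(0))$. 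Second, the equality $\mca{S}_{L^m}(\pi(u(0))) = \mca{A}_{H^m}(u(0))$ combined with the pointwise Legendre inequality $L^m(t,q,v) + H^m(t,q,p) \ge p \cdot v$ (with equality iff $p = \partial_v L^m(t,q,v)$) forces $u(0,t)$ to coincide with the fiberwise Legendre transform of $(\gamma(t),\dot\gamma(t))$ for almost every $t$, hence by smoothness for every $t$. Thus $u(0,\cdot)$ is the Hamiltonian lift of $\gamma$, and in particular lies in $\mca{P}(H^m)$. Third, the standard energy identity $\frac{d}{ds}\mca{A}_{H^m}(u(s,\cdot)) = -\int_{S^1} g_{J^m_t}(\partial_s u,\partial_s u)\,dt$, together with $\mca{A}_{H^m}(u(0)) = \mca{A}_{H^m}(x) = \lim_{s\to\infty}\mca{A}_{H^m}(u(s,\cdot))$, forces $\partial_s u \equiv 0$. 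So $u$ is $s$-independent and $u(0,\cdot) = \lim_{s\to\infty} u(s,\cdot) = x$. Combining these three conclusions, $\pi(x) = \pi(u(0)) = \gamma$ and $u(s,t) = x(t)$.

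For the converse direction, if $\gamma = \pi(x)$ then $u(s,t) := x(t)$ manifestly satisfies the Floer equation (since $\partial_s u = 0$ and $\dot{x} = X_{H^m_t}(x)$), the asymptotic condition $\lim_{s\to\infty} u(s) = x$, and $\pi(u(0)) = \gamma \in W^u(\gamma:X^m)$, using that every critical point lies in its own unstable manifold; uniqueness was already established. I do not foresee a genuine obstacle; the subtlest point is the first equality analysis, which rests on the strict decrease of a Lyapunov function along a nontrivial pseudo-gradient orbit joining two distinct critical points.
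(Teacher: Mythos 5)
The paper states this corollary without proof, deferring to the surrounding framework of \cite{AbS1}; the intended argument is exactly the one you give: read off the three inequalities of Lemma~\ref{lem:Action} and determine the equality case in each. Your unpacking is correct -- the Lyapunov property of $\mca{S}_{L^m}$ along nonconstant $X^m$-orbits handles the first, strict fiberwise convexity of $H^m$ (hence sharpness of the pointwise Legendre inequality) handles the second, and the half-cylinder energy identity handles the third -- and the converse direction is verified correctly, including the observation that a critical point lies in its own unstable manifold. One small technical remark you might add: to pass from a.e.\ equality to everywhere equality in the second step, one uses that $u(0,\cdot)$ is continuous (it lies in $W^{2/3,3}(S^1)\hookrightarrow C^0$ by the trace theorem, or one can invoke interior elliptic regularity combined with boundary continuity), together with smoothness of $\gamma$; this is implicit in your phrase ``hence by smoothness'' but worth spelling out.
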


We recall that our setup differs from the one of \cite{AbS1} inasmuch as our base manifold is $\R^n$, while the authors of \cite{AbS1} work with compact bases. 
However, their analysis applies to our situation for all aspects except for the $C^0$-bounds of Floer moduli spaces. 

Now, we state our third $C^0$ -estimate. It is proved in Section 5. 

\begin{lem}\label{lem:C0_3}
There exists $\varepsilon >0$ such that, if $J^m$ satisfies 
$\sup_t \| J^m_t - J_{\std} \|_{C^0} < \varepsilon$, 
$\mca{M}_{X^m,H^m,J^m}(\gamma,x)$ is $C^0$-bounded for any 
$\gamma \in \mca{P}(L^m)$ and $x \in \mca{P}(H^m)$. 
\end{lem}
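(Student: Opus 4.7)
The plan is to argue by contradiction via a rescaling argument, in the spirit of the proof of Lemma~\ref{lem:orbit}. Suppose there is a sequence $(u_k)_k \subset \mathcal{M}_{X^m, H^m, J^m}(\gamma, x)$ with $R_k := \sup_{(s,t)} |u_k(s,t)| \to \infty$. The two new features compared to Lemma~\ref{lem:orbit} are that the equation is a PDE rather than an ODE and that there is a Lagrangian-type boundary condition $\pi(u(0,\cdot)) \in W^u(\gamma : X^m)$ at $s = 0$.

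I would first establish a uniform bound on the energy of $u_k$ and on the boundary loop $\pi(u_k(0,\cdot))$. By Lemma~\ref{lem:Action},
\[
\mathcal{A}_{H^m}(x) \le \mathcal{A}_{H^m}(u_k(0,\cdot)) \le \mathcal{S}_{L^m}(\pi(u_k(0,\cdot))) \le \mathcal{S}_{L^m}(\gamma),
\]
so the standard Floer energy identity for the half-cylinder $S^1 \times [0,\infty)$ gives $E(u_k) = \mathcal{A}_{H^m}(u_k(0,\cdot)) - \mathcal{A}_{H^m}(x) \le \mathcal{S}_{L^m}(\gamma) - \mathcal{A}_{H^m}(x)$. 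Moreover, $\pi(u_k(0,\cdot))$ lies in $W^u(\gamma : X^m) \cap \{\mathcal{S}_{L^m} \ge \mathcal{A}_{H^m}(x)\}$, which is relatively compact in $\Lambda(\R^n)$ by the Palais--Smale property (Corollary~\ref{cor:PS}) together with the Lyapunov inequality for $X^m$. In particular, $\max_t |\pi(u_k)(0,t)|$ is bounded uniformly in $k$ by Sobolev embedding.

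Next I would rescale: set $v_k(s,t) := u_k(s,t)/R_k$ and $\tilde{H}^k(t,q,p) := H^m(t, R_k q, R_k p)/R_k^2$. Then $v_k$ solves the Floer equation for $(\tilde{H}^k, J^m)$, satisfies $\sup |v_k| = 1$, and, just as in the derivation of (\ref{eq:hjQ}), $\sup_t \|d\tilde{H}^k_t - dQ^{a_m}\|_{C^0} \to 0$ as $k \to \infty$. The rescaled boundary and asymptotic data $\pi(v_k(0,\cdot)) \in W^u(\gamma : X^m)/R_k$ and $\lim_{s \to \infty} v_k(s,\cdot) = x/R_k$ both tend to the zero loop in $C^0$ by the previous paragraph, while $E(v_k) = E(u_k)/R_k^2 \to 0$. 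Using standard elliptic estimates for Floer half-cylinders when $J^m$ is close to $J_{\std}$ (and the boundary regularity from \cite{AbS1}), pass to a subsequence converging in $C^\infty_{\text{loc}}(S^1 \times [0,\infty))$ to a finite-energy solution $v$ of the Floer equation for $(Q^{a_m}, J_{\std})$ with $\pi(v(0,\cdot)) \equiv 0$ and $\lim_{s \to \infty} v(s,\cdot) = 0$.

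Since $a_m \notin \pi\Z$, the argument at the end of Lemma~\ref{lem:orbit} forces $v \equiv 0$, contradicting $\sup|v| = 1$. The main obstacle will be ensuring this final supremum is actually realized in the limit rather than lost to bubbling or to $s \to \infty$. Bubbling is excluded because $T^*\R^n$ is exact symplectic and $E(v_k) \to 0$; escape in the $s$-direction is handled by first shifting $s$ so that the maximum of $|v_k|$ is attained at a finite point $(s_k, t_k)$, and, in the case $s_k \to \infty$, treating the limit as a full cylinder on $\R \times S^1$ and repeating the proof of Lemma~\ref{lem:orbit} verbatim. Establishing these compactness steps uniformly up to the boundary $s=0$, combining the $[a,b)$-action machinery of \cite{AbS1} with the asymptotically-quadratic estimates of \cite{FH}, is the bulk of the technical work deferred to Section~5.
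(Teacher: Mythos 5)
Your proposal is a genuinely different route from the paper's. The paper does not attack Lemma~\ref{lem:C0_3} by a blow-up argument at the level of Floer half-cylinders. Instead it observes that Lemma~\ref{lem:C0_3} is a special case of Lemma~\ref{lem:C0_5}, and proves a single quantitative estimate, Proposition~\ref{prop:C0}, that covers Lemmas~\ref{lem:C0_3}--\ref{lem:C0_5} simultaneously: given an a priori bound on the actions $\mca{A}_{H_s}(u(s))$ and on the trace $\|\pi(u(\sigma))\|_{W^{2/3,3}}$, one gets a $C^0$ bound. That estimate is established by a Sobolev trace extension, a cut-off, the Calder\'on--Zygmund inequality for $\partial_s-J_\std\partial_t$ (whence the explicit threshold $\varepsilon=1/2c$), and the $W^{1,2}$-estimate of Proposition~\ref{prop:L2}. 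The rescaling contradiction that you run at the trajectory level, the paper runs only at the level of loops (Lemma~\ref{lem:L2}) in order to prove Proposition~\ref{prop:L2}; it never needs compactness of Floer trajectories as input. Your approach buys conceptual transparency (one global blow-up argument, energy forced to zero), but it imports Gromov-type compactness on a half-cylinder with a hybrid boundary condition, which is exactly the technical bottleneck the paper avoids by going through the Calder\'on--Zygmund route; and you would have to rerun a separate blow-up argument for Lemmas~\ref{lem:C0_1}, \ref{lem:C0_2}, \ref{lem:C0_4}, \ref{lem:C0_5}, whereas the paper reuses Proposition~\ref{prop:C0} for all of them.

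Two small inaccuracies worth flagging. First, after rescaling, the almost complex structure $J^m_t(R_k v_k)$ does not converge to $J_\std$ in general (unless $J^m$ is asymptotically standard), so the limit Floer cylinder is not for $(Q^{a_m},J_\std)$ but for $(Q^{a_m},J_\infty)$ with some $\omega$-compatible limit $J_\infty$; this is harmless because the zero-energy conclusion $\partial_s v\equiv 0$, and hence $v\in\mca{P}(Q^{a_m})=\{0\}$, holds regardless. Second, your phrase ``repeating the proof of Lemma~\ref{lem:orbit} verbatim'' in the case $s_k\to\infty$ should read: the shifted limit is a full Floer cylinder with zero energy, hence $s$-independent, hence an element of $\mca{P}(Q^{a_m})$, hence $0$. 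Finally, the precompactness of $W^u(\gamma:X^m)\cap\{\mca{S}_{L^m}\ge d\}$ needs Proposition~2.2 and Corollary~2.3 of \cite{AbM} (as in the paper's Lemma~\ref{lem:bddbelow}), not merely Corollary~\ref{cor:PS}; the Palais--Smale condition controls $\|D\mca{S}_L\|$, which is not directly what is relevant on an unstable manifold.
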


Suppose that $J^m$ satisfies the condition in Lemma \ref{lem:Fredholm}, and it is sufficiently close to $J_\std$. 
By Lemma \ref{lem:C0_3}, for any $\gamma \in \mca{P}(L^m)$ and $x \in \mca{P}(H^m)$ such that 
$\ind_{\Morse}(\gamma) - \ind_{\CZ}(x)=0$, 
$\mca{M}_{X^m,H^m,J^m}(\gamma,x)$ is a \textit{compact} $0$-dimensional manifold. 
Then, we can define a homomorphism 
\begin{align*}
&\Psi^m: \CM_*^{[a,b)}(L^m,X^m) \to \CF_*^{[a,b)}(H^m,J^m); \\
&[\gamma] \mapsto \sum_{\ind_{\CZ}(x) = \ind_{\Morse}(\gamma)} \sharp \mca{M}_{X^m,H^m,J^m}(\gamma,x) \cdot [x].
\end{align*}
Corollary \ref{cor:Action} shows that $\Psi^m$ is an isomorphism (for details, see Section 3.5 in \cite{AbS1}). 
Gluing arguments show that $\Psi^m$ is a chain map (for details, see Section 3.5 in \cite{AbS1}). 
Hence $\Psi^m$ induces an isomorphism on homology.

\subsection{Chain level commutativity up to homotopy}
In the previous subsection, we constructed a chain level isomorphism
\[
\Psi^m: \CM_*^{[a,b)}(L^m,X^m) \to \CF_*^{[a,b)}(H^m,J^m)
\]
for every $m$. In this subsection, we show that 
\[
\xymatrix{
\CM_*^{[a,b)}(L^m,X^m) \ar[r]^{\Psi^m}\ar[d]^{\Phi^L} & \CF_*^{[a,b)}(H^m,J^m) \ar[d]^{\Phi^H}\\
\CM_*^{[a,b)}(L^{m+1},X^{m+1}) \ar[r]_{\Psi^{m+1}}& \CF_*^{[a,b)}(H^{m+1},J^{m+1}) \\
}
\]
commutes up to chain homotopy, where $\Phi^H$ and $\Phi^L$ are chain maps constructed in 
Section 2.3 and Section 3.3, respectively. 

To prove this, we introduce a chain map 
\begin{align*}
\Theta&: \CM_*^{[a,b)}(L^m,X^m) \to \CF_{*}^{[a,b)}(H^{m+1},J^{m+1}); \\
&[\gamma] \mapsto \sum_{\ind_{\Morse}(\gamma)=\ind_{\CZ}(x)} \sharp \mca{M}_{X^m,H^{m+1}, J^{m+1}}(\gamma, x) \cdot [x].
\end{align*}
It is enough to show $\Phi^H \circ \Psi^m \sim \Theta \sim \Psi^{m+1} \circ \Phi^L$.
($\sim$ means chain homotopic.)

First we show that $\Psi^{m+1} \circ \Phi^L \sim \Theta$. 
For any $\gamma \in \mca{P}(L^m)$ and $x \in \mca{P}(H^{m+1})$, 
$\mca{N}^0(\gamma,x)$ denotes the set of $(\alpha,u,v)$, where 
\[
\alpha \in [0,\infty), \quad
u: [0,\alpha] \to \Lambda(\R^n), \quad 
v \in W^{1,3}([0,\infty) \times S^1, T^*\R^n)
\]
which satisfy the following conditions: 
\begin{align*}
&u(0) \in W^u(\gamma:X^m), \qquad  u(s) = \varphi^{X^{m+1}}_s(u(0))\,(0 \le s \le \alpha), \\
&\partial_s v - J^{m+1}_t(\partial_t v - X_{H^{m+1}_t}(v))=0, \quad \pi(v(0))= u(\alpha), \quad \lim_{s \to \infty} v(s)=x.
\end{align*}

We state our fourth $C^0$-estimate. It is proved in Section 5. 

\begin{lem}\label{lem:C0_4}
There exists $\varepsilon>0$ which satisfies the following property:
\begin{quote}
If $J^{m+1}$ satisfies 
$\sup_t \| J^{m+1}_t - J_{\std} \|_{C^0} < \varepsilon$, 
$\| v \|_{C^0}$ is uniformly bounded for any $(\alpha,u,v) \in \mca{N}^0(\gamma,x)$, where 
$\gamma \in \mca{P}(L^m)$ and $x \in \mca{P}(H^{m+1})$.
\end{quote}
\end{lem}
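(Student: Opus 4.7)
The plan is to reduce Lemma \ref{lem:C0_4} to Lemma \ref{lem:C0_3} by first establishing an a priori action bound on $v(0)$, and then invoking the $C^0$-estimate for ordinary hybrid Floer trajectories. The core point is that the extra ingredient in $\mca{N}^0(\gamma,x)$ compared with $\mca{M}_{X^{m+1},H^{m+1},J^{m+1}}(\pi(v(0)),x)$ is merely a finite-length $X^{m+1}$-flow segment $u$ in $\Lambda(\R^n)$, which cannot increase the Lagrangian action. Hence, once the action of $v(0)$ is bounded above in terms of $\mca{S}_{L^m}(\gamma)$, the $C^0$-estimate reduces to the one proved already in Section 5 for Lemma \ref{lem:C0_3}.

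The key action inequality is obtained as follows. Since $u$ is a downward pseudo-gradient trajectory of $\mca{S}_{L^{m+1}}$ on $[0,\alpha]$,
\[
\mca{S}_{L^{m+1}}(u(\alpha)) \le \mca{S}_{L^{m+1}}(u(0)).
\]
Because $(L^m)_m$ is strictly decreasing, $L^{m+1} < L^m$ pointwise, so $\mca{S}_{L^{m+1}}(u(0)) \le \mca{S}_{L^m}(u(0))$. Combining with the fact that $u(0) \in W^u(\gamma:X^m)$ gives $\mca{S}_{L^m}(u(0)) \le \mca{S}_{L^m}(\gamma)$. Next, the fiberwise Legendre duality between $H^{m+1}$ and $L^{m+1}$ yields the pointwise inequality $\mca{A}_{H^{m+1}}(y) \le \mca{S}_{L^{m+1}}(\pi(y))$ for any $y \in C^\infty(S^1,T^*\R^n)$ (this is the first half of Lemma \ref{lem:Action}, which is purely algebraic), so applied to $y = v(0)$ with $\pi(v(0)) = u(\alpha)$ we obtain
\[
\mca{A}_{H^{m+1}}(v(0)) \le \mca{S}_{L^{m+1}}(u(\alpha)) \le \mca{S}_{L^m}(\gamma).
\]
On the other hand, the standard energy identity applied to the Floer half-cylinder $v$ gives $\mca{A}_{H^{m+1}}(v(0)) - \mca{A}_{H^{m+1}}(x) = E(v) \ge 0$, producing the matching lower bound $\mca{A}_{H^{m+1}}(v(0)) \ge \mca{A}_{H^{m+1}}(x)$. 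Thus both the action and the energy of $v$ are bounded uniformly in $(\alpha,u,v) \in \mca{N}^0(\gamma,x)$.

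With these uniform action/energy bounds in hand, $v$ is a solution of the Floer equation on $[0,\infty) \times S^1$ for the admissible Hamiltonian $H^{m+1}$ and the almost complex structure $J^{m+1}$ close to $J_\std$, with a controlled asymptotic limit at $s=+\infty$ and a controlled action at $s=0$. This is precisely the situation addressed by Lemma \ref{lem:C0_3}: the $C^0$-estimate of Section 5 for half-cylinder Floer trajectories depends only on the Hamiltonian being asymptotically quadratic in the sense of (H1), on $J^{m+1}$ being sufficiently close to $J_\std$, and on uniform bounds for the action at the boundary $s=0$ and at the limit $s=\infty$. It does not use the specific form of the boundary condition $\pi(v(0)) \in W^u(\gamma:X^m)$ beyond extracting such an action bound. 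Hence the same $\varepsilon>0$ that works for Lemma \ref{lem:C0_3} works here.

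The main obstacle is conceptual rather than technical: it is ensuring that the proof of Lemma \ref{lem:C0_3} in Section 5 is truly formulated so as to take only the action bound at $s=0$ as input, without exploiting the structure of the unstable manifold condition. Once that is made explicit, the present lemma follows by the two-line action chain above. In the writeup, I would therefore phrase the Section 5 proof of Lemma \ref{lem:C0_3} as an a priori estimate on half-cylinder Floer solutions with prescribed asymptotic orbit and a prescribed upper bound on $\mca{A}_{H^{m+1}}(v(0))$, and then derive Lemma \ref{lem:C0_4} as an immediate corollary using the action computation above.
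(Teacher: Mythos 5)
Your action chain is correct and matches the one in the paper: for $(\alpha,u,v) \in \mca{N}^0(\gamma,x)$ one indeed has $\mca{S}_{L^m}(\gamma) \ge \mca{S}_{L^m}(u(0)) \ge \mca{S}_{L^{m+1}}(u(0)) \ge \mca{S}_{L^{m+1}}(u(\alpha)) \ge \mca{A}_{H^{m+1}}(v(0)) \ge \mca{A}_{H^{m+1}}(x)$, so the action of $v(s)$ is uniformly bounded. The gap is in the next step, where you assert that the $C^0$-estimate underlying Lemma \ref{lem:C0_3} ``does not use the specific form of the boundary condition beyond extracting such an action bound.'' This is not the case. Lemma \ref{lem:C0_3} is derived from Proposition \ref{prop:C0}, whose hypotheses include not only $\sup_{s \ge \sigma}\lvert \mca{A}_{H_s}(u(s))\rvert \le M_0$ but also the separate bound $\|\pi(u(\sigma))\|_{W^{2/3,3}(S^1,\R^n)} \le M_1$. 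This Sobolev-trace bound on the $q$-component of the boundary loop is indispensable: in the proof of Proposition \ref{prop:C0} it is used to produce the $W^{1,3}$-extension $\tilde{u}_q$ and thereby obtain a Calderon--Zygmund estimate near $s=\sigma$. An action bound by itself does not control any Sobolev norm of $\pi(v(0))$; a loop in $\Lambda(\R^n)$ can have small $\mca{S}_{L^{m+1}}$ while having huge $W^{1,2}$-norm, and the Palais--Smale argument (Lemma \ref{lem:PS}) needs a bound on $\|D\mca{S}_L\|$ in addition to the action, which you do not have for $u(\alpha)$.

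This is exactly where the paper introduces Lemma \ref{lem:bddbelow}: for a fixed $\gamma \in \mca{P}(L^m)$ and $d \in \R$, the set $\varphi^{X^{m+1}}_{[0,\infty)}(W^u(\gamma:X^m)) \cap \{\mca{S}_{L^{m+1}} \ge d\}$ is precompact in $\Lambda(\R^n)$. This is the genuine additional content of Lemma \ref{lem:C0_4} compared to Lemma \ref{lem:C0_3}: the boundary loop $\pi(v(0)) = u(\alpha)$ is reached by flowing $u(0) \in W^u(\gamma:X^m)$ forward for time $\alpha$ under $X^{m+1}$ (a \emph{different} pseudo-gradient from $X^m$), with $\alpha$ ranging over all of $[0,\infty)$; precompactness of this a priori non-compact set is what gives a uniform $W^{1,2}$-bound on $u(\alpha)$, hence a $W^{2/3,3}$-bound via Sobolev embedding, and only then does Proposition \ref{prop:C0} apply. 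Your proposed fix (``phrase the Section~5 proof so it takes only the action bound at $s=0$ as input'') would not close this gap, because the $W^{2/3,3}$ control at the boundary cannot be removed from the hypotheses without breaking the boundary regularity argument. The correct repair is to insert Lemma \ref{lem:bddbelow} (which rests on Proposition 2.2 and Corollary 2.3 of \cite{AbM}) before invoking Proposition \ref{prop:C0}.
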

 
Suppose that $J^m$ is generic and sufficiently close to $J_\std$. Then, due to Lemma \ref{lem:C0_4} and gluing arguments, 
the following holds: 

\begin{itemize}
\item When $\ind_\Morse (\gamma) - \ind_\CZ(x)=-1$, $\mca{N}^0(\gamma,x)$ is a compact $0$-dimensional manifold. 
Every $(\alpha,u,v) \in \mca{N}^0(\gamma,x)$ satisfies $\alpha>0$. 
\item When $\ind_\Morse (\gamma) - \ind_\CZ(x)=0$, $\mca{N}^0(\gamma,x)$ is a $1$-dimensional manifold with boundary. 
Its boundary is $\{\alpha=0\}$, and its end is compactified by the following moduli spaces
(we set $k:= \ind_\Morse (\gamma) = \ind_\CZ(x)$): 
\begin{align*}
 & \bar{\mca{M}}_{X^m}(\gamma,\gamma') \times \mca{N}^0(\gamma',x)  \quad (\gamma' \in \mca{P}(L^m), \ind_\Morse(\gamma')=k-1), \\
 & \mca{M}_{X^m,X^{m+1}}(\gamma,\gamma') \times \mca{M}_{X^{m+1},H^{m+1},J^{m+1}}(\gamma',x)  \\
 &\qquad \qquad \qquad\qquad\qquad\qquad ( \gamma' \in \mca{P}(L^{m+1}), \ind_\Morse(\gamma')=k ), \\
 & \mca{N}^0(\gamma,x') \times \bar{\mca{M}}_{H^{m+1},J^{m+1}}(x',x) \quad (x' \in \mca{P}(H^{m+1}), \ind_\CZ(x') = k+1 ) . 
\end{align*}
\end{itemize}

Let us define $K^0:\CM_*^{<a}(L^m,X^m) \to \CF_{*+1}^{<a}(H^{m+1},J^{m+1})$ by
\[
K^0[\gamma]:= \sum_{\ind_{\CZ}(x)=\ind_{\Morse}(\gamma)+1} \sharp \mca{N}^0(\gamma,x) \cdot [x].
\]
Then, the above results show that 
$\partial_{H^{m+1},J^{m+1}} \circ K^0 + K^0 \circ \partial_{L^m,X^m} = \Psi^{m+1} \circ \Phi^L + \Theta$. 

Next we show that $\Phi^H \circ \Psi^m \sim \Theta$. 
Let $H \in C^\infty(\R \times S^1 \times T^*\R^n)$ be a homotopy from $H^m$ to $H^{m+1}$, and 
$J=(J_{s,t})_{(s,t) \in \R \times S^1}$ be a homotopy from $J^m$ to $J^{m+1}$.
By (HH1) and (JJ1), there exists $s_0>0$ such that
\[
(H_{s,t},J_{s,t}) = \begin{cases} (H^m_t, J^m_t) &( s \le -s_0) \\ (H^{m+1}_t, J^{m+1}_t) &( s \ge s_0) \end{cases}.
\]
 
For any $\gamma \in \mca{P}(L^m)$ and $x \in \mca{P}(H^{m+1})$, $\mca{N}^1(\gamma,x)$ denotes the set of $(\beta,w)$, where 
\[
\beta \in (-\infty, s_0], \qquad 
w \in W^{1,3}([\beta,\infty) \times S^1, T^*\R^n)
\]
which satisfy the following properties: 
\begin{align*}
\pi(w(\beta))& \in W^u(\gamma:X^m), \qquad
\partial_s w - J_{s,t}(\partial_t w - X_{H_{s,t}}(w))=0, \\
\lim_{s \to \infty}w(s)&=x.
\end{align*}

Now we state our fifth $C^0$- estimate. It is proved in Section 5. 

\begin{lem}\label{lem:C0_5}
There exists $\varepsilon >0$ which satisfies the following property:
\begin{quote}
If $J$ satisfies $\sup_{s,t} \| J_{s,t} - J_{\std} \|_{C^0} < \varepsilon$, 
$\|w \|_{C^0}$ is uniformly bounded 
for any 
$(\beta ,w) \in \mca{N}_1(\gamma,x)$, where $\gamma \in \mca{P}(L^m)$, $x \in \mca{P}(H^{m+1})$. 
\end{quote}
\end{lem}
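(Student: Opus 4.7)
The strategy is to combine the hybrid technique used for Lemma \ref{lem:C0_4} with the monotone-homotopy adaptation used for Lemma \ref{lem:C0_2}; both rest on the $C^0$-estimates of \cite{FH} for Floer trajectories on $T^*\R^n$ and on the Lagrangian action estimates of \cite{AbS1}. Fix throughout $\gamma \in \mca{P}(L^m)$ and $x \in \mca{P}(H^{m+1})$, and let $(\beta,w) \in \mca{N}^1(\gamma,x)$.

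I would begin by controlling the boundary loop $w(\beta)$. Since $\pi(w(\beta)) \in W^u(\gamma:X^m)$ and $X^m$ is a downward pseudo-gradient for $\mca{S}_{L^m}$ by Lemma \ref{lem:vectorfield}(2), one has $\mca{S}_{L^m}(\pi(w(\beta))) \le \mca{S}_{L^m}(\gamma)$; Lemma \ref{lem:PS} then yields a uniform $W^{1,2}$-bound, and hence a uniform $C^0$-bound, on $\pi(w(\beta))$. The hybrid action inequality analogous to Lemma \ref{lem:Action} gives $\mca{A}_{H^m}(w(\beta)) \le \mca{S}_{L^m}(\pi(w(\beta))) \le \mca{S}_{L^m}(\gamma)$, so combining with the action--energy identity
\[
\mca{A}_{H^m}(w(\beta)) - \mca{A}_{H^{m+1}}(x) = \int_\beta^\infty \int_{S^1} \bigl( |\partial_s w|^2 + (\partial_s H)(s,t,w) \bigr) \, dt \, ds,
\]
whose right-hand side is nonnegative by (HH2), one obtains uniform bounds on the total energy $E(w)$ and on the intermediate actions $\mca{A}_{H_{s, \cdot}}(w(s, \cdot))$, depending only on $\mca{S}_{L^m}(\gamma)$ and $\mca{A}_{H^{m+1}}(x)$.

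The main obstacle is upgrading these global integral bounds to a pointwise $C^0$-bound over the non-compact target. Here I would adapt the Frauenfelder--Hofer argument of \cite{FH}: since $H_{s,t}$ is a compact perturbation of the quadratic Hamiltonian $Q^{a(s)}$ by (HH3) and $J_{s,t}$ is $C^0$-close to $J_{\std}$, a direct computation shows that $\phi := |w|^2/2$ satisfies a linear differential inequality of the form $\Delta \phi \ge -C_1 \phi - C_2$ outside a sufficiently large compact set, with $C_1, C_2$ depending only on the $C^1$-bounds in (HH3) and on $\varepsilon$. A maximum principle, combined with the asymptotic condition $w(s) \to x$ as $s \to \infty$, then reduces the problem to bounding $w$ on the initial slice $\{s=\beta\}$. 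The delicate point, which I expect to be the main technical obstacle, is that the boundary condition $\pi(w(\beta)) \in W^u(\gamma:X^m)$ controls only the $q$-component of $w(\beta)$; the $p$-component must be bounded via the action estimate $\mca{A}_{H^m}(w(\beta)) \le \mca{S}_{L^m}(\gamma)$ together with the fibrewise Legendre duality between $H^m$ and $L^m$ and the quadratic growth condition (H1), exactly as in the proof of Lemma \ref{lem:C0_3}. Once this slicewise bound is established, the maximum principle propagates it to all $s \ge \beta$, uniformly in $(\beta,w) \in \mca{N}^1(\gamma,x)$.
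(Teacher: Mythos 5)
Your action-estimate chain is essentially the paper's: the paper writes
\[
\mca{S}_{L^m}(\gamma) \ge \mca{S}_{L^m}(\pi(w(\beta))) \ge \mca{A}_{H^m}(w(\beta)) \ge \mca{A}_{H_\beta}(w(\beta)) \ge \mca{A}_{H^{m+1}}(x),
\]
using the usual monotonicity of $\mca{A}_{H_s}(w(s))$ under (HH2), and this is the entire content of the paper's (short) proof once Proposition \ref{prop:C0} is granted. You have that part right, modulo a small notational slip: your action--energy identity should start from $\mca{A}_{H_\beta}(w(\beta))$, not $\mca{A}_{H^m}(w(\beta))$; the two are related by $H^m \le H_\beta$, so the conclusion is unaffected.

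However, two points diverge from what the paper actually does, and one of them is a genuine gap. First, the $W^{1,2}$-bound on $\pi(w(\beta))$ does not follow from Lemma \ref{lem:PS}: that lemma needs a uniform bound on $\| D\mca{S}_{L^m} \|_{W^{1,2}}$, which you do not have at $\pi(w(\beta))$. What the paper uses instead is that $\pi(w(\beta))$ lies on the unstable manifold $W^u(\gamma:X^m)$ of a single nondegenerate critical point, and that $\mca{S}_{L^m}(\pi(w(\beta)))$ is pinched between $\mca{A}_{H^{m+1}}(x)$ and $\mca{S}_{L^m}(\gamma)$; precompactness of such sublevel sets of unstable manifolds (the content of Lemma \ref{lem:bddbelow}, really Corollary 2.3 in \cite{AbM}) gives the $W^{1,2}$-bound. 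Second, and more seriously, the maximum-principle route you propose is not the paper's, and I don't think it works here: the differential inequality $\Delta\phi \ge -C_1\phi - C_2$ is too weak to yield any $C^0$-bound via the maximum principle, and for the Floer--Hofer class of Hamiltonians ($H_{s,t}$ close to the quadratic $Q^{a(s)}$, whose flow is a rotation of $\C^n$) one cannot arrange $\Delta(|w|^2/2) \ge 0$ outside a compact set. The mechanism that makes the $C^0$-bound true is the non-resonance condition $a(s) \notin \pi\Z$, and it enters through the \emph{integral} estimate of Lemma \ref{lem:L2} (a compactness/contradiction argument on rescaled loops), from which Proposition \ref{prop:L2} produces a local $W^{1,2}$-bound. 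The paper then upgrades that to $C^0$ on the boundary strip via a Sobolev-trace extension of $w_q(\beta,\cdot)$, a cutoff to kill the extended $q$-component at $s=\beta$, and the Calder\'on--Zygmund inequality on the half-cylinder; this is also precisely how the paper handles the ``$p$-component on the boundary slice'' issue you correctly flagged but left vague. In short, the reduction to Proposition \ref{prop:C0} in your proposal is on target, but the analytic core you sketch in its place would need to be replaced by the integral $W^{1,2}$-estimate plus local elliptic bootstrap that the paper actually uses.
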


Suppose that $J$ is generic and sufficiently close to $J_\std$. Then, by Lemma \ref{lem:C0_5} and gluing arguments, 
the following holds: 

\begin{itemize}
\item When $\ind_\Morse(\gamma) - \ind_\CZ(x) =-1$, $\mca{N}^1(\gamma,x)$ is a compact $0$-dimensional manifold. 
Every $(\beta,w) \in \mca{N}^1(\gamma,x)$ satisfies $\beta <s_0$. 
\item When $\ind_\Morse(\gamma) - \ind_\CZ(x)=0$, $\mca{N}^1(\gamma,x)$ is a $1$-dimensional manifold with boundary. 
Its boundary is $\{\beta=s_0\}$, and its ends are compactified by the following moduli spaces
(we set $k:=\ind_\Morse(\gamma) = \ind_\CZ(x)$): 
\begin{align*}
&\bar{\mca{M}}_{X^m}(\gamma,\gamma') \times \mca{N}^1(\gamma',x) \quad (\gamma' \in \mca{P}(L^m), \ind_\Morse(\gamma')=k-1), \\
&\mca{M}_{X^m,H^m,J^m}(\gamma,x') \times \mca{M}_{H,J}(x',x) \quad( x' \in \mca{P}(H^m), \ind_\CZ(x') = k), \\
&\mca{N}^1(\gamma,x') \times \bar{\mca{M}}_{H^{m+1},J^{m+1}}(x',x) \quad( x' \in \mca{P}(H^{m+1}), \ind_{\CZ}(x')=k+1). 
\end{align*} 
\end{itemize}

Let us define $K^1:\CM_*^{[a,b)}(L^m,X^m) \to \CF_{*+1}^{[a,b)}(H^{m+1},J^{m+1})$ by
\[
K^1[\gamma]:= \sum_{\ind_{\CZ}(x)=\ind_{\Morse}(\gamma)+1} \sharp \mca{N}^1(\gamma,x) \cdot [x].
\]
Then, the above results show that 
$\partial_{H^{m+1},J^{m+1}} \circ K^1 + K^1 \circ \partial_{L^m,X^m} = \Theta + \Phi^H \circ \Psi^m$.

\subsection{Proof of Lemma \ref{lem:excision}}

Finally, we prove Lemma \ref{lem:excision}. 
Through this section, $V$ denotes a bounded domain in $\R^n$ with smooth boundary. 
First we need the following lemma: 

\begin{lem}\label{lem:excision-1}
For any open neighborhood $W$ of $\bar{V}$ and 
$b>0$, the natural homomorphism 
\[
H_*(\Lambda^{<b}(W), \Lambda^{<b}(W) \setminus \Lambda(\bar{V})) \to
H_*(\Lambda^{<b}(W), \Lambda^{<b}(W) \setminus \Lambda(V))
\]
is an isomorphism. 
\end{lem}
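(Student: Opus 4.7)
Set $X := \Lambda^{<b}(W)$, $A := X \setminus \Lambda(V)$, $B := X \setminus \Lambda(\bar{V})$; since $\Lambda(V) \subset \Lambda(\bar{V})$, $B \subset A \subset X$. By the long exact sequence of the triple $(X,A,B)$, it suffices to prove $H_*(A,B) = 0$. My plan is to exhibit a continuous homotopy $F : A \times [0,1] \to A$ with $F(\cdot, 0) = \id_A$, $F(A \times \{1\}) \subset B$, and $F(B \times [0,1]) \subset B$. Such an $F$ is a homotopy of pairs between $\id_{(A,B)}$ and a self-map of $(A,B)$ that factors through $(B,B)$, which forces $H_*(A,B) = 0$.

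The homotopy will be built from a small outward push-off across $\partial V$. Since $\partial V$ is smooth and lies in the open set $W$, choose a tubular neighborhood $U \subset W$ of $\partial V$ with coordinates $\Phi : \partial V \times (-\epsilon, \epsilon) \xrightarrow{\cong} U$ in which $s > 0$ represents the outward normal direction relative to $V$. Fix a cut-off $\eta \in C^\infty(\R, [0,1])$ with $\eta(0) = 1$ and $\supp \eta \subset (-\epsilon/2, \epsilon/2)$, and for small $\delta \geq 0$ define $\psi_\delta : \R^n \to \R^n$ by $\psi_\delta(\Phi(y,s)) := \Phi(y, s + \delta\eta(s))$ on $U$ and $\psi_\delta := \id$ elsewhere. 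For sufficiently small $\delta$, $\psi_\delta$ is a diffeomorphism of $\R^n$ restricting to a self-diffeomorphism of $W$. The key geometric fact, straightforward to verify in coordinates, is that for every $\delta > 0$ one has $\psi_\delta(W \setminus V) \subset W \setminus \bar{V}$: points of $\partial V$ move to strictly positive normal coordinate $\delta$, while points already outside $\bar{V}$ can only move further out.

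Consequently, $\gamma \in A$ automatically yields $\psi_\delta \circ \gamma \in \Lambda(W) \setminus \Lambda(\bar{V})$ for every $\delta > 0$, and the only remaining issue is to preserve the length bound $\ell(\gamma) < b$, where $\ell$ denotes the length functional. The standard estimate $\|d\psi_\delta - \id\|_{C^0} = O(\delta)$ yields $\ell(\psi_\delta \circ \gamma) \leq (1 + C\delta)\,\ell(\gamma)$, so I would set
\[
\delta(\gamma) := \min\!\left( \delta_0,\ \mu\, \frac{b - \ell(\gamma)}{1 + \ell(\gamma)} \right)
\]
with $\delta_0, \mu > 0$ chosen small enough to guarantee $\ell(\psi_{t\delta(\gamma)} \circ \gamma) < b$ for all $t \in [0,1]$. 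Since $\ell$ is continuous on $\Lambda(\R^n)$ and $\ell(\gamma) < b$ on $A$, this $\delta(\gamma)$ is continuous and strictly positive. Setting $F(\gamma, t) := \psi_{t\delta(\gamma)} \circ \gamma$, the three required properties follow by inspection; $W^{1,2}$-continuity of $F$ is routine because $\psi_\delta$ is smooth with derivatives uniformly bounded in the small-$\delta$ regime.

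The genuine tension in the argument is that the single scale parameter $\delta$ must simultaneously (i) push the loop strictly outside $\bar{V}$, (ii) keep its image inside $W$, and (iii) preserve the length bound $b$. Properties (i) and (ii) hold for every small $\delta > 0$ thanks to the support condition on $\eta$ and the inclusion $U \subset W$, so the real obstacle is (iii), which forces $\delta(\gamma)$ to shrink continuously as $\ell(\gamma) \uparrow b$; the explicit formula above is precisely what reconciles (iii) with the continuity of $F$.
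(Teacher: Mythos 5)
Your proof is correct, but takes a genuinely different route from the paper's. The paper does not attempt a global deformation of the pair $(A,B)$. Instead it argues directly at the chain level: given a relative cycle $\alpha = \sum_i c_i \alpha_i$ with each $\alpha_i : \Delta^k \to \Lambda^{<b}(W)\setminus\Lambda(V)$, compactness of $\Delta^k$ produces a single $b' < b$ with $\alpha_i(\Delta^k) \subset \Lambda^{<b'}(W)$ for all $i$. One then picks a compactly supported vector field $Z$ on $W$ pointing outward along $\partial V$, and for a single sufficiently small $\delta > 0$ (chosen after $b'$) the chain homotopy $\alpha^t_i := \varphi^Z_{\delta t} \circ \alpha_i$ stays in $\Lambda^{<b}(W)$, preserves the subspace condition for $\partial\alpha$, and lands in $\Lambda^{<b}(W)\setminus\Lambda(\bar V)$ at $t=1$; hence $[\alpha] = 0$. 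The entire difficulty you isolate — forcing the push-off scale to shrink continuously as $\ell(\gamma)\uparrow b$, via the explicit $\delta(\gamma)$ — simply does not arise in the paper, because for a fixed chain the gap $b - b'$ is uniform. What your approach buys is a stronger statement (an honest homotopy of pairs from $\id_{(A,B)}$ to a map through $B$), at the cost of having to verify the joint continuity of $(\delta,\gamma)\mapsto\psi_\delta\circ\gamma$ in the $W^{1,2}$ topology and manage the $\gamma$-dependent cut-off; what the paper's approach buys is economy, since compactness of the simplex does all the uniformization for free. Both are legitimate proofs of the lemma.
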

\begin{proof}
This is equivalent to showing that 
$H_*(\Lambda^{<b}(W) \setminus \Lambda(V), \Lambda^{<b}(W) \setminus \Lambda(\bar{V}))=0$. 

Let us take a $k$-dimensional singular chain 
$\alpha=\sum_i  c_i \alpha_i  \in C_k(\Lambda^{<b}(W) \setminus \Lambda(V))$
($c_i \in \Z_2$, $\alpha_i: \Delta^k \to \Lambda^{<b}(W) \setminus \Lambda(V)$ are continuous maps) such that 
$\partial \alpha \in C_{k-1}(\Lambda^{<b}(W) \setminus \Lambda(\bar{V}))$. 
Since $\Delta^k$ is compact, there exists $b'<b$ such that $\alpha_i(\Delta^k) \subset \Lambda^{<b'}(W)$ for all $i$. 

Let us take a compactly supported smooth vector field $Z$ on $W$, which points outwards on 
$\partial V$. Let $(\varphi^Z_t)_{t \in \R}$ be the isotopy on $W$ generated by $Z$, i.e. 
$\varphi^Z_0=\id_W$, $\partial_t \varphi^Z_t = Z(\varphi^Z_t)$. 
Take $\delta>0$ and define $\alpha^t_i: \Delta^k \to \Lambda(W)$ by 
$\alpha^t_i(p):= \varphi^Z_{\delta t} \circ \alpha_i(p)\,(\forall p \in \Delta^k)$. 
When $\delta>0$ is sufficiently small, $\alpha^t_i(\Delta^k) \subset \Lambda^{<b}(W) $ for any $i$ and $0 \le t \le 1$. 

It is easy to see that $\alpha^t_i$ satisfies the following properties for any $i$ and $0 \le t \le 1$:
\begin{itemize}
\item $\alpha^0_i = \alpha_i$.  
\item $\alpha^t:= \sum_i c_i \alpha^t_i$ satisfies 
$\alpha^t \in C_k(\Lambda^{<b}(W) \setminus \Lambda(V))$ and 
$\partial \alpha^t \in C_{k-1}(\Lambda^{<b}(W) \setminus \Lambda(\bar{V}))$ for any $0 \le t \le 1$. 
\item $\alpha^1 \in C_k(\Lambda^{<b}(W) \setminus \Lambda(\bar{V}))$. 
\end{itemize} 
Then we obtain $[\alpha]=[\alpha^0]=[\alpha^1]=0$ in $H_k(\Lambda^{<b}(W) \setminus \Lambda(V), \Lambda^{<b}(W) \setminus \Lambda(\bar{V}))$. 
\end{proof}

\begin{cor}\label{cor:excision-1}
For any open neighborhood $W$ of $\bar{V}$, the natural homomorphism 
\[
H_*(\Lambda^{<b}(W), \Lambda^{<b}(W) \setminus \Lambda(V)) \to
H_*(\Lambda^{<b}(\R^n), \Lambda^{<b}(\R^n) \setminus \Lambda(V))
\]
is an isomorphism.
\end{cor}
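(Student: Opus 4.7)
The plan is to combine Lemma~\ref{lem:excision-1} with the excision axiom in singular homology. Set
\[
U := \bigl\{ \gamma \in \Lambda^{<b}(\R^n) \mid \gamma(S^1) \not\subset W \bigr\},
\]
so that $\Lambda^{<b}(\R^n) \setminus U = \Lambda^{<b}(W)$. Since $\bar V \subset W$, every $\gamma \in U$ has $\gamma(S^1) \not\subset \bar V$; in particular $U \cap \Lambda(\bar V) = \emptyset$.

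First, I would fit the map of the statement into the commutative square
\[
\xymatrix{
H_*\bigl(\Lambda^{<b}(W), \Lambda^{<b}(W) \setminus \Lambda(\bar V)\bigr) \ar[r] \ar[d]_{\cong} & H_*\bigl(\Lambda^{<b}(\R^n), \Lambda^{<b}(\R^n) \setminus \Lambda(\bar V)\bigr) \ar[d]^{\cong} \\
H_*\bigl(\Lambda^{<b}(W), \Lambda^{<b}(W) \setminus \Lambda(V)\bigr) \ar[r] & H_*\bigl(\Lambda^{<b}(\R^n), \Lambda^{<b}(\R^n) \setminus \Lambda(V)\bigr)
}
\]
whose vertical arrows are isomorphisms by Lemma~\ref{lem:excision-1}, applied with the open neighborhoods $W$ and $\R^n$ of $\bar V$ respectively. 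It therefore suffices to prove that the top horizontal arrow is an isomorphism.

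Second, I would apply the excision axiom to the pair
\[
(X, A) := \bigl(\Lambda^{<b}(\R^n),\ \Lambda^{<b}(\R^n) \setminus \Lambda(\bar V)\bigr)
\]
with the subset $U$ above. The continuous Sobolev embedding $W^{1,2}(S^1, \R^n) \hookrightarrow C^0(S^1, \R^n)$ implies two key facts: $\Lambda(\bar V)$ is closed in $\Lambda^{<b}(\R^n)$ (a $C^0$-limit of loops in $\bar V$ lies in $\bar V$), so $A$ is open and $\mathrm{int}(A) = A$; and $U$ is closed, by a standard subsequence argument on exit times $t_n \in S^1$ with $\gamma_n(t_n) \notin W$, using compactness of $S^1$ together with closedness of $\R^n \setminus W$. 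Combined with $U \subset A$ (immediate from $\bar V \subset W$), this gives $\overline U = U \subset A = \mathrm{int}(A)$, so excision yields the isomorphism $H_*(X \setminus U, A \setminus U) \cong H_*(X, A)$, which is exactly the top arrow.

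The only mild technical obstacle is verifying these $W^{1,2}$-topological facts about $U$ and $\Lambda(\bar V)$, and this is precisely why Lemma~\ref{lem:excision-1} is invoked first: passing to the complement of $\Lambda(\bar V)$ rather than of $\Lambda(V)$ makes the relevant subspace of $\Lambda^{<b}(\R^n)$ \emph{open}, which is exactly what the excision axiom requires, bypassing any further perturbation argument of the type carried out in the proof of Lemma~\ref{lem:excision-1}.
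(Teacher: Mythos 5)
Your argument is correct and follows exactly the same route as the paper: the same commutative square, with Lemma~\ref{lem:excision-1} giving the vertical isomorphisms and excision giving the top arrow. The only difference is that you spell out the verification of the excision hypotheses ($U$ closed, $\Lambda^{<b}(\R^n)\setminus\Lambda(\bar V)$ open, $\overline U\subset\mathrm{int}(A)$), which the paper dispatches with the single phrase ``by excision.''
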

\begin{proof}
Consider the following commutative diagram: 
\[
\xymatrix{
H_*(\Lambda^{<b}(W), \Lambda^{<b}(W) \setminus \Lambda(\bar{V})) \ar[r] \ar[d] &H_*(\Lambda^{<b}(\R^n), \Lambda^{<b}(\R^n) \setminus \Lambda(\bar{V}))  \ar[d] \\
H_*(\Lambda^{<b}(W), \Lambda^{<b}(W) \setminus \Lambda(V)) \ar[r] & H_*(\Lambda^{<b}(\R^n), \Lambda^{<b}(\R^n) \setminus \Lambda(V))
}
\]
Then, vertical arrows are isomorphism by Lemma \ref{lem:excision-1}, and the top arrow is an isomorphism by excision. 
Therefore the bottom arrow is an isomorphism. 
\end{proof} 

Applying Lemma \ref{lem:excision-1} with $W=\R^n$, 
\[
H_*(\Lambda^{<b}(\R^n), \Lambda^{<b}(\R^n) \setminus \Lambda(\bar{V})) \to
H_*(\Lambda^{<b}(\R^n), \Lambda^{<b}(\R^n) \setminus \Lambda(V))
\]
is an isomorphism. Hence, to prove Lemma \ref{lem:excision} it is enough to show that the natural homomorphism
\[
H_*(\Lambda^{<b}(\bar{V}), \Lambda^{<b}(\bar{V}) \setminus \Lambda(V)) \to
H_*(\Lambda^{<b}(\R^n), \Lambda^{<b}(\R^n) \setminus \Lambda(V))
\]
is an isomorphism. To show this, we need the following trick: 
take a sequence $(g^l)_l$ of Riemannian metrics on $\R^n$, with the following properties: 

\begin{enumerate}
\item[(g-1):] For any tangent vector $\xi$ on $\R^n$, $|\xi|_{g^l}$ is decreasing in $l$: $|\xi|_{g^1} > |\xi|_{g^2} > \cdots$. 
\item[(g-2):] For any tangent vector $\xi$ on $\R^n$, $\lim_{l \to \infty} |\xi|_{g^l} = |\xi|$, where $|\, \cdot \,|$ is the standard metric. 
\item[(g-3):] For any $l \ge 1$, there exists an embedding $\tau_l: \partial V \times (-\varepsilon_l, \varepsilon_l) \to \R^n$ with the following properties: 
\begin{itemize}
\item $\tau_l(x,0)=x$ for any $x \in \partial V$. 
\item $\tau_l^{-1}(V) = \partial V \times (-\varepsilon_l, 0)$. 
\item $\tau_l^*g^l$ is a product metric of $g^l|_{\partial V}$ and the standard metric on $(-\varepsilon_l, \varepsilon_l)$. 
\end{itemize}
We set $W_l:= V \cup \Im \tau_l$. 
\end{enumerate}
For each $l$ we define 
\[
\Lambda_l^{<b}(\R^n):= \biggl\{  \gamma \in \Lambda(\R^n) \biggm{|} \int_{S^1} |\dot{\gamma}(t)|_{g^l} \, dt < b  \biggr\}, \quad
\Lambda_l^{<b}(\bar{V}):=\Lambda_l^{<b}(\R^n) \cap \Lambda(\bar{V}).
\] 
By (g-1), $(\Lambda^{<b}_l(\R^n))_l$, $(\Lambda^{<b}_l(\bar{V}))_l$ are increasing sequences of open sets in 
$\Lambda^{<b}(\R^n)$, $\Lambda^{<b}(\bar{V})$. 
By (g-2), $\bigcup_l \Lambda^{<b}_l(\R^n) = \Lambda^{<b}(\R^n)$, 
$\bigcup_l \Lambda^{<b}_l(\bar{V}) = \Lambda^{<b}(\bar{V})$. Thus there holds 
\begin{align*}
H_*(\Lambda^{<b}(\bar{V}), \Lambda^{<b}(\bar{V}) \setminus \Lambda(V)) 
&=\varinjlim_{l \to \infty} H_*(\Lambda_l^{<b}(\bar{V}), \Lambda_l^{<b}(\bar{V}) \setminus \Lambda(V)), \\
H_*(\Lambda^{<b}(\R^n), \Lambda^{<b}(\R^n) \setminus \Lambda(V)) 
&=\varinjlim_{l \to \infty} H_*(\Lambda_l^{<b}(\R^n), \Lambda_l^{<b}(\R^n) \setminus \Lambda(V)).
\end{align*}

Therefore Lemma \ref{lem:excision} is reduced to the following lemma: 

\begin{lem}\label{lem:excision-2}
For any $l \ge 1$, the natural homomorphism 
\[
H_*(\Lambda_l^{<b}(\bar{V}), \Lambda_l^{<b}(\bar{V}) \setminus \Lambda(V)) \to
H_*(\Lambda_l^{<b}(\R^n), \Lambda_l^{<b}(\R^n) \setminus \Lambda(V))
\]
is an isomorphism. 
\end{lem}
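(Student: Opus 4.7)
The plan is to factor the inclusion of pairs through a neighborhood of $\bar V$,
\[
(\Lambda_l^{<b}(\bar V), \Lambda_l^{<b}(\bar V) \setminus \Lambda(V)) \to (\Lambda_l^{<b}(W_l), \Lambda_l^{<b}(W_l) \setminus \Lambda(V)) \to (\Lambda_l^{<b}(\R^n), \Lambda_l^{<b}(\R^n) \setminus \Lambda(V)),
\]
and to show that each of the two maps induces an isomorphism on homology. The product-metric clause of (g-3) is what drives the first step, while the second is a transcription of the proof of Corollary \ref{cor:excision-1} with $g^l$-length replacing the Euclidean length.

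For the first inclusion I would construct an explicit strong deformation retract of pairs built from the collar $\tau_l$. Define $h: W_l \times [0,1] \to W_l$ by $h(y, s) := y$ for $y \in V$ and $h(\tau_l(x, t), s) := \tau_l(x, (1-s)t)$ for $(x, t) \in \partial V \times [0, \varepsilon_l)$; the two formulas agree continuously on $\partial V$. Then $h_0 = \id_{W_l}$, $h_1(W_l) \subset \bar V$, each $h_s$ fixes $\bar V$ pointwise, carries $V$ into $V$, and carries $W_l \setminus V$ into $W_l \setminus V$. The decisive estimate is $|dh_s(v)|_{g^l} \le |v|_{g^l}$: in the product chart $\tau_l^* g^l = g^l|_{\partial V} \oplus dt^2$, the differential $dh_s$ acts as the identity on the $\partial V$-factor and as multiplication by $1-s \in [0,1]$ on the $dt$-factor. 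Post-composition $\gamma \mapsto h_s \circ \gamma$ therefore preserves $\Lambda_l^{<b}$ (by the $1$-Lipschitz property) and simultaneously preserves $\Lambda(V)$ together with its complement (by the pointset behaviour of $h_s$), yielding the desired deformation retract of pairs.

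For the second inclusion I would replay the arguments of Lemma \ref{lem:excision-1} and Corollary \ref{cor:excision-1}, substituting $g^l$-length for the Euclidean length throughout. The outward-pointing compactly supported vector field produces an isotopy that pushes any given chain off $\bar V$; since $g^l$-length is continuous on $W^{1,2}$ loops, a sufficiently small flow time keeps the moved chain inside $\Lambda_l^{<b}$. This gives $H_*(\Lambda_l^{<b}(U) \setminus \Lambda(V), \Lambda_l^{<b}(U) \setminus \Lambda(\bar V)) = 0$ for both $U = W_l$ and $U = \R^n$. Standard topological excision applied to the open subset $\Lambda_l^{<b}(\R^n) \setminus \Lambda(W_l)$ (whose closure is disjoint from $\Lambda(\bar V)$ because $\bar V \subset W_l$) then gives an isomorphism on the pairs with denominator $\cdot \setminus \Lambda(\bar V)$; combining with the previous vanishings finishes the step.

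The only genuine point requiring care is the $1$-Lipschitz property of $h_s$ in the first step, and this is precisely what the product-metric clause of (g-3) was engineered to supply; apart from this the entire argument is either a direct homotopy construction or a line-by-line adaptation of arguments already given in Section 4.4.
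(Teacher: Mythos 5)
Your overall architecture matches the paper: you factor the inclusion through the intermediate neighborhood $W_l$ and treat the two inclusions separately, and your handling of the second inclusion (re-running Lemma \ref{lem:excision-1} and Corollary \ref{cor:excision-1} with $g^l$-length) is precisely what the paper does. The divergence, and the gap, is in the first step.

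For the first inclusion you build a strong deformation retraction of pairs from the $1$-Lipschitz collar collapse $h_s(\tau_l(x,t)) := \tau_l(x,(1-s)t)$ (for $t \ge 0$), fixing $\bar V$ pointwise. The map $h_s$ is indeed $1$-Lipschitz for $g^l$, and its pointset behaviour is exactly as you describe. However, $h_s$ is \emph{not} $C^1$: across $\partial V$ the derivative in the normal direction jumps from $1$ (on the $V$-side, where $h_s = \id$) to $1-s$ (on the outer collar). A $1$-Lipschitz retraction of $W_l$ onto $\bar V$ that fixes $\bar V$ pointwise necessarily has such a corner. The continuity of the post-composition operator $\gamma \mapsto h_s \circ \gamma$ on $W^{1,2}(S^1,\cdot)$ is \emph{not} automatic for Lipschitz-but-not-$C^1$ maps; it is in fact a somewhat delicate Nemytskii-type claim, requiring something like the observation that at a.e.\ $t$ in $\gamma^{-1}(\partial V)$ the velocity $\dot\gamma(t)$ is tangent to $\partial V$ (where $dh_s$ is the identity), followed by dominated convergence. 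You never address this, and without it your deformation retraction is not established as a continuous map $\Lambda^{<b}_l(W_l) \times [0,1] \to \Lambda^{<b}_l(W_l)$.

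This is precisely the issue the paper's proof was engineered to avoid. The paper replaces your non-smooth $h_s$ by a genuinely $C^\infty$ map $\varphi_t$ built from the cut-off $\rho$ with $\rho \equiv 0$ on $[0,\varepsilon_l)$; for such a map, post-composition on $W^{1,2}$ loops is automatically continuous. The cost is that a smooth retraction fixing $\bar V$ \emph{cannot} be $1$-Lipschitz at $\partial V$, so the paper only gets $|d\varphi_t(\xi)|_{g^l} \le (b/b')|\xi|_{g^l}$. That in turn forces the paper to abandon the global deformation retraction of pairs and argue chain-by-chain: for a given singular chain, compactness of $\Delta^k$ yields a $b' < b$ bounding all simplex lengths, and only then is $\rho$ (hence $\varphi_t$) chosen, with $\rho' \le b/b'$, so that the pushed-forward chain still lies in $\Lambda^{<b}_l$. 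So the two approaches trade off the same difficulty in dual ways. Your route is conceptually cleaner (one homotopy equivalence of pairs instead of two chain-level computations for surjectivity and injectivity), but as written it has a real hole: either supply and prove the continuity lemma for composition with your Lipschitz $h_s$, or fall back to the paper's smooth-but-$(b/b')$-Lipschitz map and the attendant chain-by-chain argument.

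Two minor points: (i) you write that ``the two formulas agree continuously on $\partial V$'' -- in fact the first formula is not invoked on $\partial V$ at all (since $\partial V \not\subset V$); what needs checking is continuity as one approaches $\partial V$ from the interior, which does hold. (ii) You never verify that $h_s \circ \gamma$ is still of class $W^{1,2}$; this does follow from Lipschitz-ness of $h_s$, but it is worth a word.
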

\begin{proof}
Let us take $W_l \supset \bar{V}$ as in (g-3). 
Since Corollary \ref{cor:excision-1} is valid also for $g^l$, 
\[
H_*(\Lambda^{<b}_l(W_l), \Lambda^{<b}_l(W_l) \setminus \Lambda(V)) \to
H_*(\Lambda^{<b}_l(\R^n), \Lambda^{<b}_l(\R^n) \setminus \Lambda(V))
\]
is an isomorphism. Hence it is enough to show that 
\[
I: H_*(\Lambda_l^{<b}(\bar{V}), \Lambda_l^{<b}(\bar{V}) \setminus \Lambda(V)) \to
H_*(\Lambda_l^{<b}(W_l), \Lambda_l^{<b}(W_l) \setminus \Lambda(V))
\]
is an isomorphism. We check surjectivity and injectivity. 

We prove surjectivity of $I$. 
Take $\alpha = \sum_i c_i \alpha_i \in C_k(\Lambda_l^{<b}(W_l))$ such that $\partial \alpha \in C_{k-1}(\Lambda_l^{<b}(W_l) \setminus \Lambda(V))$.  
Since $\Delta^k$ is compact, there exists $b'<b$ such that
\[ 
\text{length of $\alpha_i(p)$ with respect to $g^l$} < b' \quad ( \forall i, \, \forall p \in \Delta^k).
\]
Let us take $\rho \in C^\infty((-\varepsilon_l,\varepsilon_l))$ with the following properties: 
\begin{itemize}
\item $\rho(s) \equiv 0$ on $[0, \varepsilon_l)$.
\item $0 \le \rho'(s) \le b/b'$, $-\varepsilon_l < \rho(s) \le 0$ on $(-\varepsilon_l, 0)$.
\item $\rho(s) \equiv s$ near $-\varepsilon_l$. 
\end{itemize}
Then we define a smooth map $\varphi: W_l \times [0,1] \to W_l; (x,t) \mapsto \varphi_t(x)$ such that: 
\begin{itemize}
\item If $x \notin \Im \tau_l$, $\varphi_t(x) = x$.
\item If $x = \tau_l(y,s)$, $\varphi_t(x) = \tau_l(y, (1-t)s + t\rho(s))$. 
\end{itemize}
It is easy to check the following properties of $\varphi$: 
\begin{itemize}
\item $\varphi_0 = \id_{W_l}$, $\varphi_1(W_l)=\bar{V}$. 
\item For any $0 \le t \le 1$, $\varphi_t(W_l \setminus V) \subset W_l \setminus V$, $\varphi_t(\bar{V}) = \bar{V}$. 
\item For any tangent vector $\xi$ on $W_l$ and $0 \le t \le 1$, $|d\varphi_t(\xi)|_{g^l} \le (b/b') |\xi|_{g^l}$.
\end{itemize}
We define $\alpha^t_i: \Delta^k \to \Lambda(W_l)$ by 
$\alpha^t_i(p):= \varphi_t \circ \alpha_i(p) \quad ( \forall p \in \Delta^k)$. 
By the last property of $\varphi$, $\alpha_i^t(\Delta^k) \subset \Lambda^{<b}_l(W_l)$. 
Moreover, $\alpha^t:= \sum_i c_i \alpha^t_i$ satisfies the following properties: 
\begin{itemize}
\item $\alpha^0=\alpha$. 
\item $\alpha^t \in C_k(\Lambda_l^{<b}(W_l))$, $\partial \alpha^t \in C_{k-1}(\Lambda_l^{<b}(W_l) \setminus \Lambda(V))$ for any $t \in [0,1]$. 
\item $\alpha^1 \in C_k(\Lambda_l^{<b}(\bar{V}))$, $\partial \alpha^1 \in C_{k-1}(\Lambda_l^{<b}(\bar{V}) \setminus \Lambda(V))$. 
\end{itemize}
Thus we obtain $[\alpha]=[\alpha^0]=[\alpha^1] \in \Im I$.
Hence we have proved surjectivity of $I$. 

We prove injectivity of $I$. 
Let $\alpha = \sum_i c_i \alpha_i \in C_k(\Lambda_l^{<b}(\bar{V}))$ such that 
$\partial \alpha \in C_{k-1}(\Lambda_l^{<b}(\bar{V}) \setminus \Lambda(V))$. 
We show that if $I([\alpha])=0$ then $[\alpha]=0$. 
By $I([\alpha])=0$, there exists 
$\beta = \sum_j d_j \beta_j \in C_{k+1}(\Lambda_l^{<b}(W_l))$ such that 
$\partial \beta - \alpha \in C_k( \Lambda_l^{<b}(W_l) \setminus \Lambda(V))$. 
Since $\Delta^k$, $\Delta^{k+1}$ are compact, there exists $b'<b$ such that 
\[
\text{length of $\alpha_i(p)$, $\beta_j(q)$ with respect to $g^l$} < b' \,
(\forall i, \, \forall j, \, \forall p \in \Delta^k, \, \forall q \in \Delta^{k+1}).
\]
Taking $\varphi: W_l \times [0,1] \to W_l$ as before, we set 
\[
\alpha^t_i:= \varphi_t \circ \alpha_i, \quad  \alpha^t:= \sum_i c_i \alpha^t_i, \quad
\beta^t_j:= \varphi_t \circ \beta_j, \quad \beta^t:= \sum_j d_j \beta^t_j.
\] 
Then, it is easy to confirm the following claims: 
\begin{itemize}
\item For any $0 \le t \le 1$, $\alpha^t \in C_k(\Lambda^{<b}_l(\bar{V}))$, $\partial \alpha^t \in C_{k-1}(\Lambda^{<b}_l(\bar{V}) \setminus \Lambda(V))$. 
\item $\beta^1 \in C_{k+1}(\Lambda^{<b}_l(\bar{V}))$.
\item $\partial \beta^1 - \alpha^1 \in C_k(\Lambda^{<b}_l(\bar{V}) \setminus \Lambda(V))$. 
\end{itemize}
Thus we obtain 
$[\alpha] = [\alpha^1] = [\partial \beta^1]=0$ in $H_k(\Lambda^{<b}_l(\bar{V}), \Lambda^{<b}_l(\bar{V}) \setminus \Lambda(V))$. 
Hence we have proved injectivity of $I$. 
\end{proof}

\section{$C^0$-estimates}

The goal of this section is to prove Lemmas on $C^0$-estimates for Floer trajectories: 
Lemma \ref{lem:C0_1}, \ref{lem:C0_2}, \ref{lem:C0_3}, \ref{lem:C0_4}, \ref{lem:C0_5}.

\subsection{$W^{1,2}$-estimate}

The goal of this subsection is to prove the following $W^{1,2}$- estimate. 
In the following statement, an expression "$c_0(H,M)$" means that $c_0$ is a constant which depends on $H$ and $M$. 

\begin{prop}\label{prop:L2}
For any $H \in C^\infty(\R \times S^1 \times T^*\R^n)$ satisfying (HH2), (HH3)
and $M>0$, there exists a constant $c_0(H,M)>0$ which satisfies the following property:
\begin{quote}
Let $I \subset \R$ be a closed interval of length $\le 3$, 
and $(J_{s,t})_{(s,t) \in I \times S^1}$ be a $I \times S^1$ -family of almost complex structures on $T^*\R^n$, such that 
every $J_{s,t}$ is compatible with $\omega_n$. 
Suppose that there holds 
\[
\frac{|\xi|^2}{2} \le \omega_n(\xi, J_{s,t}(\xi)) \le 2 |\xi|^2 
\]
for any $s \in I$, $t \in S^1$ and tangent vector $\xi$ on $T^*\R^n$. 
Then, for any $W^{1,3}$-map $u: I \times S^1 \to T^*\R^n$ which satisfies 
\[
\partial_s u - J_{s,t}(\partial_t u - X_{H_{s,t}}(u))=0, \qquad 
\sup_{s \in I} \big\lvert \mca{A}_{H_s}(u(s)) \big\rvert \le M, 
\]
there holds $\| u \|_{W^{1,2}(I \times S^1)} \le c_0$. 
\end{quote}
\end{prop}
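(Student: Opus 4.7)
The plan is to reduce the $W^{1,2}$-estimate to a bound on the $L^2$-profile $N(s):=\int_{S^1}|u(s,t)|^2\,dt$ via two standard identities, and then close the loop on $N$ using a differential inequality derived by testing Floer's equation against $u$ itself.

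The first identity is the energy identity: differentiating $s\mapsto \mca{A}_{H_s}(u(s))$ along Floer's equation gives
\begin{equation*}
\frac{d}{ds}\mca{A}_{H_s}(u(s)) = \int_{S^1}|\partial_s u|^2_{g_{J_{s,t}}}\,dt - \int_{S^1}(\partial_s H)(s,t,u(s,t))\,dt.
\end{equation*}
By (HH3), $\partial_s H = a'(s)(|q|^2+|p|^2) + \partial_s\Delta$, where $\int_\R a'(s)\,ds$ and $\|\partial_s\Delta\|_{C^0}$ are both bounded in terms of $H$ (using (HH1)). Integrating over $I$, using the pinching of $J$ and $|\mca{A}_{H_s}(u(s))|\le M$, I obtain $\|\partial_s u\|^2_{L^2(I\times S^1)}\le C_1(H,M)+C_2(H)\sup_{s\in I}N(s)$. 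The second identity is the Bochner-type formula obtained by squaring $\partial_s u - J\partial_t u = -JX_H(u)$, namely $|du|^2_{g_J}+2\,u^*\omega_n = |X_H|^2_{g_J}$. Integrating on $I\times S^1$ and applying Stokes to $u^*\omega_n = u^*d\lambda$ (with $\lambda=\sum_i p_i\,dq_i$) expresses $\int u^*\omega_n$ as a difference of two action values plus $\int_{S^1}(H_{s_+}-H_{s_-})(t,u)\,dt$, which by (HH3) is $O(1+\sup_{s\in I}N(s))$; combined with $|X_H(u)|\le C(H)(1+|u|)$, this gives $\|du\|^2_{L^2(I\times S^1)}\le C_3(H,M)(1+\sup_{s\in I}N(s))$. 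With $\|u\|^2_{L^2(I\times S^1)}=\int_I N\,ds\le 3\sup_{s\in I} N(s)$, everything is reduced to an a-priori bound on $\sup_{s\in I}N(s)$.

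To bound $\sup N$, I pair Floer's equation with $u$: the $\omega_n$-compatibility of $J$ gives $g_J(\partial_s u,u)=\omega_n(\partial_t u,u)+dH(u)\cdot u$. Integrating over $S^1$, the identity $\int\omega_n(\partial_t u,u)\,dt = -2\int p\cdot\dot q\,dt$ (integration by parts), the expansion $H=Q^{a(s)}+\Delta$ giving $\int dH(u)\cdot u\,dt = 2a(s)N(s)+O(\sqrt{N(s)})$, and the action identity $\int p\cdot\dot q\,dt = \mca{A}_{H_s}(u(s))+a(s)N(s)+O(1)$ together produce a crucial cancellation of the two $2a(s)N(s)$ terms, leaving
\begin{equation*}
\frac{1}{2}N'(s) = -2\mca{A}_{H_s}(u(s)) + R(s), \quad |R(s)|\le C_4(H)\bigl(1+\sqrt{N(s)}\bigl(1+\|\partial_s u(s,\cdot)\|_{L^2(S^1)}\bigr)\bigr),
\end{equation*}
where the $\|\partial_s u(s,\cdot)\|_{L^2(S^1)}\sqrt{N(s)}$ term arises from the discrepancy $g_J - g_{\std}$ when comparing $\frac{1}{2}N'(s)$ with $\int g_J(\partial_s u,u)\,dt$. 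Using $|\mca{A}_{H_s}(u(s))|\le M$, Cauchy--Schwarz on $I$, the bound on $\|\partial_s u\|_{L^2(I\times S^1)}$ from the previous step, and a Gr\"onwall argument applied to $s\mapsto\sqrt{N(s)+1}$ on $I$, together with the hypothesis $|I|\le 3$, I would arrive at a closed algebraic inequality forcing $\sup_{s\in I}N(s)\le C_5(H,M)$. Substituting back completes the proof.

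The main obstacle is this last step. The functional $\mca{A}_{H_s}$ is an indefinite quadratic form with infinite-dimensional positive and negative eigenspaces (because $H$ is of quadratic type at infinity), so the pointwise action bound alone yields no direct $L^2$-control on $u(s,\cdot)$; the differential equation structure across $I$ must be used. The exact cancellation of the $2a(s)N(s)$ contributions is the essential technical point, since without it the coefficient of $N$ on the right-hand side of the $N'$-inequality would dominate and Gr\"onwall would be useless. The length restriction $|I|\le 3$ is what keeps both the Gr\"onwall factor and the Cauchy--Schwarz constants under control.
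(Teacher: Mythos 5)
Your algebraic reductions are largely sound and run parallel to the first half of the paper's strategy: the energy identity controls $\|\partial_s u\|_{L^2(I\times S^1)}^2$ purely in terms of $H$ and $M$ (in fact with no $\sup_s N(s)$ dependence at all, since $\partial_s H\ge 0$ by (HH2) makes the $\int\partial_s H$ contribution have a favorable sign and drop out; also the sign of the $|\partial_s u|^2$ term in your stated energy identity is reversed — the action is \emph{decreasing} — but this is recoverable); and your computation showing that $\tfrac12 N'(s)+2\mca{A}_{H_s}(u(s))$ is controlled by $1+\sqrt{N(s)}\bigl(1+\|\partial_s u(s,\cdot)\|_{L^2(S^1)}\bigr)$, with the exact cancellation of the $2a(s)N(s)$ terms, is correct. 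The fatal gap is the final step, "a Gr\"onwall argument applied to $s\mapsto\sqrt{N(s)+1}$ on $I$ \ldots forcing $\sup_{s\in I}N(s)\le C_5$." Gr\"onwall propagates a bound from some slice $s_0$ across $I$, but the hypotheses furnish no $s_0\in I$ at which $N(s_0)$ is a priori controlled: as you observe yourself, $\mca{A}_{H_s}$ is an indefinite quadratic-type functional, so $|\mca{A}_{H_{s_0}}(u(s_0))|\le M$ gives no $L^2$ bound on $u(s_0)$, and there are no boundary conditions at $\partial I$. Taking $s_0$ at the minimum of $N$ (so $N'(s_0)=0$ when $s_0\in\interior I$) only yields $|\mca{A}_{H_{s_0}}(u(s_0))|\lesssim 1+\sqrt{N(s_0)}\bigl(1+\|\partial_s u(s_0)\|_{L^2}\bigr)$, which remains consistent with $N(s_0)$ arbitrarily large. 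Your differential inequality bounds the \emph{oscillation} $\sup N-\inf N$, not $\inf N$. The obstruction is real: a constant-in-$s$ cylinder $u(s,t)=x(t)$ with $x\in\mca{P}(H_{s})$ (for $H$ $s$-independent on $I$) has $N'\equiv 0$ and bounded action, yet bounding $\|x\|_{L^2}$ is precisely the content of Lemma~\ref{lem:orbit}, which itself requires the compactness argument you are trying to avoid.

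The paper closes this gap with Lemma~\ref{lem:L2}, a \emph{static} estimate valid for an arbitrary loop $x$, not only for Floer slices:
\[
\|x\|_{L^2}^2+\|\partial_t x\|_{L^2}^2\le c_1\Bigl(1+\int_{S^1}|\partial_t x-X_{H_{s,t}}(x)|^2+\partial_s H_{s,t}(x)\,dt\Bigr),
\]
proved by a rescaling/compactness contradiction: if the estimate failed along $(x_k,s_k)$, the normalized $v_k=x_k/\|x_k\|_{L^2}$ have a weak $W^{1,2}$-limit $v$ solving $\partial_t v=X_{Q^{a(s)}}(v)$ with $\|v\|_{L^2}=1$, forcing $a(s)\in\pi\Z$, hence $a'(s)>0$ by (HH3); but then $\int\partial_s H_{s_k}(x_k)\,dt\sim a'(s)\|x_k\|_{L^2}^2$, contradicting the assumed blow-up. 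Applied on each slice $s\in I$ of the Floer cylinder, the right-hand side is (up to constants) $\|\partial_s u(s)\|_{L^2}^2+\int\partial_s H$, both controlled in $L^1(I)$ by the energy identity, and integrating over $I$ gives the $W^{1,2}$ bound with no initial-value input whatsoever. In effect, the nondegeneracy clause "$a(s)\in\pi\Z\Rightarrow a'(s)>0$" in (HH3) is what replaces the missing Gr\"onwall seed, and a compactness argument is unavoidable to exploit it; that is the idea your proposal lacks. (A minor point: (HH1) is not among the hypotheses of this proposition, so you should not invoke it.)
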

\begin{rem}
$H_s \in C^\infty(S^1 \times T^*\R^n)$ is defined as $H_s(t,q,p):=H(s,t,q,p)$. 
\end{rem}

A crucial step is the following lemma. 

\begin{lem}\label{lem:L2}
Let $H$ and $I$ be as in Proposition \ref{prop:L2}. 
Then, there exists a constant $c_1(H)>0$ such that: 
for any $x \in C^\infty(S^1 , T^*\R^n)$ and $s \in I$, there holds 
\[
\| x \|_{L^2} ^2 + \| \partial_t x \|_{L^2}^2 
\le c_1 \biggl(1 + \int_{S^1} |\partial_t x - X_{H_{s,t}}(x(t)) |^2 + \partial_s H_{s,t}(x(t)) \, dt\biggr). 
\]
\end{lem}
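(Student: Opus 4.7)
The plan is to argue by contradiction and rescaling. Suppose the estimate fails; then there exist sequences $s_k \in \R$ and smooth loops $x_k \colon S^1 \to T^*\R^n$ with
\[
R_k^2 := \|x_k\|_{L^2}^2 + \|\partial_t x_k\|_{L^2}^2 > k\biggl(1 + \|\partial_t x_k - X_{H_{s_k,t}}(x_k)\|_{L^2}^2 + \int_{S^1} \partial_s H(s_k,t,x_k(t))\,dt\biggr).
\]
The ``$+1$'' on the right forces $R_k^2 > k$, so $R_k \to \infty$. Conditions (HH1) and (HH3) together imply that $a$, $a'$, $\sup_t\|\Delta_{s,t}\|_{C^1}$ and $\sup_t\|\partial_s\Delta_{s,t}\|_{C^0}$ are all uniformly bounded in $s \in \R$ (indeed $a$ is eventually constant in $|s|$), so after a subsequence we may assume $a(s_k) \to a^*$ and $a'(s_k) \to (a')^*$; the key dichotomy from (HH3), namely ``$(a')^* > 0$ or $a^* \notin \pi\Z$'', survives in the limit.

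Normalize by setting $z_k := x_k/R_k$, so $\|z_k\|_{W^{1,2}(S^1)} = 1$, and blow up the Hamiltonian by
\[
h_k(t,q,p) := R_k^{-2}\,H(s_k,t,R_k q,R_k p) = a(s_k)(|q|^2 + |p|^2) + R_k^{-2}\,\Delta(s_k,t,R_k q,R_k p).
\]
The scaling identity $X_{h_k}(z_k) = R_k^{-1}X_{H_{s_k,t}}(x_k)$ gives $X_{h_k}(z_k) = 2 a(s_k) J_\std z_k + Y_k$ with $\|Y_k\|_{C^0} \le R_k^{-1}\sup_{s,t}\|d\Delta_{s,t}\|_{C^0} \to 0$, and similarly
$R_k^{-2}\int_{S^1}\partial_s H(s_k,t,x_k(t))\,dt = a'(s_k)\|z_k\|_{L^2}^2 + O(R_k^{-2})$. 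Dividing the failing inequality by $R_k^2$ and substituting these expansions yields
\[
\|\partial_t z_k - 2 a(s_k) J_\std z_k\|_{L^2} \to 0 \qquad \text{and} \qquad a'(s_k)\,\|z_k\|_{L^2}^2 \to 0,
\]
the second using $a'(s_k) \ge 0$ from (HH3).

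By Rellich compactness, after a further subsequence $z_k \to z^*$ strongly in $L^2(S^1)$ and $\partial_t z_k \rightharpoonup \partial_t z^*$ weakly in $L^2(S^1)$. Passing to the limit in the first relation (the term $2a(s_k) J_\std z_k$ converges strongly in $L^2$) gives $\partial_t z^* = 2 a^* J_\std z^*$, whence $z^*(t) = \exp(2a^* J_\std t)\,z^*(0)$ is a smooth $1$-periodic solution of a linear ODE. The second relation yields $(a')^*\,\|z^*\|_{L^2}^2 = 0$: if $(a')^* > 0$ then $z^* \equiv 0$ directly; if $(a')^* = 0$ then the dichotomy forces $a^* \notin \pi\Z$, and the ODE admits only the trivial $1$-periodic solution. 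Either way $z_k \to 0$ in $L^2$, so
\[
\|\partial_t z_k\|_{L^2} \le \|\partial_t z_k - 2 a(s_k) J_\std z_k\|_{L^2} + 2|a(s_k)|\,\|z_k\|_{L^2} \to 0,
\]
contradicting $\|z_k\|_{W^{1,2}} = 1$.

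The main obstacle is the degenerate case $(a')^* = 0$, where the nonnegative integral $\int_{S^1}\partial_s H\,dt$ alone gives no $L^2$ control on $z^*$. This is exactly where the precise form of (HH3) enters: its dichotomy forces $a^* \notin \pi\Z$ and thereby rules out nontrivial $1$-periodic solutions of the limiting constant-coefficient equation $\partial_t z = 2a^*J_\std z$. The remaining ingredients are the natural blow-up at scale $R_k$ and Rellich's theorem.
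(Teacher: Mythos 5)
Your argument is correct and uses essentially the same blow-up-by-contradiction strategy as the paper, with a small streamlining: you rescale by the full $W^{1,2}$-norm $R_k$, which controls $\|x\|_{L^2}^2$ and $\|\partial_t x\|_{L^2}^2$ simultaneously, whereas the paper rescales only by $\|x\|_{L^2}$, proves the $L^2$-estimate first (so the limit loop $v$ has $\|v\|_{L^2}=1$, forcing $a(s)\in\pi\Z$, hence $a'(s)>0$, and then the contradiction is read off from the size of $\int\partial_sH$), and finally derives the $\partial_t x$-bound as a short corollary; in both versions the decisive input is the dichotomy in (HH3) ruling out a nontrivial $1$-periodic solution of the limiting equation $\partial_t z = 2a^*J_{\std}z$. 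One small slip: you invoke (HH1), which is not among the hypotheses of Proposition \ref{prop:L2} and is not needed — the failing sequence satisfies $s_k\in I$ (not $s_k\in\R$ as written), so compactness of $I$ and continuity of $a,a'$ already give the required convergence and the validity of the dichotomy at the limit point $s^*\in I$.
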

\begin{proof}
Let us take $c_2(H)$ so that $c_2 > \sup_{s,t} \| \partial_s \Delta_{s,t} \|_{C^0}$
(recall that $\Delta_{s,t}$ was defined in (HH3)). 
Then we show that there exists a constant $c_3(H)>0$ such that there holds 
\begin{equation}\label{eq:xl2}
\| x \|_{L^2}^2 
\le c_3 \biggl(c_2 + \int_{S^1} |\partial_t x - X_{H_{s,t}}(x(t)) |^2 + \partial_s H_{s,t}(x(t)) \, dt\biggr)
\end{equation}
for any $x \in C^\infty(S^1, T^*\R^n)$ and $s \in I$. 
Suppose that this does not hold. 
Then, there exists a sequence $(x_k)_k$ and $(s_k)_k$ such that 
\begin{equation}\label{eq:x_k}
\frac{\| x_k \|_{L^2}^2}{c_2 + \int_{S^1} |\partial_t x_k - X_{H_{s_k,t}}(x_k(t))|^2 + \partial_s H_{s_k,t}(x_k(t))\, dt } \to \infty 
\qquad (k \to \infty).  
\end{equation}
Since $c_2 + \partial_s H_{s,t}(q,p)>0$ for any $(s,t,q,p)$, there also holds 
\[
\frac{ \| \partial_t x_k - X_{H_{s_k}}(x_k) \|_{L^2} }{ \|x_k\|_{L^2}}  \to 0
\qquad (k \to \infty).
\]

Let us set $m_k:= \|x_k\|_{L^2}$, and $v_k:=x_k/m_k$. 
Then, obviously $\| v_k \|_{L^2}=1$. 
We show that $(v_k)_k$ is $W^{1,2}$-bounded, i.e. $(\partial_t v_k)_k$ is $L^2$-bounded.
To show this, we set  $h^k(t,q,p):= H_{s_k,t}(m_k q, m_k p)/{m_k}^2$, and consider the inequality 
\[
\| \partial_t v_k \|_{L^2} \le \| \partial_t v_k - X_{h^k}(v_k) \|_{L^2} + \| X_{h^k} (v_k) \|_{L^2}. 
\]
$\| \partial_t v_k - X_{h^k}(v_k) \|_{L^2}$ is bounded in $k$, since
\begin{equation}\label{eq:dtvkxhkt}
\| \partial_t v_k - X_{h^k}(v_k) \|_{L^2} = 
\frac{ \| \partial_t x_k - X_{H_{s_k}}(x_k) \|_{L^2}}{m_k}  \to 0  \qquad (k \to \infty). 
\end{equation}
To bound $\| X_{h^k} (v_k) \|_{L^2}$, we use the inequality 
\begin{equation}\label{eq:xqas}
\| X_{Q^{a(s_k)}}(v_k) - X_{h^k}(v_k) \|_{L^2} \le \| X_{Q^{a(s_k)}}(v_k) - X_{h^k}(v_k) \|_{C^0} \le 
\frac{\sup_{t \in S^1} \| \Delta_{s_k,t}\|_{C^1}}{m_k}.
\end{equation}
Then, it is easy to see that there exists $c_4(H)>0$ such that 
$\| X_{h^k}(v_k) \|_{L^2} \le c_4 ( 1+ \|v_k \|_{L^2})$.  
Thus we have proved that $(v_k)_k$ is $W^{1,2}$-bounded. 

By taking a subsequence of $(v_k)_k$, we may assume that 
there exists $v \in W^{1,2}(S^1, T^*\R^n)$ such that $\lim_{k \to \infty} \|v - v_k \|_{C^0}=0$, 
and $\partial_t v_k$ converges to $\partial_t v$ weakly in $L^2$. 
Moreover, we may assume that $(s_k)_k$ converges to $s \in I$. 

We show that $\lim_{k \to \infty} \| X_{Q^{a(s)}}(v) - X_{h^k}(v_k) \|_{L^2} =0$. 
By the triangle inequality, 
\[
\| X_{Q^{a(s)}}(v) - X_{h^k}(v_k) \|_{L^2} \le 
\| X_{Q^{a(s)}}(v) - X_{Q^{a(s)}}(v_k) \|_{L^2} + 
\| X_{Q^{a(s)}}(v_k) - X_{h^k}(v_k) \|_{L^2}. 
\]
Then, $\lim_{k \to \infty} \| X_{Q^{a(s)}}(v) - X_{Q^{a(s)}}(v_k) \|_{L^2}=0$ since $\lim_{k \to \infty} \| v-v_k \|_{L^2} =0$. 
On the other hand, (\ref{eq:xqas}) shows that 
$\lim_{k \to \infty} \| X_{Q^{a(s)}}(v_k) - X_{h^k}(v_k) \|_{L^2}=0$. 

Now we  show that $\partial_t v - X_{Q^{a(s)}}(v) =0$ in $L^2(S^1, T^*\R^n)$, i.e.
\[
\langle \partial_t v - X_{Q^{a(s)}}(v), \xi \rangle_{L^2} =0
\]
for any $\xi \in C^\infty(S^1, T^*\R^n)$. This follows from 
\[
\langle \partial_t v - X_{Q^{a(s)}}(v), \xi \rangle_{L^2}
= \lim_{k \to \infty} \langle \partial_t v_k -  X_{h^k}(v_k), \xi \rangle_{L^2} = 0.  
\]
The first equality holds since in $L^2(S^1, T^*\R^n)$
\[
\text{$\partial_t v_k$ converges to $\partial_t v$ (weakly)}, \quad 
\text{$X_{h^k}(v_k)$ converges to $X_{Q^{a(s)}}(v)$ (in norm)}.
\]
The second equality follows from (\ref{eq:dtvkxhkt}). 

Now we have shown that $\partial_t v - X_{Q^{a(s)}}(v) =0$ in $L^2(S^1, T^*\R^n)$. 
Therefore, by a boot strapping argument, we conclude that $v \in C^\infty(S^1, T^*\R^n)$. 
This implies that $a(s) \in \pi\Z$, hence $a'(s)>0$ by (HH3). 
Hence we obtain 
\begin{align*}
\frac{m_k^2}{c_2 + \int_{S^1} \partial_s H_{s_k,t}(x_k(t)) \, dt}
&\le \frac{m_k^2}{\int_{S^1} Q^{a'(s_k)}(x_k(t))\, dt} \\
&= \frac{1}{\int_{S^1} Q^{a'(s_k)}(v_k(t))\, dt}
\to \frac{1}{a'(s) \|v \|_{L^2}^2}\quad(k \to \infty).
\end{align*}
However, this contradicts the assumption that $(x_k)_k$ satisfies 
(\ref{eq:x_k}). 
Hence we have proved (\ref{eq:xl2}). 
Setting $c_5:= \max\{c_2c_3, c_3\}$, there holds 
\begin{equation}\label{eq:xl2'}
\| x \|_{L^2}^2 
\le c_5 \biggl(1 + \int_{S^1} |\partial_t x - X_{H_{s,t}}(x(t)) |^2 + \partial_s H_{s,t}(x(t)) \, dt\biggr)
\end{equation}
for any $x \in C^\infty(S^1,T^*\R^n)$ and $s \in I$. 
Now, it is enough to show that there exists $c_6(H)>0$ such that 
\begin{equation}\label{eq:dxl2}
\| \partial_t x \|_{L^2} ^2 
\le c_6 \biggl(1 + \int_{S^1} |\partial_t x - X_{H_{s,t}}(x(t)) |^2 + \partial_s H_{s,t}(x(t)) \, dt\biggr).
\end{equation}
By using 
\begin{align*}
\| \partial_t x\|_{L^2} &\le \| \partial_t x - X_{H_s}(x) \|_{L^2} + \|X_{H_s}(x)\|_{L^2} \\
                        &\le \| \partial_t x - X_{H_s}(x) \|_{L^2} + 2a(s) \|x\|_{L^2} + \sup_{s,t} \|\Delta_{s,t}\|_{C^1},
\end{align*}
(\ref{eq:dxl2}) follows easily from (\ref{eq:xl2'}). 
\end{proof}

Now we can prove Proposition \ref{prop:L2}. 

\begin{proof}[\textbf{Proof of Proposition \ref{prop:L2}}]
Suppose that $u \in W^{1,3}(I \times S^1, T^*\R^n)$ satisfies 
\[
\partial_s u - J_{s,t}( \partial_t u - X_{H_{s,t}}(u))=0, \qquad
\sup_{s \in I} \big\lvert \mca{A}_{H_s}(u(s)) \big\rvert \le M. 
\]
By elliptic regularity, $u$ is $C^\infty$ on $\interior I \times S^1$. 
By the assumption on $J_{s,t}$, it is easy to see that
\[
|J_{s,t} \partial_s u|^2 \le 4 |\partial_s u|^2, \qquad
|\partial_s u|^2 \le 2 \omega_n(\partial_s u, J_{s,t} \partial_s u).
\]
By Lemma \ref{lem:L2}, the following inequality holds for any $s \in \interior I$: 
\[
\| u(s) \|_{L^2} ^2  + \| \partial_t u(s) \|_{L^2}^2 
\le c_1 \biggl(1 + \int_{S^1} 4|\partial_s u(s,t)|^2 + \partial_s H_{s,t}(u(s,t)) \, dt\biggr).
\]
The RHS is bounded by 
\begin{align*}
\int_{S^1}4 |\partial_s u(s,t)|^2 + \partial_s H_{s,t}(u(s,t)) \, dt
&\le
\int_{S^1} 8 \omega_n(\partial_s u , J_{s,t} \partial_s u) + \partial_s H_{s,t}(u(s,t)) \, dt \\
&\le -8 \partial_s \bigl(\mca{A}_{H_s}(u(s))\bigr).
\end{align*}
By similar arguments, it is easy to show that 
\[
\int_{S^1} |\partial_s u(s,t)|^2 \, dt \le -2 \partial_s \bigl(\mca{A}_{H_s}(u(s))\bigr).
\]
Therefore
\begin{align*}
&\int_I \| u(s) \|_{L^2}^2 + \| \partial_t u(s) \|_{L^2}^2 \, ds 
\le c_1
\int_I  1 - 8\partial_s(\mca{A}_{H_s}(u(s))) \, ds
\le 
c_1(3+ 16M),  \\
&\int_I \| \partial_s u(s) \|_{L^2}^2 \, ds
\le \int_I -2 \partial_s (\mca{A}_{H_s}(u(s)))  \, ds 
\le 4M.
\end{align*}
Thus we get 
\[
\int_{I \times S^1} |u(s,t)|^2 + |\partial_t u(s,t)|^2 + |\partial_s u(s,t)|^2 \, ds dt \le 3c_1 + (16c_1+4)M.
\]
This concludes the proof of Proposition \ref{prop:L2}.
\end{proof}

\subsection{Proof of Lemma \ref{lem:C0_1}, \ref{lem:C0_2}}

First notice that Lemma \ref{lem:C0_1} is a special case of Lemma \ref{lem:C0_2}. 
Hence it is enough to prove Lemma \ref{lem:C0_2}.
First we need the following lemma: 

\begin{lem}\label{lem:AH}
Suppose that $H \in C^\infty(\R \times S^1 \times T^*\R^n)$ is a homotopy from $H^-$ to $H^+$. 
Then, there exists $M>0$ which depends only on $H$ such that 
$\big\lvert \mca{A}_{H_s}(u(s)) \big\rvert \le M$ for any $s \in \R$ and 
$u \in \mca{M}_{H,J}(x_-,x_+)$, where $x_- \in \mca{P}(H^-)$, $x_+ \in \mca{P}(H^+)$. 
\end{lem}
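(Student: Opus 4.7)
The plan is to exploit the standard monotonicity of the action along Floer trajectories, combined with the finiteness of the periodic orbit sets at the two ends.

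First I would observe that by (H0), (H1) and Lemma \ref{lem:orbit}, both $\mca{P}(H^-)$ and $\mca{P}(H^+)$ are finite sets, so the numbers
\[
M_-:= \max_{x \in \mca{P}(H^-)} \bigl| \mca{A}_{H^-}(x) \bigr|, \qquad
M_+:= \max_{x \in \mca{P}(H^+)} \bigl| \mca{A}_{H^+}(x) \bigr|
\]
are finite and depend only on $H^{\pm}$ (hence on $H$, since $H^{\pm}$ are determined by $H$ through (HH1)).

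Next I would carry out the standard computation of $\partial_s \mca{A}_{H_s}(u(s))$ for a solution $u \in \mca{M}_{H,J}(x_-, x_+)$. Using $\partial_s u = J_{s,t}(\partial_t u - X_{H_{s,t}}(u))$ and the fact that $g_{J_{s,t}}(\xi,\eta) = \omega_n(\xi, J_{s,t}\eta)$ is a Riemannian metric (by (JJ2)), one obtains the identity
\[
\partial_s \mca{A}_{H_s}(u(s)) = -\int_{S^1} |\partial_s u(s,t)|^2_{J_{s,t}} \, dt - \int_{S^1} (\partial_s H)(s,t,u(s,t)) \, dt.
\]
By (HH2), the second term is $\le 0$, so $s \mapsto \mca{A}_{H_s}(u(s))$ is monotonically non-increasing.

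Combined with the asymptotic conditions $\lim_{s \to \pm \infty} u(s) = x_\pm$, which give
$\lim_{s \to -\infty} \mca{A}_{H_s}(u(s)) = \mca{A}_{H^-}(x_-)$ and $\lim_{s \to +\infty} \mca{A}_{H_s}(u(s)) = \mca{A}_{H^+}(x_+)$,
this monotonicity sandwiches the action:
\[
\mca{A}_{H^+}(x_+) \le \mca{A}_{H_s}(u(s)) \le \mca{A}_{H^-}(x_-).
\]
Hence $|\mca{A}_{H_s}(u(s))| \le \max(M_-, M_+)=:M$, and this $M$ depends only on $H$, as required.

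The only subtle points will be verifying the action identity (which uses (HH3) to control the behaviour at $s=\pm s_0$ where the homotopy stabilises, and elliptic regularity to justify the integration by parts) and checking that the limits $\lim_{s\to\pm\infty}\mca{A}_{H_s}(u(s))$ really equal the actions of $x_{\pm}$; the latter follows from (HH1), which makes $H_s$ constant in $s$ outside $[-s_0,s_0]$ so that the integrand converges. Neither of these is a genuine obstacle: everything reduces to finiteness of $\mca{P}(H^\pm)$ plus the standard energy identity.
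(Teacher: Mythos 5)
Your proof is correct and follows exactly the same route as the paper: both exploit the finiteness of $\mca{P}(H^\pm)$ and the monotonicity of $s\mapsto\mca{A}_{H_s}(u(s))$ coming from (HH2) and the energy identity. You merely spell out the computation of $\partial_s\mca{A}_{H_s}(u(s))$ that the paper invokes as a known fact.
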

\begin{proof} 
Since $\mca{P}(H^-)$ and $\mca{P}(H^+)$ are finite sets, 
there exists $M>0$ such that 
\[
\mca{A}_{H^-}(x) , 
\mca{A}_{H^+}(y) \in [-M, M] \qquad
\bigl(\forall x \in \mca{P}(H^-), \, \forall y \in \mca{P}(H^+) \bigr).
\]
Since $\mca{A}_{H_s}(u(s))$ is decreasing on $s$, $\mca{A}_{H_s}(u(s)) \in [-M, M]$ for any $u \in \mca{M}_{H,J}(x_-,x_+)$. 
\end{proof} 

Now we prove Lemma \ref{lem:C0_2}. In the course of the proof, constants which we do not need to be specified are denoted as "$\const$". 

\begin{proof}[\textbf{Proof of Lemma \ref{lem:C0_2}}]

To estimate $\| u \|_{C^0}$, it is enough to bound  
$\| u|_{[j,j+1] \times S^1} \|_{C^0}$ 
for each integer $j$. 
Take a cut-off function $\chi$ so that 
\[
\supp \chi \subset (-1, 2), \quad
\chi|_{[0,1]} \equiv 1, \quad 
0 \le \chi \le 1, \quad
-2 \le \chi' \le 2.
\]
Setting $v_j(s,t):= \chi(s-j) u(s,t)$, it is enough to bound $\|v_j \|_{C^0}$
(in the following, we omit the subscript $j$). 
First notice that 
\[
\| v \|_{C^0} \le 
\const \| v \|_{W^{1,3}} \le
\const \| \nabla v \|_{L^3}, 
\]
where the first inequality is a Sobolev estimate, and the second one is Poincar\'{e} inequality. 
By the Calderon-Zygmund inequality, there exists $c>0$ such that 
\[
\| \nabla v \|_{L^3} \le c \bigl( \| (\partial_s - J_\std \partial_t) v \|_{L^3} + \| v\|_{L^3} \bigr). 
\]
We claim that $\varepsilon:=1/2c$ satisfies the requirement in Lemma \ref{lem:C0_2}. 
Suppose that  $\sup_{s,t} \|J_{\std} -J_{s,t} \|_{C^0} \le 1/2c$. Then 
\begin{align*}
 c \| (\partial_s - J_\std \partial_t) v \|_{L^3} &\le c \bigl( \|J_\std -J_{s,t} \|_{C^0}\|\partial_t v\|_{L^3} + \|(\partial_s-J_{s,t} \partial_t)v \|_{L^3}\bigr)  \\
&\le \| \nabla v \|_{L^3}/2 + c \| (\partial_s - J_{s,t} \partial_t)v \|_{L^3}. 
\end{align*}
Hence we obtain 
\[
\| \nabla v \|_{L^3} \le 2c \bigl( \|v\|_{L^3} + \|(\partial_s - J_{s,t} \partial_t) v \|_{L^3} \bigr).
\]
Since $v(s,t)=\chi(s-j) u(s,t)$, it is clear that 
$\| v\|_{L^3} \le \| u \|_{L^3([j-1,j+2] \times S^1)}$. 
On the other hand, since 
\[
(\partial_s - J_{s,t} \partial_t)v(s,t)  = \chi'(s-j) u(s,t) + \chi(s-j) J_{s,t}(u) X_{H_{s,t}}(u), 
\]
and $H$ satisfies (HH3), it is easy to see 
\[
\| (\partial_s - J_{s,t} \partial_t)v \|_{L^3} \le \const ( 1 + \|u\|_{L^3([j-1,j+2] \times S^1)}).           
\]
Then we conclude that 
\[
\| \nabla v\|_{L^3} \le \const ( 1 + \|u\|_{L^3([j-1,j+2] \times S^1)}) \le \const ( 1 + \|u\|_{W^{1,2}([j-1,j+2] \times S^1)}).
\]
Then, Lemma \ref{lem:AH} and Proposition \ref{prop:L2} shows that the RHS is bounded. 
\end{proof}

\subsection{Proof of Lemma \ref{lem:C0_3}, \ref{lem:C0_4}, \ref{lem:C0_5}.}

These lemmas are consequences of the following proposition: 

\begin{prop}\label{prop:C0}
There exists a constant $\varepsilon>0$ which satisfies the following property:
\begin{quote}
Suppose we are given the following data: 
\begin{itemize}
\item $H \in C^\infty(\R \times S^1 \times T^*\R^n)$ which satisfies (HH2), (HH3). 
\item $J=(J_{s,t})_{(s,t) \in \R \times S^1}$ which satisfies (JJ2) and 
$\sup_{(s,t)} \| J_{s,t} - J_{\std} \|_{C^0} < \varepsilon$.
\item Constants $M_0, M_1>0$.  
\end{itemize}
Then, there exists a constant $c(H, M_0, M_1)>0$ such that, 
for any $\sigma \in \R$ and $u \in W^{1,3}([\sigma,\infty) \times S^1, T^*\R^n)$ satisfying 
\begin{align*}
&\partial_s u - J_{s,t}(\partial_t u - X_{H_{s,t}}(u))=0, \quad \sup_{s \ge \sigma} \big\lvert \mca{A}_{H_s}(u(s)) \big\rvert \le M_0, \\
&\| \pi(u(\sigma))\|_{W^{2/3,3}(S^1,\R^n)} \le M_1, 
\end{align*}
there holds $\| u \|_{C^0} \le c(H, M_0, M_1)$. 
\end{quote}
\end{prop}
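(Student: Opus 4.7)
The strategy is to combine the global $W^{1,2}$ estimate from Proposition \ref{prop:L2} with local $C^0$ estimates, splitting the half-cylinder into the interior region $[\sigma+1, \infty) \times S^1$ and the boundary strip $[\sigma, \sigma+1] \times S^1$. The hypothesis $\sup_{s \ge \sigma} |\mca{A}_{H_s}(u(s))| \le M_0$ is exactly what Proposition \ref{prop:L2} requires, so applying it on each sub-interval of length $\le 3$ inside $[\sigma, \infty)$ yields a uniform bound $\|u\|_{W^{1,2}([\sigma+j, \sigma+j+3] \times S^1)} \le c_0(H, M_0)$ for every integer $j \ge 0$. In particular, the two-dimensional Sobolev embedding $W^{1,2} \hookrightarrow L^3$ gives $\|u\|_{L^3([\sigma+j, \sigma+j+1] \times S^1)} \le \const$ uniformly in $j$.

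For the interior region I would repeat the argument of Lemma \ref{lem:C0_2} verbatim: for each integer $j \ge \sigma+1$, multiply $u$ by a cutoff $\chi(s-j)$ supported in $[j-1, j+2] \subset [\sigma, \infty)$, regard the product as a compactly supported map on $\R \times S^1$, and apply the interior Calderon-Zygmund inequality for $\partial_s - J_\std \partial_t$. With $\varepsilon = 1/(2c)$ (where $c$ is the Calderon-Zygmund constant) the $\partial_t v$ term coming from the perturbation $J_{s,t} - J_\std$ is absorbed, and the remaining terms are bounded via the $L^3$ bound above together with the growth condition (HH3) on $X_{H_{s,t}}$. This yields $\|u\|_{C^0([\sigma+1, \infty) \times S^1)} \le c_1(H, M_0)$, exactly as in Lemma \ref{lem:C0_2}.

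The remaining boundary strip $[\sigma, \sigma+1] \times S^1$ is the main obstacle, since the cutoff used above would need support extending below $s = \sigma$. I would instead use a one-sided cutoff $\chi$ with $\chi \equiv 1$ on $[0,1]$, $\supp \chi \subset [0,2]$, $\chi(0) = 1$, so that $v := \chi(\,\cdot\, - \sigma)\, u$ is defined on $[\sigma, \sigma+2] \times S^1$, vanishes at $s = \sigma+2$, and satisfies $\pi(v(\sigma, \cdot)) = \pi(u(\sigma, \cdot))$. The key analytic input is then the boundary Calderon-Zygmund estimate for $\partial_s - J_\std \partial_t$ on $[\sigma, \sigma+2] \times S^1$ with the Lagrangian boundary condition on $\{s=\sigma\}$ given by the cotangent fibers,
\begin{equation*}
\|\nabla v\|_{L^3} \le c \bigl( \|(\partial_s - J_\std \partial_t) v\|_{L^3} + \|v\|_{L^3} + \|\pi(v(\sigma, \cdot))\|_{W^{2/3,3}(S^1)} \bigr),
\end{equation*}
which is the Cauchy-Riemann analogue of the Dirichlet estimate for the Laplacian and can be derived by Schwarz reflection across $\{s = \sigma\}$ (odd in the $q$-component, even in the $p$-component), reducing it to the interior Calderon-Zygmund inequality used in the previous paragraph. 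Choosing $\varepsilon$ small enough to absorb the $\partial_t v$ term arising from $J_{s,t} - J_\std$, the right-hand side is controlled by $\const \bigl( 1 + \|u\|_{L^3([\sigma,\sigma+2] \times S^1)} + \|\pi(u(\sigma))\|_{W^{2/3,3}} \bigr) \le \const(H, M_0, M_1)$, and the Sobolev embedding $W^{1,3} \hookrightarrow C^0$ in two dimensions finishes the bound on the boundary strip. Combining this with the interior bound of the preceding paragraph produces the desired constant $c(H, M_0, M_1)$. The subtlety to watch is that the Schwarz-reflected almost complex structure $J^*$ need not be continuous across $\{s=\sigma\}$, but only its $C^0$-distance to $J_\std$ enters the perturbation argument, and this is preserved under the reflection, so the absorption step goes through.
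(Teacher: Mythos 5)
Your plan matches the paper's proof in structure: reduce to strips of the form $[\sigma+j,\sigma+j+1]\times S^1$, handle $j\ge 1$ exactly as in Lemma \ref{lem:C0_2}, and treat the boundary strip $j=0$ separately via a Calderon--Zygmund estimate compatible with the Lagrangian (cotangent-fiber) boundary condition at $s=\sigma$, with $\varepsilon=1/(2c)$ absorbing the $J_{s,t}-J_\std$ perturbation and Proposition \ref{prop:L2} feeding in the $L^3$ bound. That is precisely what the paper does.

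The one place where you have glossed over the content is the claim that the boundary estimate
$\|\nabla v\|_{L^3} \le c\bigl( \|(\partial_s - J_\std \partial_t)v\|_{L^3} + \|v\|_{L^3} + \|\pi(v(\sigma,\cdot))\|_{W^{2/3,3}}\bigr)$
follows ``by Schwarz reflection.'' Odd reflection of the $q$-component across $\{s=\sigma\}$ only produces a $W^{1,3}$ function when that trace vanishes; for a nonzero trace the reflected map has a jump and the interior Calderon--Zygmund estimate does not directly apply. The correct (and standard) way to obtain the estimate you wrote is exactly the step the paper carries out explicitly: first extend $\pi(u(\sigma,\cdot))$ to a $W^{1,3}$ map $\tilde u_q$ on the half-cylinder with $\|\tilde u_q\|_{W^{1,3}}\lesssim \|\pi(u(\sigma,\cdot))\|_{W^{2/3,3}}$ (Sobolev trace extension), subtract it so that the new $q$-component vanishes on $\{s=\sigma\}$, and only then invoke the reflection/half-plane Calderon--Zygmund inequality. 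Once that subtraction is in place your argument goes through; note also that the reflection is applied only to the constant-coefficient operator $\partial_s - J_\std\partial_t$, so the worry about reflecting $J_{s,t}$ does not arise---the $J_{s,t}-J_\std$ term is absorbed on the half-cylinder after the estimate, not before.
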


In this subsection, we deduce Lemmas \ref{lem:C0_3}, \ref{lem:C0_4}, \ref{lem:C0_5} from Proposition \ref{prop:C0}.
First notice that Lemma \ref{lem:C0_3} is a special case of Lemma \ref{lem:C0_5}. 
Hence it is enough to prove Lemma \ref{lem:C0_4} and Lemma \ref{lem:C0_5}. 

\begin{proof}[\textbf{Proof of Lemma \ref{lem:C0_4}.}]
Since $\mca{P}(L^m)$ and $\mca{P}(H^{m+1})$ are finite sets, there exists $M>0$ such that
\[
\mca{S}_{L^m}(\gamma), \, \mca{A}_{H^{m+1}}(x) \in [-M, M]
\]
for any $\gamma \in \mca{P}(L^m)$, $x \in \mca{P}(H^{m+1})$. 
For any $(\alpha, u, v) \in \mca{N}^0(\gamma, x)$, there holds 
\begin{align*}
\mca{S}_{L^m}(\gamma) \ge \mca{S}_{L^m}(u(0)) &\ge \mca{S}_{L^{m+1}}(u(0)) \\
&\ge \mca{S}_{L^{m+1}}(u(\alpha)) \ge \mca{A}_{H^{m+1}}(v(0)) \ge \mca{A}_{H^{m+1}}(x).
\end{align*}
In particular, $\mca{S}_{L^{m+1}}(u(\alpha))$ is bounded from below. 
Now we use the following lemma: 

\begin{lem}\label{lem:bddbelow}
For any $\gamma \in \mca{P}(L^m)$ and $d \in \R$, 
$\varphi^{X^{m+1}}_{[0,\infty)}(W^u(\gamma :X^m)) \cap \{ \mca{S}_{L^{m+1}} \ge d \}$ is precompact in $\Lambda(\R^n)$. 
\end{lem}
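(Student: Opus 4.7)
The plan is to first bound the values of $\mca{S}_{L^{m+1}}$ on the set in question, then decompose each element as the forward $X^{m+1}$-flow applied to a point in the unstable manifold, and finally invoke Palais--Smale compactness for both factors.

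Set $A := \varphi^{X^{m+1}}_{[0,\infty)}(W^u(\gamma:X^m)) \cap \{\mca{S}_{L^{m+1}} \ge d\}$. For any $q = \varphi^{X^{m+1}}_s(p) \in A$ with $p \in W^u(\gamma:X^m)$ and $s \ge 0$, the pseudo-gradient property of $X^{m+1}$ for $\mca{S}_{L^{m+1}}$ implies $\mca{S}_{L^{m+1}}(p) \ge \mca{S}_{L^{m+1}}(q) \ge d$, while the pseudo-gradient property of $X^m$ for $\mca{S}_{L^m}$ combined with the pointwise inequality $L^m > L^{m+1}$ yields
\[
\mca{S}_{L^{m+1}}(p) \le \mca{S}_{L^m}(p) \le \mca{S}_{L^m}(\gamma).
\]
Hence $p$ lies in $B := W^u(\gamma:X^m) \cap \{d \le \mca{S}_{L^m} \le \mca{S}_{L^m}(\gamma)\}$, which is precompact in $\Lambda(\R^n)$ by the standard Morse-theoretic description of unstable manifolds of nondegenerate critical points for PS pseudo-gradients on Hilbert manifolds (cf.\ \cite{AbM}): truncated by the action filtration, it is a continuous image of a compact finite-dimensional ball, modulo broken trajectories that terminate at one of the (finitely many, by PS and nondegeneracy) lower critical points in $\mca{P}(L^m) \cap \{\mca{S}_{L^m} \ge d\}$.

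Given a sequence $(q_k)_k \subset A$, write $q_k = \varphi^{X^{m+1}}_{s_k}(p_k)$ with $p_k \in B$ and $s_k \ge 0$, and extract a subsequence with $p_k \to p_\infty \in \overline{B} \subset \Lambda(\R^n)$. If the times $s_k$ remain bounded, pass to a further subsequence so that $s_k \to s_\infty \in [0,\infty)$ and use continuity of $\varphi^{X^{m+1}}$ to obtain $q_k \to \varphi^{X^{m+1}}_{s_\infty}(p_\infty)$. The main obstacle is the case $s_k \to \infty$: even though the flow time is unbounded, the total action drop along each trajectory is at most $\mca{S}_{L^m}(\gamma) - d$, so by the lower bound $-d\mca{S}_{L^{m+1}}(X^{m+1}) \ge c > 0$ valid outside small neighborhoods of $\mca{P}(L^{m+1})$ (as in the construction of Lemma \ref{lem:vectorfield}), the endpoints $q_k$ are forced into arbitrarily small neighborhoods of critical points of $\mca{S}_{L^{m+1}}$ with action in $[d, \mca{S}_{L^m}(\gamma)]$. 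As only finitely many such critical points exist, a diagonal subsequence of $(q_k)_k$ converges in $\Lambda(\R^n)$ to one of them, completing the precompactness argument.
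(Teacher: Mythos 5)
Your overall decomposition is exactly the paper's: write each point as $\varphi^{X^{m+1}}_s(p)$ with $p \in W^u(\gamma:X^m)$, derive the two-sided action bound $d \le \mca{S}_{L^{m+1}}(p) \le \mca{S}_{L^m}(p) \le \mca{S}_{L^m}(\gamma)$, invoke precompactness of the truncated unstable manifold (Corollary~2.3 in \cite{AbM}), and then deal with the flow time. The paper disposes of the flow-time step by citing Proposition~2.2(2) in \cite{AbM}; you instead try to carry it out by hand, and that is where a genuine gap appears.

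In the $s_k \to \infty$ case you argue: the total action drop is $\le \mca{S}_{L^m}(\gamma)-d$, and outside small neighborhoods of $\mca{P}(L^{m+1})$ one has $-d\mca{S}_{L^{m+1}}(X^{m+1}) \ge c>0$, so ``the endpoints $q_k$ are forced into arbitrarily small neighborhoods of critical points.'' This inference does not hold. The drop-rate bound only controls the total \emph{time} spent outside the neighborhoods; it says nothing about where the trajectory sits at the terminal time. A trajectory may linger near a critical point for most of $[0,s_k]$ and then exit in the final (uniformly bounded) stretch, landing at a point that is bounded away from every critical point yet still has action $\ge d$. The model for this is a nondegenerate saddle: take $p_k \to p_\infty$ with $p_\infty$ on the local stable manifold of a critical point $x_0$; then $\varphi_{s_k}(p_k)$ with $s_k \to \infty$ typically converges to a point of $W^u(x_0)\setminus\{x_0\}$, not to $x_0$. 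So the statement that a diagonal subsequence of $(q_k)_k$ converges to one of the finitely many critical points is false as stated.

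The correct statement — which is what Proposition~2.2(2) in \cite{AbM} supplies — is that the endpoints are forced near the \emph{local unstable manifolds} of the critical points encountered, not near the critical points themselves. Since every $\gamma' \in \mca{P}(L^{m+1})$ has finite Morse index (condition (3) of Lemma~\ref{lem:vectorfield}), these local unstable manifolds are finite-dimensional, hence locally compact, and that is the compactness that rescues the argument: one combines it with the precompactness of $(p_k)_k$ and the boundedness of the residual flow time after the last exit from the neighborhood. What your proof is missing is precisely this hyperbolic $\lambda$-lemma-type estimate near the critical points; without it, the assertion about $q_k$ accumulating on $\mca{P}(L^{m+1})$ is unjustified, and the diagonal extraction never gets off the ground.
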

\begin{proof}
This lemma is an immediate consequence of Proposition 2.2, Corollary 2.3 in \cite{AbM}.
Let $(\gamma_k, t_k)_{k \ge 1}$ be a sequence, 
where $\gamma_k \in W^u(\gamma :X^m)$ and $t_k \ge 0$, such that, with 
$\gamma'_k:= \varphi^{X^{m+1}}_{t_k}(\gamma_k)$, $\mca{S}_{L^{m+1}}(\gamma'_k) \ge d$. 
Since $\mca{S}_{L^m}(\gamma_k) \ge \mca{S}_{L^{m+1}}(\gamma'_k) \ge d$, 
Corollary 2.3 in \cite{AbM} shows that $(\gamma_k)_k$ has a convergent subsequence. 
Then, Proposition 2.2 (2) in \cite{AbM} implies the conclusion. 
\end{proof}

Since $\mca{S}_{L^{m+1}}(u(\alpha))$ is bounded from below for any $(\alpha,u,v) \in \mca{N}^0(\gamma,x)$, 
Lemma \ref{lem:bddbelow} shows that 
$\| u(\alpha) \|_{W^{1,2}}$ is bounded for any $(\alpha,u,v)$. Therefore, 
\[
\| \pi(v(0)) \|_{W^{2/3,3}} \le \const \| \pi(v(0)) \|_{W^{1,2}} = \const \| u(\alpha) \|_{W^{1,2}}
\]
is bounded from above (the first inequality is a Sobolev estimate). 
On the other hand $\sup_{s \ge 0} \big\lvert \mca{A}_{H^{m+1}}(v(s)) \big\rvert \le M$.  
Hence Proposition \ref{prop:C0} shows that $\| v \|_{C^0}$ is bounded. 
\end{proof}

\begin{proof}[\textbf{Proof of Lemma \ref{lem:C0_5}.}]
Suppose that $(\beta,w) \in \mca{N}^1(\gamma,x)$. Then, there holds
\[
\mca{S}_{L^m}(\gamma) \ge \mca{S}_{L^m}(\pi(w(\beta))) \ge \mca{A}_{H^m}(w(\beta)) \ge \mca{A}_{H_\beta}(w(\beta)) \ge \mca{A}_{H^{m+1}}(x).
\]
Then, $\sup_{s \ge \beta} \big\lvert \mca{A}_{H_s}(w(s)) \big\rvert$ is bounded. 
Moreover, since $\mca{S}_{L^m}(\pi(w(\beta)))$ is bounded from below, 
$\| \pi(w(\beta)) \|_{W^{1,2}}$ is bounded. 
Hence $\| w\|_{C^0}$ is bounded. 
\end{proof}

\subsection{Proof of Proposition \ref{prop:C0}}
Finally we prove Proposition \ref{prop:C0}. 

It is enough to bound  
$\sup_{(s-\sigma,t) \in [j,j+1] \times S^1} |u(s,t)|$ for each integer $j \ge 0$. 
The proof for $j \ge 1$ is as the proof of Lemma \ref{lem:C0_2}.  
Hence we only consider the case $j=0$. 
We denote the $q$-component and $p$-component of $u$ by $u_q$, $u_p$, i.e.
$u(s,t)=\bigl(u_q(s,t), u_p(s,t)\bigr)$. 

By the theory of Sobolev traces, there exists 
$\tilde{u}_q(s,t) \in W^{1,3}([\sigma,\infty) \times S^1:\R^n)$ such that 
$\tilde{u}_q(\sigma,t) = u_q(\sigma,t)$ for any $t \in S^1$, and there holds 
\[
\| \tilde{u}_q \|_{W^{1,3}([\sigma,\infty) \times S^1:\R^n)} \le 
\const \| u_q(\sigma) \|_{W^{2/3,3}(S^1:\R^n)}. 
\]
Take a cut-off function $\chi \in C^\infty([0,\infty))$ such that
\[
\supp \chi \subset [0,2), \qquad
\chi|_{[0,1]} \equiv 1 , \qquad
0 \le \chi \le 1, \qquad 
-2 \le \chi' \le 0. 
\]
We set $w(s,t):=\chi(s-\sigma)(u_q(s,t) - \tilde{u}_q(s,t), u_p(s,t))$. 
Since 
\[
\| u \|_{C^0([\sigma,\sigma+1] \times S^1)} \le \|w \|_{C^0} + \| \tilde{u}_q \|_{C^0([\sigma,\sigma+1] \times S^1)}
\le  \| w \|_{C^0} + \const \| \tilde{u}_q \|_{W^{1,3}}, 
\]
it is enough to bound $\| w \|_{C^0}$. 
It is easy to see that 
\[
\| w \|_{C^0} \le 
\const \| w \|_{W^{1,3}} \le
\const \| \nabla w \|_{L^3}
\]
by the Sobolev estimate and the Poincar\'{e} inequality. 
Since $w_q(\sigma,t)=(0,\ldots,0)$, 
we can use Calderon-Zygmund inequality to obtain 
\[
\| \nabla w \|_{L^3} \le c \bigl( \| (\partial_s - J_\std \partial_t) w \|_{L^3} + \| w \|_{L^3} \bigr). 
\]
We claim that $\varepsilon:=1/2c$ satisfies the requirement in Proposition \ref{prop:C0}. 

If $\sup_{s,t} \| J_{s,t} - J_{\std} \|_{C^0} \le 1/2c$, there holds 
\[
\| \nabla w \|_{L^3} \le 2c \bigl( \|w\|_{L^3} + \|(\partial_s - J_{s,t} \partial_t) w \|_{L^3} \bigr).
\]
We divide $(\partial_s - J_{s,t} \partial_t)w$ into two parts: 
\[
(\partial_s - J_{s,t} \partial_t)w
= \chi'(s-\sigma) (u_q-\tilde{u}_q, u_p) + \chi(s-\sigma) (\partial_s - J_{s,t} \partial_t) (u_q - \tilde{u}_q, u_p). 
\]
We bound the first and second term on the RHS: 
\begin{align*}
\|\text{first term} \|_{L^3} &\le 
\const ( \|u\|_{L^3([\sigma,\sigma+2] \times S^1)} + \| \tilde{u}_q \|_{L^3}), \\
\|\text{second term} \|_{L^3} &\le
\const( \| J_{s,t}(u) X_{H_{s,t}}(u) \|_{L^3([\sigma,\sigma+2] \times S^1)} + \| \tilde{u}_q \|_{W^{1,3}} ) \\
&\le \const ( 1+ \| u \|_{L^3([\sigma,\sigma+2] \times S^1)} + \| \tilde{u}_q \|_{W^{1,3}} ). 
\end{align*}
Hence $\| \nabla w \|_{L^3}$ is bounded by
\begin{align*}
\| \nabla w \|_{L^3}  &\le \const \bigl( 1 + \| u \|_{L^3([\sigma,\sigma+2]\times S^1)} + \| \tilde{u}_q \|_{W^{1,3}} \bigr) \\
                      &\le \const \bigl( 1 + \| u \|_{W^{1,2}([\sigma,\sigma+2] \times S^1)} + \| u_q (\sigma) \|_{W^{2/3,3}(S^1)} \bigr). 
\end{align*}
Since $\sup_{s \ge \sigma} \big\lvert \mca{A}_{H_s}(u(s)) \big\rvert$ is bounded by assumption, 
Proposition \ref{prop:L2} shows that $ \| u \|_{W^{1,2}([\sigma,\sigma+2] \times S^1)}$ is bounded. 
On the other hand, $\| u_q (\sigma) \|_{W^{2/3,3}(S^1)}$ is bounded by assumption. 
Hence the RHS is bounded.  
\qed

\section{Floer-Hofer capacity and periodic billiard trajectory} 
The goal of this section is to prove Proposition \ref{prop:billiard} and Corollary \ref{cor:FH}. 

\subsection{Symplectic homology of RCT-domains}

In this subsection, we collect some results on symplectic homology of RCT (restricted contact type) domains, 
which are essentially established in \cite{Her}. 

\begin{defn}
Let $U$ be a bounded domain in $T^*\R^n$ with a smooth boundary. 
$U$ is called RCT (restricted contact type), when there exists a vector field $Z$ on $T^*\R^n$ such that 
$L_Z \omega_n = \omega_n$ and $Z$ points strictly outwards on $\partial U$. 
\end{defn} 

Let $U$ be an RCT-domain in $T^*\R^n$. 
Then, $\mca{R}_{\partial U}:= \ker (\omega|_{\partial U})$ is a $1$-dimensional foliation on $\partial U$, which is called \textit{characteristic foliation}. 
$\mca{R}_{\partial U}$ has a canonical orientation: 
for any $p \in \partial U$, $\xi \in \mca{R}_{\partial U}(p)$ is positive if and only if 
$\omega_n(Z(p),\xi)>0$.  
$\mca{P}_{\partial U}$ denotes the set of $m$-fold coverings of closed leaves of $\mca{R}_{\partial U}$, where $m \ge 1$. 

For each $\gamma \in \mca{P}_{\partial U}$, 
$\mca{A}(\gamma):= \int_\gamma i_Z \omega_n$ is called the \textit{action} of $\gamma$. 
By our definition of orientation of $\mca{R}_{\partial U}$, $\mca{A}(\gamma)>0$ for any $\gamma \in \mca{P}_{\partial U}$. 

One can also define the Conley-Zehnder index $\ind_{\CZ}(\gamma)$ for any $\gamma \in \mca{P}_{\partial U}$, 
even when $\gamma$ is degenerate. For details, see Section 3.2 in \cite{Ir}. 
For each integer $k$, we set 
\[
\Sigma_k(\partial U):= \{ \mca{A}(\gamma) \mid \gamma \in \mca{P}_{\partial U}, \ind_{\CZ}(\gamma) \le k \} , \qquad
\Sigma(\partial U):= \bigcup_{k \in \Z} \Sigma_k(\partial U).
\]
$\Sigma(\partial U) \subset \R$ is called the \textit{action spectrum}. 

\begin{lem}\label{lem:RCT}
For any RCT-domain $U$ in $T^*\R^n$, the following statements hold: 
\begin{enumerate}
\item For any $0<a<\min \Sigma_{n+1}(\partial U)$, $\SH_n^{[-1,a)}(U) \cong \Z_2$. 
\item Let $V$ be another RCT-domain in $T^*\R^n$ such that $V \subset U$. Then, for any $a$ satisfying 
\[
0 < a < \min \Sigma_{n+1}(\partial U), \min \Sigma_{n+1}(\partial V), 
\]
the natural homomorphism $\SH_n^{[-1,a)}(U) \to \SH_n^{[-1,a)}(V)$ is an isomorphism.
\item For any $0< \varepsilon < \min \Sigma_{n+1}(\partial U)$, 
\[ 
c_{\FH}(U) = \inf \{ a \mid \text{$\SH_n^{[-1,\varepsilon)}(U) \to \SH_n^{[-1,a)}(U)$ vanishes} \}.
\]
\item $c_{\FH}(U) \in \Sigma_{n+1}(\partial U)$. 
\end{enumerate}
\end{lem}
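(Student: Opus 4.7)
The plan is to follow the approach of \cite{Her} and construct a cofinal family in $\mca{H}_U$ adapted to the RCT structure: using the Liouville field $Z$, one identifies a neighborhood of $\partial U$ with $\partial U \times (-\delta,\delta)$ so that $\omega_n = d(e^r\alpha)$, where $\alpha$ is a contact form on $\partial U$ whose Reeb orbits are parametrized by $\mca{P}_{\partial U}$. A cofinal sequence $(H_k)_k$ is then obtained by taking $H_k$ to be a $C^2$-small negative Morse function on $\bar U$, extended in the collar as a function of $r$ alone with slope increasing to $+\infty$, and modified outside a large ball so that (H1) holds. The one-periodic orbits fall into two classes: critical points of $H_k|_{\bar U}$ (with action close to $0$ and CZ-index equal to Morse index minus $n$) and perturbed closed Reeb orbits on level sets $\{r = r_0\}$ whose actions and CZ-indices are controlled by the corresponding elements of $\mca{P}_{\partial U}$.

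For (1), I would arrange the slope of $H_k$ in the collar so that no Reeb-type generator of CZ-index $\leq n+1$ has action in $[-1,a)$; this is possible provided the slope stays below $\min \Sigma_{n+1}(\partial U)$ over the relevant $r$-range. Then $\CF^{[-1,a)}_{\leq n+1}(H_k)$ coincides with the Morse complex of $H_k|_{\bar U}$ shifted by $-n$, and passing to the cofinal direct limit yields $\SH^{[-1,a)}_*(U) \cong H_{*+n}(\bar U, \partial U; \Z_2)$ in degrees $\leq n+1$; in particular $\SH^{[-1,a)}_n(U) \cong H_{2n}(\bar U, \partial U) \cong \Z_2$ by connectedness of $U$. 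Part (2) follows from the same family, refined so that $H_k < 0$ on $\bar U$ (hence on $\bar V$), so that the relevant Morse generators are the same for both $U$ and $V$; the continuation map is then the inclusion-induced isomorphism on top relative homology.

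For (3), I would use the short exact sequence of chain complexes $0 \to \CF_*^{[-1,\varepsilon)}(H) \to \CF_*^{[-1,a)}(H) \to \CF_*^{[\varepsilon,a)}(H) \to 0$, pass to the direct limit over $\mca{H}_U$ and the inverse limit $\varepsilon \to 0+$, obtaining a long exact sequence
\[
\SH^{(0,a)}_{n+1}(U) \xrightarrow{\,\delta\,} \SH^{[-1,\varepsilon)}_n(U) \to \SH^{[-1,a)}_n(U) \to \SH^{(0,a)}_n(U).
\]
The second term is $\Z_2$ by (1). Naturality with respect to the inclusion $B^{2n}(p:\varepsilon') \hookrightarrow U$, combined with (2) applied to the nested RCT pair, identifies the left $\Z_2$ with the $\Z_2$ defining $\Theta^a(p)$ and identifies $\delta$ with the evaluation map $\SH^{(0,a)}_{n+1}(U) \to \Theta^a(p)$. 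Hence the natural map $\SH^{[-1,\varepsilon)}_n(U) \to \SH^{[-1,a)}_n(U)$ vanishes if and only if $\delta$ is surjective if and only if $\SH^{(0,a)}_{n+1}(U) \to \Theta^a(p)$ is onto, as required.

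For (4), I would show that $a \mapsto \SH^{[-1,a)}_n(U)$ is locally constant on $(0,\infty) \setminus \Sigma_{n+1}(\partial U)$: if $a_0 \notin \Sigma_{n+1}(\partial U)$, then for $k$ large every generator of $\CF^{[-1,a_0+\eta]}_{\leq n+1}(H_k)$ of nonzero action lies close to some element of $\Sigma_{n+1}(\partial U)$, which by definition is bounded away from $a_0$, so the inclusion-induced map between consecutive action windows is an isomorphism near $a_0$. Consequently the set of $a$ for which $\SH_n^{[-1,\varepsilon)}(U) \to \SH_n^{[-1,a)}(U)$ vanishes is open off $\Sigma_{n+1}(\partial U)$, so its infimum $c_{\FH}(U)$ lies in $\Sigma_{n+1}(\partial U)$. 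The main obstacle I anticipate is the identification of the connecting map $\delta$ with the Floer--Hofer evaluation in (3), since one must carefully reconcile two different limiting procedures (the direct limit over $\mca{H}_U$ and the inverse limit $\varepsilon \to 0+$) with the monotonicity map induced by $B^{2n}(p:\varepsilon') \subset U$; a secondary technical point is producing cofinal families that are simultaneously RCT-adapted near $\partial U$ and quadratic at infinity as required by (H1), for which one needs the quadratic region to lie far enough outside that its own orbits contribute only at action levels well above $a$.
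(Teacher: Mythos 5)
Your proposal follows essentially the same route the paper takes, which is to delegate to Hermann's Proposition~4.7 and Proposition~5.7 (for (1), (2), (3)) and to Theorem~8 of \cite{Ir} (for (4)); the paper's proof is in fact just those citations, so your reconstruction of the underlying arguments is appropriate, and the technical worries you flag at the end (reconciling the cofinal family with condition (H1), and the order of the two limits in (3)) are precisely the points the paper handles implicitly via Remarks~\ref{rem:HaminFH} and~\ref{rem:SHinFH}.

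One point in your treatment of (1) is imprecisely stated and worth correcting. You write that one should ``arrange the slope of $H_k$ in the collar so that no Reeb-type generator of CZ-index $\le n+1$ has action in $[-1,a)$,'' by keeping the slope below $\min\Sigma_{n+1}(\partial U)$ over ``the relevant $r$-range.'' This sounds like you want to prevent such orbits from appearing at all, which is not what happens (and cannot happen, since the slope must tend to $+\infty$ along the cofinal family to compute $\SH$). The correct mechanism is that such orbits \emph{do} appear, but they are harmless: a nonconstant one-periodic orbit of $H_k$ in the collar corresponds to a closed Reeb orbit of some period $T$, its Floer action is close to $T$, and if its Conley--Zehnder index is $\le n+1$ then by definition $T \in \Sigma_{n+1}(\partial U)$, hence $T \ge \min\Sigma_{n+1}(\partial U) > a$. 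So its action falls \emph{above} the window $[-1,a)$ independently of the slope of $H_k$. In other words, the weakening from $\min\Sigma(\partial U)$ to $\min\Sigma_{n+1}(\partial U)$ is exactly what allows Reeb generators of CZ-index $\ge n+2$ to enter the window without affecting $\SH_n$, because the chain group in degree $n+1$ is zero (no Morse critical points of index $2n+1$, no low-index Reeb orbits with action $<a$). With that clarification, the rest of your outline — identifying the degree-$n$ chain group with the top of the Morse complex of $H_k|_{\bar U}$, passing to $H_{2n}(\bar U,\partial U)\cong\Z_2$, using the same adapted family for (2), the long-exact-sequence/connecting-map argument for (3), and the local-constancy-off-$\Sigma_{n+1}$ argument for (4) — matches the standard Hermann/Irie proofs that the paper cites.
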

\begin{proof}
In Proposition 4.7 in \cite{Her}, the following statement is proved: 
\begin{quote}
Let $U$ be an RCT-domain, and $0<a < \min \Sigma(\partial U)$. 
Then, $\SH_*^{[-1,a)}(U) \cong H_{n+*}(U,\partial U)$. 
\end{quote}
(1) in our Lemma \ref{lem:RCT} can be proved in the same way as this statement in \cite{Her}, 
although our assumption $a< \min \Sigma_{n+1}(\partial U)$ is weaker. 
(2) also follows directly from the proof of Proposition 4.7 in \cite{Her}. For details, see \cite{Her} pp.360 -- 361. 
(3) is Proposition 5.7 in \cite{Her}. 
(4) is proved in exactly the same way as Theorem 8 in \cite{Ir}. 
\end{proof}

\subsection{Periodic billiard trajectory}
The goal of this subsection is to prove Proposition \ref{prop:billiard}. 
Throughout this subsection, $V$ denotes a bounded domain in $\R^n$ with smooth boundary. 
First we clarify the definition of periodic billiard trajectory. 

\begin{defn}\label{def:billiard}
A continuous map $\gamma: \R/T\Z \to \bar{V}$ is called a \textit{periodic billiard trajecotory} if there exists a 
finite set $\mca{B} \subset \R/T\Z$ such that the following holds: 
\begin{itemize}
\item On $(\R/T \Z) \setminus \mca{B}$, there holds $\ddot{\gamma} \equiv 0$ and $|\dot{\gamma}| \equiv 1$. 
\item For any $t \in \mca{B}$, $\dot{\gamma}_{\pm}(t):= \lim_{h \to \pm 0} \dot{\gamma}(t+h)$ satisfies the law of reflection: 
\[
\dot{\gamma}_+(t) + \dot{\gamma}_-(t) \in T_{\gamma(t)} \partial V, \qquad
\dot{\gamma}_+(t) - \dot{\gamma}_-(t) \in (T_{\gamma(t)} \partial V)^{\perp} \setminus \{0\}. 
\]
\end{itemize}
Elements of $\mca{B}$ are called as \textit{bounce times}, and $T$ is called the \textit{length} of $\gamma$. 
\end{defn}

First we construct a sequence of RCT-domains which approximates $D^*V$. 
Fix a positive smooth function $h: V \to \R_{>0}$ and a compactly supported vector field $Z$ on $\R^n$ so that: 
\begin{itemize} 
\item $h(q) = \dist(q,\partial V)^{-2}$ when $q$ is sufficiently close to $\partial V$. 
\item $Z$ points strictly outwards on $\partial V$.
\item $dh(Z) \ge 0$ everywhere on $V$. 
\item Setting $Z = \sum_j Z_j \partial_{q_j}$, $\sup_{q \in V} | \partial_{q_i} Z_j(q)| \le 1/2n$. 
\end{itemize}
For any $\varepsilon>0$, we set $H_\varepsilon(q,p):=|p|^2 /2 + \varepsilon h(q)$, 
and $U_\varepsilon:= \{H_\varepsilon  < 1/2 \} \subset D^*V$. 

We show that $U_\varepsilon$ is an RCT-domain. 
We define $H_Z \in C^\infty(T^*\R^n)$ by $H_Z(q,p):=p \cdot Z(q)$. 
We define a vector field $\bar{Z}$ on $T^*\R^n$ by 
\[
\bar{Z}:= \sum_i p_i \partial_{p_i} + X_{H_Z}. 
\]
It is easy to check that $L_{\bar{Z}}\omega_n = \omega_n$. 
$(\varphi_t^{\bar{Z}})_t$ denotes the flow generated by $\bar{Z}$, i.e. 
$\varphi_0^{\bar{Z}} = \id_{T^*\R^n}$, and 
$\partial_t \varphi_t^{\bar{Z}} = \bar{Z}(\varphi_t^{\bar{Z}})$. 

\begin{lem}\label{lem:Z}
When $\varepsilon>0$ is sufficiently small, $dH_\varepsilon(\bar{Z})>0$ on $\{H_\varepsilon =1/2\}$. 
In particular, $U^-_\varepsilon = \{H_\varepsilon < 1/2 \}$ is an RCT-domain. 
There exists $T_\varepsilon>0$ such that 
$\varphi_{T_\varepsilon}^{\bar{Z}}(U^-_\varepsilon) \supset D^*V$. 
Moreover, we can take $T_\varepsilon$ so that $\lim_{\varepsilon \to 0} T_\varepsilon =0$. 
\end{lem}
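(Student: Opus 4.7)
The plan is to verify the three assertions by direct computation, starting with the explicit form of $\bar Z$. In coordinates, $X_{H_Z} = \sum_j Z_j \partial_{q_j} - \sum_i \bigl(\sum_j p_j \partial_{q_i} Z_j\bigr) \partial_{p_i}$, so
\[
\bar Z = \sum_j Z_j \partial_{q_j} + \sum_i \bigl( p_i - \sum_j p_j \partial_{q_i} Z_j \bigr) \partial_{p_i},
\]
and therefore
\[
dH_\varepsilon(\bar Z) = |p|^2 - \sum_{i,j} p_i p_j \, \partial_{q_i} Z_j + \varepsilon \, dh(Z).
\]
The hypothesis $\sup_q |\partial_{q_i}Z_j(q)| \le 1/(2n)$ together with Cauchy--Schwarz gives $\bigl|\sum_{i,j} p_i p_j \partial_{q_i}Z_j\bigr| \le |p|^2/2$, and $dh(Z) \ge 0$, so $dH_\varepsilon(\bar Z) \ge |p|^2/2 + \varepsilon \, dh(Z)$. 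On $\{H_\varepsilon = 1/2\}$ this is positive whenever $|p|>0$. If $|p|=0$, then $\varepsilon h(q)=1/2$, forcing $q$ close to $\partial V$ once $\varepsilon$ is small, and there $h = \dist(\cdot, \partial V)^{-2}$ and $Z$ points strictly outward, giving $d(\dist)(Z) \le -c_0<0$ in some collar, whence $dh(Z) = -2\dist^{-3}\,d(\dist)(Z) > 0$ strictly. So $dH_\varepsilon(\bar Z)>0$ on the whole boundary for $\varepsilon$ sufficiently small.

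The RCT condition then follows from the conformal identity $L_{\bar Z}\omega_n = L_{\sum p_i\partial_{p_i}}\omega_n + L_{X_{H_Z}}\omega_n = \omega_n + 0 = \omega_n$ combined with the outward-pointing property above.

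For the existence of $T_\varepsilon \to 0$ with $\varphi^{\bar Z}_{T_\varepsilon}(U^-_\varepsilon) \supset D^*V$, I will argue by backward-flowing a point $(q_0, p_0) \in D^*V$ and showing its image lies in $U^-_\varepsilon$ after time $T_\varepsilon$. The flow equations read $\dot q = Z(q)$ and $\dot p_i = p_i - \sum_j p_j \partial_{q_i} Z_j$, and the same Cauchy--Schwarz bound yields $\tfrac{d}{dt}|p|^2 \ge |p|^2$ in forward time, hence $|p(-t)|^2 \le |p_0|^2 e^{-t} < e^{-t}$ along the backward flow. Simultaneously, since $Z$ points outward, the backward flow $q(-t)$ pushes $q_0$ into the interior: within the collar we have $\tfrac{d}{dt}\dist(q(-t),\partial V) \ge c_0$ for small $t$, so $\dist(q(-t),\partial V) \ge c_0 t$ and thus $h(q(-t)) \le 1/(c_0 t)^2$; outside the collar $h$ is bounded by some $M$. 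Combining,
\[
H_\varepsilon\bigl(q(-t),p(-t)\bigr) \le \tfrac{1}{2} e^{-t} + \varepsilon \max\!\bigl(1/(c_0 t)^2, \, M\bigr),
\]
uniformly in $(q_0,p_0) \in D^*V$. Taking $T_\varepsilon = C\varepsilon^{1/3}$ with $C$ large enough makes the right-hand side strictly less than $1/2$, giving $\varphi^{\bar Z}_{-T_\varepsilon}(D^*V) \subset U^-_\varepsilon$ and hence $\varphi^{\bar Z}_{T_\varepsilon}(U^-_\varepsilon)\supset D^*V$. Since $T_\varepsilon \to 0$ as $\varepsilon \to 0$, this completes the lemma.

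The main technical point will be uniformity of the last estimate across $D^*V$: the argument near $\partial V$ must balance the singular blow-up of $h$ against the linear growth of $\dist(q(-t),\partial V)$, while in the interior $h$ is bounded and the $p$-shrinking alone suffices. Choosing $T_\varepsilon$ of order $\varepsilon^{1/3}$ to stitch these two regimes together is where the estimate is tightest.
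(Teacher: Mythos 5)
Your proof is correct. For the first two assertions your computation of $dH_\varepsilon(\bar Z)$, the lower bound $|p|^2/2 + \varepsilon\,dh(Z)$ (using $\sup|\partial_{q_i}Z_j|\le 1/2n$ and $dh(Z)\ge 0$), and the treatment of the $|p|=0$ locus via the strict positivity of $dh(Z)$ near $\partial V$ coincide with the paper's argument, as does the identity $L_{\bar Z}\omega_n=\omega_n$. For the third assertion the paper runs a \emph{soft} argument: from the two observations that $\sum_ip_i\,dp_i(\bar Z)>0$ on $\{|p|=1\}$ and that $\bar Z$ points outward on $\{q\in\partial V\}$, it deduces $\varphi^{\bar Z}_{-T}(\overline{D^*V})$ is a compact subset of $D^*V$ for each fixed $T>0$, then shrinks $\varepsilon$ until $U^-_\varepsilon$ captures this compact set, obtaining $T_\varepsilon\to 0$ by a diagonal argument over a sequence $T_k\to 0$. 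You instead quantify the same flow behaviour --- the Gronwall bound $|p(-t)|^2\le |p_0|^2e^{-t}$ coming from $\tfrac{d}{dt}|p|^2\ge|p|^2$, and the collar bound $\dist(q(-t),\partial V)\ge c_0t$ --- to bound $H_\varepsilon$ along the backward flow and extract an explicit rate $T_\varepsilon\sim\varepsilon^{1/3}$. This is more than the lemma needs but is a valid and arguably more informative route. The one place you should be a touch more careful is the claim $h(q(-t))\le\max\{(c_0t)^{-2},M\}$ uniformly over $q_0\in V$: to rule out a trajectory leaving the collar and later re-entering with $\dist<c_0t$, you should invoke the hypothesis $dh(Z)\ge 0$, which makes $t\mapsto h(q(-t))$ non-increasing, so once $h\le M$ it stays $\le M$. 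With that remark inserted the quantitative estimate is airtight.
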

\begin{proof}
By simple computations, 
\begin{align*}
\sum_i p_i dp_i(\bar{Z})(q,p) &= |p|^2 - \sum_{i,j} p_i p_j \partial_{q_i} Z_j(q)  \\
                              &\ge |p|^2 - \sum_{i,j} \frac{p_i^2+p_j^2}{2} \cdot \sup_{q \in V} |\partial_{q_i} Z_j(q)| \ge |p|^2/2, \\
dH_\varepsilon( \bar{Z}) (q,p) &= \sum_i p_i dp_i(\bar{Z}) (q,p) + \varepsilon dh(Z(q)) \ge |p|^2/2.
\end{align*}
Hence there holds the following claims: 
\begin{itemize}
\item $dH_\varepsilon(\bar{Z}) >0$ everywhere on $D^*V \setminus \{p=0\}$. 
\item $\sum_i p_i dp_i(\bar{Z})>0$ on $\{|p|=1\}$. 
\item $\bar{Z}$ points outwards on $\{(q,p) \mid q \in \partial V \}$. 
\end{itemize}
Since $Z$ points outwards on $\partial V$, for sufficiently small $\varepsilon>0$, $dh(Z)>0$ on 
$\{h = 1/2\varepsilon \}$.  
Hence the first property implies that $dH_\varepsilon(\bar{Z})>0$ on $\{H_\varepsilon =1/2\}$. 
By the second and third properties, $\varphi_{-T}^{\bar{Z}}(D^*V) \subset D^*V$ for any $T>0$. 
Hence, for sufficiently small $\varepsilon>0$, there holds
$\varphi_{-T}^{\bar{Z}}(D^*V) \subset U^-_\varepsilon$. This means that 
$D^*V \subset \varphi_T^{\bar{Z}}(U^-_\varepsilon)$. 
\end{proof}

By Lemma \ref{lem:Z}, there exist sequences $\varepsilon_1 > \varepsilon_2 > \cdots$, 
$T_1 > T_2 > \cdots$ such that: 
\begin{itemize}
\item $U^-_k:= \{H_{\varepsilon_k} < 1/2\}$ is an RCT-domain with respect to $\bar{Z}$. 
\item Setting $U^+_k := \varphi_{T_k}^{\bar{Z}}(U^-_k)$, there holds  $U^+_1 \supset U^+_2 \supset \cdots$ and $\bigcap_k U^+_k =\overline{D^*V}$. 
\item $\lim_{k \to \infty} \varepsilon_k= \lim_{k \to \infty} T_k = 0$. 
\end{itemize}
Since $U^+_k = \varphi_{T_k}^{\bar{Z}}(U^-_k)$ and $L_{\bar{Z}}\omega_n = \omega_n$, 
$\lim_{k \to \infty} c_\FH(U^+_k)/c_\FH(U^-_k) = \lim_{k \to \infty} e^{T_k} = 1$. 
On the other hand, $c_\FH(U^-_k) \le c_\FH(D^*V) \le c_\FH(U^+_k)$. Therefore there holds 
\[
\lim_{k \to \infty} c_\FH(U^-_k) = \lim_{k \to \infty} c_\FH(U^+_k) = c_\FH(D^*V). 
\]

\begin{lem}\label{lem:billiard}
Suppose that there exists a sequence $(\gamma_k)_k$ such that $\gamma_k \in \mca{P}_{\partial U^-_k}$, which satisfies 
$\sup_k \ind_{\CZ}(\gamma_k) \le m$ and $\lim_{k \to \infty} \mca{A}(\gamma_k) = a$, where 
$m$ is an integer and $a \ge 0$. 
Then, there exists a periodic billiard trajectory with at most $m$ bounce times and length equal to $a$. 
In particular, $a>0$. 
\end{lem}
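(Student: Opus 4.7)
The strategy is to realize each $\gamma_k \in \mca{P}_{\partial U^-_k}$ as (the $m_k$-fold cover of) a closed Hamiltonian orbit of $X_{H_{\varepsilon_k}}$ on the level set $\{H_{\varepsilon_k}=1/2\}$, and then show that these orbits Hausdorff-converge, as $\varepsilon_k \to 0$, to a periodic billiard trajectory on $\bar V$. Writing $\gamma_k(t) = (q_k(t),p_k(t))$, the equations of motion read $\dot q_k = p_k$ and $\dot p_k = -\varepsilon_k \nabla h(q_k)$, with the energy constraint $|p_k|^2 = 1 - 2\varepsilon_k h(q_k) \le 1$. Since $\bar Z$ has primitive $\lambda = \sum p_i dq_i - dH_Z$ and $\int_{\gamma_k} dH_Z = 0$, the action equals $\mca{A}(\gamma_k)=\int |p_k|^2\,dt$.

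First I would prove a uniform length bound. Reparametrising $q_k$ by arc length gives a $1$-Lipschitz curve $\tilde q_k:[0,L_k]\to\bar V$, and I would use the special form $h(q) = \dist(q,\partial V)^{-2}$ near $\partial V$ to control the time the orbit spends in the boundary layer $\{\varepsilon_k h \ge \delta\}$: there the inward force is of order $\varepsilon_k^{-1/2}$, so each excursion into the layer has duration $O(\varepsilon_k^{1/2})$. Combined with $\mca{A}(\gamma_k) = T_k - 2\varepsilon_k \int h(q_k)\,dt$ and $\mca{A}(\gamma_k)\to a$, this yields $L_k \le a + o(1)$ as $k \to \infty$, and also shows that the fraction of parameter time on which $|p_k|$ is bounded away from $1$ vanishes. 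Hence $L_k \to a$. By Arzel\`a-Ascoli, a subsequence of $\tilde q_k$ converges uniformly to a $1$-Lipschitz curve $\gamma:\R/a\Z \to \bar V$. On any compact subset of $V$, the force $\varepsilon_k \nabla h(q_k)$ tends to zero in $C^0$, so in the distributional limit $\ddot \gamma \equiv 0$ and $|\dot\gamma|\equiv 1$ off the closed set $\mca{B}:=\gamma^{-1}(\partial V)$. Standard boundary-layer analysis (rescaling time near each excursion into $\{\varepsilon_k h \ge \delta\}$ and using that $\nabla h$ is asymptotically normal to $\partial V$) shows that each excursion collapses to a single reflection satisfying the law stated in Definition \ref{def:billiard}; in particular $\mca{B}$ is finite.

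The hard part is to bound $\#\mca{B} \le m$ using $\ind_{\CZ}(\gamma_k)\le m$. I would argue that near each reflection event, the linearised flow of $X_{H_{\varepsilon_k}}$ picks up a bounded-below rotation in the $(q,p)$-plane transverse to the Reeb direction, because the Hessian of $H_{\varepsilon_k}$ blows up in the boundary layer while the motion in the tangential directions is essentially free; quantifying this via a crossing-form computation gives a contribution of at least $1$ to $\ind_{\CZ}(\gamma_k)$ per reflection for all large $k$, so more than $m$ bounces would violate the index hypothesis. This is the only place where the structural form of $h$ and the Hamiltonian $|p|^2/2$ really matter beyond the energy/action book-keeping. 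Finally, if $a = 0$ then $\gamma$ would be a constant loop, which is not a periodic billiard trajectory in the sense of Definition \ref{def:billiard} (it violates $|\dot\gamma| \equiv 1$ off $\mca{B}$); thus $a > 0$ as asserted.
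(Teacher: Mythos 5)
Your overall scheme --- realize each $\gamma_k$ as a Hamiltonian orbit $\Gamma_k$ of $X_{H_{\varepsilon_k}}$ on $\{H_{\varepsilon_k}=1/2\}$, write down $\dot{q}_k = p_k$, $\dot{p}_k=-\varepsilon_k\nabla h(q_k)$ with action $\mca{A}(\gamma_k)=\int|p_k|^2\,dt$, and pass to the limit $\varepsilon_k\to 0$ --- matches the paper's setup, and the action identity and the $O(\sqrt{\varepsilon_k})$ excursion-time estimate are correct in spirit. But the step you yourself flag as "the hard part," namely showing that the limiting trajectory has at most $m$ bounces, is precisely where your argument is not a proof. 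The "crossing-form computation" that is supposed to extract a contribution $\ge 1$ to $\ind_{\CZ}(\gamma_k)$ per reflection is never carried out, and it is genuinely subtle: the orbits $\Gamma_k$ may be degenerate (so the Conley--Zehnder index itself needs the careful definition from Section 3.2 of \cite{Ir}), and the tangential degrees of freedom along $\partial V$ generate their own index contributions that must be separated from the normal rotation. The paper avoids this entirely by translating $\ind_{\CZ}(\Gamma_k)$ into the Morse index $\ind_\Morse(q_k)$ of the configuration-space loop via the Morse index theorem (Theorem 7.3.1 in \cite{Long}), and then invoking Propositions 2.1 and 2.2 of \cite{AlM}, which already establish that a sequence of solutions of $\ddot{q}_k + \varepsilon_k \nabla h(q_k)=0$ with uniformly bounded Morse index and with periods $\tau_k$ bounded away from $0$ and $\infty$ subconverges to a periodic billiard trajectory whose bounce count is bounded by that Morse index. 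The Morse index is the right invariant here because it counts conjugate points, which localize cleanly at the boundary-layer excursions.

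A second gap is the treatment of the periods. You establish a length bound $L_k \le a + o(1)$ assuming the boundary-layer time is small, but you never rule out either $\liminf_k\tau_k = 0$ (collapse to a constant, which the paper excludes by a case analysis quoting Proposition 2.3 of \cite{AlM}) or $\limsup_k\tau_k = \infty$ (the orbit grazing $\partial V$ for a long time at low speed, which costs little action; the paper cites the argument on pp.~3312 of \cite{AlM} for this). Both bounds are prerequisites for your Arzel\`a--Ascoli step, so they cannot be left implicit. The assertion $a>0$ then follows in the paper as a byproduct of the $\liminf_k\tau_k>0$ argument, not from the a posteriori observation that a constant is not a billiard trajectory.
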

\begin{proof}
By our assumption, 
there exists $\Gamma_k: \R/ \tau_k \Z \to \{H_{\varepsilon_k} = 1/2\}$ such that 
\[
\dot{\Gamma}_k = X_{H_{\varepsilon_k}}(\Gamma_k),\qquad  \int_{\Gamma_k} \sum_i p_i dq_i = \mca{A}(\gamma_k), 
\qquad \ind_{\CZ}(\Gamma_k) \le m. 
\]
For the last estimate, see Lemma 8 in \cite{Ir}. 
Let $q_k: \R/ \tau_k \Z \to \R^n$ be the $q$-component of $\Gamma_k$. Then, by simple computations 
\[
\ddot{q}_k + \varepsilon_k \nabla h(q_k) \equiv 0, \qquad
\frac{|\dot{q}_k|^2}{2}+ \varepsilon_k h(q_k) \equiv 1/2, \qquad 
\int_0^{\tau_k} |\dot{q}_k|^2 dt = \mca{A}(\gamma_k).
\]
Moreover, the following identity is well-known (see Theorem 7.3.1 in \cite{Long}):
\[
\ind_{\Morse} (q_k) = \ind_\CZ(\Gamma_k) \le m. 
\]

To show that $(q_k)_k$ converges to a periodic billiard trajectory on $V$, first we show that 
$\liminf_k \tau_k >0$. 
If this is not the case, by taking a subsequence, we may assume that $\lim_{k \to \infty} \tau_k =0$. 
Then, according to Proposition 2.3 in \cite{AlM}, there exists $q_\infty \in \bar{V}$ such that $(q_k)_k$ converges to the constant loop 
at $q_\infty$ in $C^0$-norm. 
However, this leads to a contradiction by the following arguments:
\begin{itemize}
\item 
Suppose $q_\infty \in V$. Let $K$ be a compact neighborhood of $q$ in $V$. 
Then, for sufficiently large $k$, $\Im q_k \subset K$. 
On the other hand, $\lim_{k \to \infty} \| \varepsilon_k h|_K \|_{C^1} =0$, and $q_k$ satisfies 
$\ddot{q_k} + \varepsilon_k \nabla h(q_k) \equiv 0$, 
$|\dot{q_k}|^2/2 + \varepsilon_k h(q_k) \equiv 1/2$.
This is a contradiction. 
\item
Suppose $q_\infty \in \partial V$. Let $\nu$ be the inward normal vector of $\partial V$ at $q_\infty$. 
For any $k$, there exists $\theta_k \in S^1$ such that $\ddot{q_k}(\theta_k) \cdot \nu \le 0$. 
On the other hand, for any $x \in V$ sufficiently close to $q_\infty$, there holds 
$\nabla h(x) \cdot \nu <0$. This contradicts our assumption that $q_k$ satisfies $\ddot{q_k} + \varepsilon_k \nabla h(q_k) \equiv 0$ for any $k$. 
\end{itemize}

Thus we have proved $\liminf_{k} \tau_k >0$. 
On the other hand, there also holds $\limsup_k \tau_k < \infty$ by exactly the same arguments as on pp. 3312 in \cite{AlM}
(see also Lemma 15 in \cite{Ir}). 

Since $(\tau_k)_k$ satisfies $0< \liminf_k \tau_k \le \limsup_k \tau_k < \infty$ and $\ind_\Morse(q_k) \le m$, 
Proposition 2.1 and 2.2 in \cite{AlM} show that a certain subsequence of $(q_k)_k$ converges to a periodic billiard trajectory on $V$ 
with at most $m$ bounce times, and length $\lim_{k \to \infty} \mca{A}(\gamma_k) = a$. 
\end{proof}

Now, the proof of Proposition \ref{prop:billiard} is immediate.

\begin{proof} 
By Lemma \ref{lem:RCT} (4), there exists $\gamma_k \in \mca{P}_{\partial U^-_k}$ such that 
$\mca{A}(\gamma_k) = c_\FH(U^-_k)$ and $\ind_\CZ (\gamma_k) \le n+1$. 
Since $\lim_{k \to \infty} c_\FH(U^-_k) = c_\FH(D^*V)$, 
Lemma \ref{lem:billiard} concludes the proof. 
\end{proof} 

\subsection{Floer-Hofer capacity} 
In this subsection, we prove Corollary \ref{cor:FH}. 
First we need the following lemma: 

\begin{lem}\label{lem:epsilon}
For sufficiently small $\varepsilon>0$, the following holds:
\begin{enumerate}
\item For sufficiently large $k$, the natural homomorphisms 
\[
\SH_n^{[-1,\varepsilon)}(U^+_k) \to \SH_n^{[-1,\varepsilon)}(D^*V), \qquad
\SH_n^{[-1,\varepsilon)}(D^*V) \to \SH_n^{[-1,\varepsilon)}(U^-_k)
\]
are isomorphisms, and the above homology groups are isomorphic to $\Z_2$.  
\item The natural homomorphism 
$H_n(\bar{V},\partial V) \to H_n \bigl(\Lambda^{<\varepsilon}(\bar{V}), \Lambda^{<\varepsilon}(\bar{V}) \setminus \Lambda(V) \bigr)$ is an isomorphism. 
\end{enumerate}
\end{lem}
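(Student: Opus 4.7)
The plan is to prove (1) first, and then deduce (2) using Theorem~\ref{thm:main} together with an elementary homotopy argument. For (1), the main step is to establish a positive lower bound $\delta > 0$ with $\min \Sigma_{n+1}(\partial U^\pm_k) \ge \delta$ for all sufficiently large $k$. For $\partial U^-_k$ I would argue by contradiction: if no such bound existed, one could extract $\gamma_k \in \mca{P}_{\partial U^-_k}$ with $\ind_\CZ(\gamma_k) \le n+1$ and $\mca{A}(\gamma_k) \to 0$, and Lemma~\ref{lem:billiard} would produce a periodic billiard trajectory of length $0$, contradicting its last assertion. For $\partial U^+_k$, use that $U^+_k = \varphi^{\bar{Z}}_{T_k}(U^-_k)$ and $L_{\bar{Z}}\omega_n = \omega_n$: the diffeomorphism $\varphi^{\bar{Z}}_{T_k}$ is conformally symplectic with factor $e^{T_k}$, so Reeb orbits on $\partial U^+_k$ correspond to those on $\partial U^-_k$ with actions scaled by $e^{T_k} \to 1$, and the bound transfers.

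Fix $\varepsilon \in (0,\delta)$. Then Lemma~\ref{lem:RCT}(1) gives $\SH_n^{[-1,\varepsilon)}(U^\pm_k) \cong \Z_2$, and Lemma~\ref{lem:RCT}(2), applied to each of the inclusions $U^-_k \subset U^+_k$ and within the nested families $U^+_{k+1} \subset U^+_k$, $U^-_k \subset U^-_{k+1}$, makes the restriction maps between the corresponding $\SH_n^{[-1,\varepsilon)}$'s isomorphisms. Next, using $\bigcap_k U^+_k = \overline{D^*V}$ together with compactness, I would verify the cofinality $\mca{H}_{D^*V} = \bigcup_k \mca{H}_{U^+_k}$ (any admissible $H$ with $H|_{\overline{D^*V}}<0$ satisfies $H|_{\overline{U^+_k}}<0$ for large $k$). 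This yields $\SH_n^{[-1,\varepsilon)}(D^*V) \cong \varinjlim_k \SH_n^{[-1,\varepsilon)}(U^+_k) \cong \Z_2$ and that $\SH_n^{[-1,\varepsilon)}(U^+_k) \to \SH_n^{[-1,\varepsilon)}(D^*V)$ is an iso for large $k$. The remaining iso $\SH_n^{[-1,\varepsilon)}(D^*V) \to \SH_n^{[-1,\varepsilon)}(U^-_k)$ is then forced by Lemma~\ref{lem:RCT}(2) applied to the composition $U^-_k \subset D^*V \subset U^+_k$.

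For (2), combining Theorem~\ref{thm:main} with (1) gives $H_n(\Lambda^{<\varepsilon}(\bar V), \Lambda^{<\varepsilon}(\bar V) \setminus \Lambda(V)) \cong \Z_2$; independently $H_n(\bar V, \partial V;\Z_2) \cong \Z_2$ since $V$ is a connected $n$-manifold with smooth boundary. It remains to show the natural map is nonzero. Consider the evaluation $\ev_0 \colon \Lambda^{<\varepsilon}(\bar V) \to \bar V,\ \gamma \mapsto \gamma(0)$, which satisfies $\ev_0 \circ \iota^\varepsilon = \id_{\bar V}$ and sends $\Lambda^{<\varepsilon}(\bar V) \setminus \Lambda(V)$ into $N_\varepsilon := \{x \in \bar V \mid \dist(x,\partial V) < \varepsilon\}$ (a loop of length $<\varepsilon$ touching $\partial V$ has basepoint within distance $\varepsilon$ of $\partial V$). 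Hence the composition
\[
H_n(\bar V,\partial V) \xrightarrow{(\iota^\varepsilon)_n} H_n\bigl(\Lambda^{<\varepsilon}(\bar V),\,\Lambda^{<\varepsilon}(\bar V)\setminus\Lambda(V)\bigr) \xrightarrow{(\ev_0)_*} H_n(\bar V, N_\varepsilon)
\]
is induced by the inclusion $(\bar V,\partial V) \hookrightarrow (\bar V, N_\varepsilon)$. For $\varepsilon$ less than the collar width of $\partial V$, $N_\varepsilon$ deformation retracts onto $\partial V$, so this inclusion is a homotopy equivalence of pairs and the composite is an iso. Therefore $(\iota^\varepsilon)_n$ is injective, and between two copies of $\Z_2$ this forces it to be an iso.

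The most delicate technical point is the cofinality statement $\mca{H}_{D^*V} = \bigcup_k \mca{H}_{U^+_k}$; the inclusion $\supset$ is immediate, while $\subset$ requires that the nested compact sets $\overline{U^+_k}$ shrink down to $\overline{D^*V}$ in a sufficiently strong sense, which follows from $\bigcap_k U^+_k = \overline{D^*V}$ via a standard compactness argument.
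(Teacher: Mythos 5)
Your proof is correct and follows essentially the same route as the paper: Lemma~\ref{lem:billiard} to bound the action spectrum from below, Lemma~\ref{lem:RCT}~(1),(2) together with cofinality for part~(1), and an evaluation-map diagram to show $(\iota^\varepsilon)_n$ is injective for part~(2). The only cosmetic difference is in (2): the paper evaluates into $(\bar V,\bar V\setminus\{p\})$ for a fixed interior point $p$ (using that a loop of length $<2\dist(p,\partial V)$ touching $\partial V$ misses $p$), while you evaluate into $(\bar V,N_\varepsilon)$ for a collar neighborhood; both give the same injectivity conclusion.
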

\begin{proof}
Lemma \ref{lem:billiard} shows that $\liminf_k \min \Sigma_{n+1} (\partial U^-_k) >0$. 
Then, for any  
$0< \varepsilon < \liminf_k \min \Sigma_{n+1} (\partial U^-_k)$, 
(1) follows from Lemma \ref{lem:RCT} (1), (2). 
In particular, Theorem \ref{thm:main} shows that 
$H_n \bigl(\Lambda^{<\varepsilon}(\bar{V}), \Lambda^{<\varepsilon}(\bar{V}) \setminus \Lambda(V) \bigr) \cong \Z_2$. 
Hence (2) holds when 
$H_n(\bar{V},\partial V) \to H_n \bigl(\Lambda^{<\varepsilon}(\bar{V}), \Lambda^{<\varepsilon}(\bar{V}) \setminus \Lambda(V) \bigr)$ 
is injective. 

Let us fix $p \in V$. Then, if $0 < \varepsilon < 2\dist(p, \partial V)$, any $\gamma \in \Lambda^{<\varepsilon}(\bar{V}) \setminus \Lambda(V)$ satisfies 
$\gamma(S^1) \subset \bar{V} \setminus \{p\}$. Hence we get a commutative diagram 
\[
\xymatrix{
H_n(\bar{V},\partial V) \ar[r] \ar[rd] & H_n(\Lambda^{<\varepsilon}(\bar{V}), \Lambda^{<\varepsilon}(\bar{V}) \setminus \Lambda(V)) \ar[d]^{(\ev)_*} \\
                 & H_n(\bar{V}, \bar{V} \setminus \{p\})
}
\]
where the vertical arrow is induced by the evaluation map $\ev: \gamma \mapsto \gamma(0)$. 
Since the diagonal map is an isomorphism, the horizontal arrow is injective. 
\end{proof}

Finally, we prove Corollary \ref{cor:FH}. 

\begin{proof}[\textbf{Proof of Corollary \ref{cor:FH}}]
Our goal is to show that $c_\FH(D^*V)$ is equal to 
\[
b_*:=\inf \{ b \mid \text{$H_n (\bar{V}, \partial V) \to H_n \bigl(\Lambda^{<b}(\bar{V}), \Lambda^{<b}(\bar{V}) \setminus \Lambda(V) \bigr)$ vanishes} \}.
\]
Take $\varepsilon>0$ so that it satisfies the conditions in Lemma \ref{lem:epsilon}. Then, 
\begin{align*}
&\text{$H_n (\bar{V}, \partial V) \to H_n \bigl(\Lambda^{<b}(\bar{V}), \Lambda^{<b}(\bar{V}) \setminus \Lambda(V) \bigr)$ vanishes} \\
\iff&\text{$H_n \bigl(\Lambda^{<\varepsilon}(\bar{V}), \Lambda^{<\varepsilon}(\bar{V}) \setminus \Lambda(V) \bigr)
\to H_n \bigl(\Lambda^{<b}(\bar{V}), \Lambda^{<b}(\bar{V}) \setminus \Lambda(V) \bigr)$ vanishes} \\
\iff&\text{$\SH_n^{[-1,\varepsilon)}(D^*V) \to \SH_n^{[-1,b)}(D^*V)$ vanishes}. 
\end{align*}
The first equivalence follows from Lemma \ref{lem:epsilon} (2), and the second equivalence follows from Theorem \ref{thm:main}. 
Since $c_{\FH}(D^*V) = \lim_{k \to \infty} c_{\FH}(U^+_k) = \lim_{k \to \infty} c_{\FH}(U^-_k)$, it is enough to show that 
$c_{\FH}(U^+_k) \ge b_*$ and $c_{\FH}(U^-_k) \le b_*$ for any $k$. 
Since $U^-_k$ and $U^+_k$ are RCT-domains, Lemma \ref{lem:RCT} (3) implies that 
\[
c_{\FH}(U^\pm_k) = \inf \{ b \mid \text{$\SH_n^{[-1,\varepsilon)}(U^\pm_k) \to \SH_n^{[-1,b)}(U^\pm_k)$ vanishes} \}.
\]
Therefore, $c_{\FH}(U^+_k) \ge b_*$ follows from the commutativity of 
\[
\xymatrix{
\SH_n^{[-1,\varepsilon)}(U^+_k) \ar[r] \ar[d]_{\cong} & \SH_n^{[-1,b)}(U^+_k) \ar[d] \\
\SH_n^{[-1,\varepsilon)}(D^*V) \ar[r] & \SH_n^{[-1,b)}(D^*V).
}
\]
On the other hand, 
$c_{\FH}(U^-_k) \le b_*$ follows from the commutativity of 
\[
\xymatrix{
\SH_n^{[-1,\varepsilon)}(D^*V) \ar[r] \ar[d]_{\cong} & \SH_n^{[-1,b)}(D^*V) \ar[d] \\
\SH_n^{[-1,\varepsilon)}(U^-_k) \ar[r] & \SH_n^{[-1,b)}(U^-_k).
}
\]
\end{proof} 

\section{Floer-Hofer capacity and inradius} 

The goal of this section is to prove Theorem \ref{thm:inrad}. 
First of all, the lower bound $2r(V) \le c_\FH(D^*V)$ is immediate from the proof of Lemma \ref{lem:epsilon} (2). 
The upper bound $c_\FH(D^*V) \le 2(n+1) r(V)$ is a consequence of the following lemma: 

\begin{lem}\label{lem:upperbd}
For any $b> 2(n+1) r(V)$, there exists a continuous map $C: \bar{V} \times [0,1] \to \Lambda^{<b}(\bar{V})$ which satisfies the following properties: 
\begin{enumerate}
\item[(a):] For any $x \in \bar{V}$, $C(x,0)= c_x:=\text{constant loop at $x$}$. 
\item[(b):] Setting $\tilde{V}:= \partial V \times [0,1] \cup V \times \{1\}$, 
$C(\tilde{V}) \subset \Lambda^{<b}(\bar{V}) \setminus \Lambda(V)$.
\end{enumerate} 
\end{lem}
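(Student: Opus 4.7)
The strategy is to construct $C$ as a ``bouquet of petals'': for each $x$, the loop $C(x,t)$ consists of at most $n+1$ straight-line excursions from $x$ toward selected points of $\partial V$, with amplitudes scaled by $t$ and by bump functions, so that at $t=1$ every loop necessarily contains a point of $\partial V$.

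First I would fix $\delta > 0$ small enough that $2(n+1)(r(V)+\delta) < b$ and that a $2\delta$-tubular neighborhood $U_0 := \{x \in \bar{V} : \dist(x,\partial V) < 2\delta\}$ admits a continuous nearest-point retraction $r_0 : U_0 \to \partial V$ (using the smoothness of $\partial V$). Then I would cover the compact set $\{x \in \bar{V} : \dist(x,\partial V) \ge 2\delta\}$ by finitely many balls $U_i := B(y_i,\delta)$ with $\dist(y_i,\partial V) \ge 2\delta$, and choose $\phi(y_i) \in \partial V$ realizing the distance $\dist(y_i,\partial V) \le r(V)$. The key geometric observation $\bar{B}(y_i,\dist(y_i,\partial V)) \subset \bar{V}$ combined with a convex-combination estimate shows that for every $x \in U_i$ the segment $[x,\phi(y_i)]$ stays inside $\bar{V}$ and has length at most $r(V)+\delta$. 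Set $\phi_0 := r_0$ and $\phi_i \equiv \phi(y_i)$ for $i \ge 1$. Since $\bar{V} \subset \R^n$ has covering dimension $n$, I refine $\{U_i\}$ to a finite open cover of order at most $n+1$, shrink it to $\{V_i\}$ with $\overline{V_i} \subset U_i$ still covering $\bar{V}$, and pick continuous bump functions $\chi_i : \bar{V} \to [0,1]$ equal to $1$ on $V_i$ and supported in $U_i$.

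I would then define $C(x,t)$ by partitioning $S^1$ into $N+1$ intervals $J_0,\dots,J_N$ of equal length and letting $C(x,t)|_{J_i}$ traverse, at constant speed, the segment from $x$ to $x+t\chi_i(x)(\phi_i(x)-x)$ and return. Property (a) is immediate since at $t=0$ every petal is trivial. For (b): if $x \in V$, the covering property gives some $i$ with $\chi_i(x)=1$, so at $t=1$ the $i$-th petal reaches $\phi_i(x) \in \partial V$; if $x \in \partial V$, the loop already passes through $x$. The length equals $\sum_i 2t\chi_i(x)|\phi_i(x)-x|$, and since at most $n+1$ indices are active and each term is bounded by $2(r(V)+\delta)$, the total is at most $2(n+1)(r(V)+\delta) < b$. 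Each petal segment lies in $\bar{V}$ by the estimate above, so $C$ takes values in $\Lambda^{<b}(\bar{V})$.

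The main technical obstacle is continuity of $C$ into the $W^{1,2}$-topology on $\Lambda^{<b}(\bar{V})$, and the chosen design is precisely what handles this: since the partition of $S^1$ is fixed and only the amplitudes vary through the continuous functions $\chi_i$, the $L^2$-norm of the derivative of each petal varies continuously in $x$, whereas the more naive choice of letting petal \emph{widths} vanish as $\chi_i \to 0$ would cause derivative blow-up. A secondary subtlety, handled by the condition $\delta \le \dist(y_i,\partial V)$, is that $V$ need not be convex, so the straight segments $[x,\phi_i(x)]$ must be verified to remain in $\bar{V}$.
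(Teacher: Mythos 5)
Your construction is correct and follows the same overall strategy as the paper: cover $\bar V$, refine to a cover of order at most $n+1$ using the covering dimension of $\bar V\subset\R^n$, attach a ``petal'' homotopy to each element of the cover via bump functions, concatenate the petals along a fixed partition of $S^1$, and conclude with the same length bookkeeping $(n+1)\cdot(\text{petal length})<b$ and the same case distinction $x\in V$ vs.\ $x\in\partial V$ for property (b). The only genuine difference is the local ingredient. The paper's Lemma \ref{lem:sigmap} takes, for each point $p$, an \emph{arbitrary} path to $\partial V$ of length $<\rho/2$ (whose existence is exactly the definition of $r(V)$), extends it continuously to nearby starting points, and traces the path partway out and back as $t$ grows; since the path lives in $\bar V$ by construction, no convexity question arises. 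You instead insist on \emph{straight-line} petals, which forces you to justify that the segment $[x,\phi_i(x)]$ remains in $\bar V$ when $V$ is not convex; the closed-ball argument $[x,\phi(y_i)]\subset\bar B(y_i,\dist(y_i,\partial V))\subset\bar V$ handles interior points cleanly, but you then need the separate tubular-neighborhood chart $U_0$ with the nearest-point retraction to cover the boundary strip, an extra ingredient the paper avoids. The paper's choice of arbitrary paths is more economical; your choice makes the $W^{1,2}$-continuity of each petal essentially a one-line linear-algebra observation (fixed tent profile times a continuously varying amplitude vector), whereas the paper implicitly leaves the analogous continuity of $(x,t)\mapsto\Gamma(x)(2t\,\cdot\,)$ to the reader. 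Both are complete; yours trades a simpler continuity check for a slightly more elaborate cover.

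One small point worth tightening if you write this up: when you refine $\{U_i\}$ to a cover of order $\le n+1$, record the assignment of each refined set to a parent $U_{i(j)}$ so that $\phi_j:=\phi_{i(j)}$ is well-defined on $\supp\chi_j$; and note that for the $j$'s with $i(j)=0$, the vector field $\chi_j\cdot(r_0-\id)$ extends continuously by zero to all of $\bar V$ because $\supp\chi_j$ is a compact subset of $U_0$, which is what makes the $0$-th type of petal globally continuous.
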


Let us check that Lemma \ref{lem:upperbd} implies the upper bound. 
By Corollary \ref{cor:FH}, 
it is enough to show that  
\[
(\iota^b)_*: H_n(\bar{V}, \partial V) \to H_n(\Lambda^{<b}(\bar{V}), \Lambda^{<b}(\bar{V}) \setminus \Lambda(V))
\]
vanishes when $b> 2(n+1) r(V)$. 
Take $C: \bar{V} \times [0,1] \to \Lambda^{<b}(\bar{V})$ as in Lemma \ref{lem:upperbd}. 
Setting $I: \bar{V} \to \bar{V} \times [0,1]$ by $I(x):=(x,0)$, consider the following diagram:
\[
\xymatrix{ 
H_*(\bar{V}, \partial V) \ar[d]_{I_*} \ar[r]^-{(\iota^b)_*} &H_*(\Lambda^{<b}(\bar{V}), \Lambda^{<b}(\bar{V}) \setminus \Lambda(V)). \\
H_*(\bar{V} \times [0,1], \tilde{V}) \ar[ru]_{C_*} &
}
\]
(a) in Lemma \ref{lem:upperbd} implies that the above diagram is commutative. 
It is easy to see that $H_*(\bar{V} \times [0,1], \tilde{V})=0$. Therefore $(\iota^b)_*=0$. 
This completes the proof of $c_\FH(D^*V) \le 2(n+1) r(V)$ modulo Lemma \ref{lem:upperbd}. 

Now our task is to prove Lemma \ref{lem:upperbd}.
Since $b> 2(n+1) r(V)$, one can take $\rho$ so that 
$\rho > 2r(V)$ and $(n+1)\rho<b$. 
We fix $\rho$ in the following argument. 

\begin{lem}\label{lem:sigmap}
For any $p \in \bar{V}$, there exists $U_p$, a neighborhood of $p$ in $\bar{V}$ and 
a continuous map $\sigma_p: U_p \times [0,1] \to \Lambda^{<\rho}(\bar{V})$ so that the following holds for any $x \in U_p$: 
\begin{itemize}
\item For any $0 \le t \le 1$, $\sigma_p(x,t)$ maps $0 \in S^1$ to $x$. 
\item $\sigma_p(x,0) = c_x$. 
\item $\sigma_p(x,1) \notin \Lambda(V)$. 
\end{itemize} 
\end{lem}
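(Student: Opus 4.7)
The plan is to build $\sigma_p(x,t)$ as a loop that runs forward along a short path $\alpha_x$ from $x$ to some point $q_x \in \partial V$ and then retraces it; by varying the fraction of $\alpha_x$ we traverse, we interpolate from the constant loop $c_x$ (at $t=0$) to a loop passing through $q_x \in \partial V$ (at $t=1$). The bulk of the work is to produce a continuous family of paths $\alpha_x : [0,1] \to \bar{V}$ starting at $x$, ending on $\partial V$, and having length strictly less than $\rho/2$; the loop construction and its properties then follow mechanically. I would handle two cases: $p \in V$ and $p \in \partial V$.

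For $p \in V$, pick a nearest point $q_p \in \partial V$, so $|q_p - p| = \dist(p, \partial V) \le r(V) < \rho/2$, and set $v := (q_p - p)/|q_p - p|$. Because $q_p$ minimizes distance, $v$ is normal to $\partial V$ at $q_p$, so the ray $s \mapsto p + sv$ meets $\partial V$ transversally at $s = \dist(p, \partial V)$. The implicit function theorem then provides a neighborhood $U_p$ of $p$ in $V$ on which the first-hitting time $T_x := \inf\{s \ge 0 \mid x + sv \in \partial V\}$ is defined and depends smoothly on $x$, with $T_p = \dist(p, \partial V)$. Shrinking $U_p$ we can ensure $T_x < \rho/2$, and set $\alpha_x(s) := x + s T_x v$ for $s \in [0,1]$ and $q_x := \alpha_x(1)$. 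By the first-hitting property, $\alpha_x$ never crosses $\partial V$ before $s = 1$, and since $x \in V$ and $V$ lies locally on one side of $\partial V$, we conclude $\alpha_x([0,1]) \subset \bar{V}$.

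For $p \in \partial V$, choose $U_p$ to lie inside a tubular neighborhood of $\partial V$ in $\R^n$; on such a neighborhood the nearest-point projection $\pi : U_p \to \partial V$ is smooth, and for $x \in U_p \cap \bar{V}$ the segment from $x$ to $\pi(x)$ lies in $\bar{V}$. Shrinking $U_p$ further so that $\dist(x, \partial V) < \rho/2$ throughout, set $q_x := \pi(x)$ and $\alpha_x(s) := x + s(q_x - x)$ for $s \in [0,1]$.

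With $(\alpha_x)_{x \in U_p}$ in hand, define, for $\theta \in S^1 = [0,1]/(0 \sim 1)$,
\[
\sigma_p(x,t)(\theta) := \begin{cases} \alpha_x(2 t \theta) & \theta \in [0, 1/2], \\ \alpha_x(2 t (1-\theta)) & \theta \in [1/2, 1]. \end{cases}
\]
This is a piecewise-affine (hence $W^{1,2}$) loop in $\bar{V}$ of length $2 t \cdot \text{length}(\alpha_x) < \rho$, depending continuously on $(x,t)$. Directly from the definition, $\sigma_p(x,t)(0) = x$, $\sigma_p(x,0) = c_x$, and $\sigma_p(x,1)(1/2) = q_x \in \partial V$, so $\sigma_p(x,1) \notin \Lambda(V)$. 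The main obstacle is the $p \in V$ case: one must justify that the segment from $x$ to $\partial V$ along the normal direction $v$ really stays inside $\bar{V}$ and that $T_x$ depends continuously on $x$. Both facts rest on the transversality built into the nearest-point construction combined with smoothness of $\partial V$, which is why the argument splits along whether $p$ does or does not lie on $\partial V$.
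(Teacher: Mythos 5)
Your construction is essentially the same as the paper's: both build $\sigma_p(x,t)$ as the back-and-forth loop along a short path $\alpha_x$ from $x$ to $\partial V$, traversed up to fraction $t$, so that at $t=0$ one has the constant loop and at $t=1$ the loop hits $\partial V$. The difference lies in how the continuous family $(\alpha_x)_{x\in U_p}$ is produced. The paper simply fixes one smooth path $\gamma$ from $p$ to $\partial V$ of length $<\rho/2$ and then \emph{asserts} the existence of a neighborhood $U_p$ and a continuous map $\Gamma:U_p\to W^{1,2}([0,1],\bar V)$ with $\Gamma(p)=\gamma$, $\Gamma(x)(0)=x$, $\Gamma(x)(1)\in\partial V$ and uniformly short lengths — it does not spell out how $\Gamma$ is built. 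You instead construct $\alpha_x$ explicitly as a straight segment: in the interior case, along the fixed normal direction $v$ to a nearest boundary point of $p$, up to the first hitting time $T_x$; in the boundary case, along the segment to the nearest-point projection $\pi(x)\in\partial V$. This buys concreteness at the cost of a case split and of having to verify that the implicit-function-theorem branch of $T_x$ really is the first hitting time (hence that $\alpha_x([0,1))\subset V$). That verification — which you correctly flag as the main outstanding point — is a compactness argument: the truncated segment $[p,\,p+(T_p-\delta)v]$ is a compact subset of the open ball $B(p,\dist(p,\partial V))\subset V$, hence has positive distance to $\R^n\setminus V$, and near $q_p$ the graph description of $\partial V$ controls the remaining piece; shrinking $U_p$ accordingly makes $T_x$ the genuine exit time, smooth in $x$, and bounded below $\rho/2$. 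With that filled in, your proof is correct, a bit more work but also a bit more self-contained than the paper's, which leaves the existence of $\Gamma$ to the reader.
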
 
\begin{proof}
Since $\rho/2 > r(V)$, 
there exists a smooth path $\gamma:[0,1] \to \bar{V}$ such that $\gamma(0)=p$, $\gamma(1) \in \partial V$
and length of $\gamma$ is less than $\rho/2$. 
There exists a neighborhood $U_p$ of $p$ and a continuous map 
$\Gamma: U_p \to W^{1,2}([0,1],\bar{V})$ such that 
\begin{itemize}
\item $\Gamma(p)= \gamma$. 
\item For any $x \in U_p$, $\Gamma(x)(0)=x$, $\Gamma(x)(1) \in \partial V$. 
\item For any $x \in U_p$, length of $\Gamma(x)$ is less than $\rho/2$. 
\end{itemize} 
Now define $\sigma_p: U_p \times [0,1] \to \Lambda^{<\rho}(\bar{V})$ by 
\[
\sigma_p(x,t)(\tau) = \begin{cases}
                       \Gamma(x)(2t\tau) &(0 \le \tau \le 1/2) \\
                       \Gamma(x)(2t(1-\tau)) &(1/2 \le \tau \le 1)
                      \end{cases}. 
\]
Then, it is immediate to see that $\sigma_p$ satisfies the required conditions. 
\end{proof} 

\begin{lem}\label{lem:covering}
Let $(U_p)_{p \in \bar{V}}$ be an open covering of $\bar{V}$ as in Lemma \ref{lem:sigmap}. 
Then, there exists $(W_j)_{1 \le j \le m}$, which is a refinement of $(U_p)_{p \in \bar{V}}$ and such that: 
\begin{quote}
For any $x \in \bar{V}$, the number of $j$ such that $x \in W_j$ is at most $n+1$. 
\end{quote}
\end{lem}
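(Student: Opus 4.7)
The plan is to prove Lemma \ref{lem:covering} by realising the refinement as (the traces on $\bar V$ of) the open stars of the vertices of a sufficiently fine simplicial triangulation of $\R^n$. This is the standard argument showing that a subset of $\R^n$ has Lebesgue covering dimension at most $n$; I include it because the statement is used crucially in Section 7 and the proof is short.

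First I would reduce to a finite situation. Since $\bar V$ is compact and each $U_p$ is a neighborhood of $p$ in $\bar V$, extract a finite subcover $U_{p_1}, \ldots, U_{p_N}$ and fix a Lebesgue number $\delta > 0$ for this subcover: every subset of $\bar V$ of Euclidean diameter less than $\delta$ is contained in some $U_{p_i}$.

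Next I would triangulate. Choose a bounded open set $\Omega \subset \R^n$ with $\bar V \subset \Omega$, and a simplicial triangulation $T$ of $\Omega$ for which every open vertex star $\Star(v)$ has diameter less than $\delta$. Concretely, one may take the standard simplicial subdivision of the cubical lattice $\lambda \Z^n$ intersected with $\Omega$, with $\lambda$ sufficiently small, because rescaling by $\lambda$ scales all star diameters linearly. Let $v_1, \ldots, v_m$ be the finitely many vertices of $T$ with $\Star(v_j) \cap \bar V \neq \emptyset$, and set
\[
W_j := \Star(v_j) \cap \bar V \qquad (1 \le j \le m).
\]
Each $W_j$ is open in $\bar V$ with diameter $< \delta$, hence $W_j \subset U_{p_{i(j)}}$ for some index $i(j)$, so $(W_j)_{1\le j \le m}$ is indeed a refinement of $(U_p)_{p\in \bar V}$; and the $W_j$ cover $\bar V$ because the open stars of vertices always cover $\Omega$.

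Finally, the multiplicity bound is purely combinatorial. Any $x \in \bar V \subset \Omega$ lies in the interior of a unique closed simplex $\sigma \in T$, and by definition of the open star one has $x \in \Star(v)$ if and only if $v$ is a vertex of $\sigma$. Since $\dim \sigma \le n$, the simplex $\sigma$ has at most $n+1$ vertices, so $x$ belongs to at most $n+1$ of the sets $\Star(v_j)$, which is exactly the desired bound. The only mildly delicate point in the argument is arranging the triangulation to have the prescribed uniform diameter control on open stars, but, as noted above, this reduces to a single rescaling and presents no real obstacle.
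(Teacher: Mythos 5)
Your argument is correct and follows the same essential strategy as the paper: cover $\bar V$ by open stars of a sufficiently fine triangulation and read off the multiplicity bound from the fact that each point lies in the relative interior of a unique simplex of dimension at most $n$. The one genuine (if minor) difference is where you triangulate. The paper takes a smooth triangulation of $\bar V$ itself and sets $W_j = \Star(v_j)$, which relies on the (nontrivial) fact that a compact smooth manifold with boundary admits a triangulation; in exchange, the stars are automatically subsets of $\bar V$. You instead triangulate an ambient piece of $\R^n$ by a rescaled lattice subdivision and set $W_j = \Star(v_j) \cap \bar V$, which is entirely elementary but requires the extra (harmless) restriction step and a short remark that the multiplicity does not increase upon intersecting with $\bar V$. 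Both routes deliver the same bound, and your version is arguably more self-contained.

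One small clean-up worth making: triangulating a bounded open set $\Omega$ by "intersecting" a lattice triangulation with $\Omega$ is not quite a simplicial complex, since simplices get cut. The simplest fix is to drop $\Omega$ entirely and use the fine lattice triangulation $T$ of all of $\R^n$; local finiteness of $T$ together with compactness of $\bar V$ already guarantees that only finitely many vertex stars meet $\bar V$, which is all you need to extract the finite family $(W_j)_{1\le j\le m}$.
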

\begin{proof}
Actually this lemma is valid for any covering of $\bar{V}$. 
By Lebesgue's number lemma, one can take $\delta>0$ so that any subset of $\bar{V}$ with diameter less than $\delta$ is contained in some $U_p$. 
We fix such a $\delta$, and take a (smooth) triangulation $\Delta$ of $\bar{V}$ so that every simplex has diameter less than $\delta/2$. 
For each vertex $v$ of $\Delta$, $\Star(v)$ denotes the union of all open faces of $\Delta$ (we include $v$ itself), 
which contain $v$ in their closures. 

Let $v_1,\ldots, v_m$ be vertices of $\Delta$, and set $W_j:=\Star(v_j)$ for $j=1,\ldots, m$. 
Since each $W_j$ has diameter less than $\delta$, $(W_j)_{1 \le j \le m}$ is a refinement of $(U_p)_{p \in \bar{V}}$. 
Moreover, if $x \in \bar{V}$ is contained in a $k$-dimensional open face of $\Delta$, 
the number of $j$ such that $x \in W_j$ is exactly $k+1$. 
Hence $(W_j)_{1 \le j \le m}$ satisfies the required condition. 
\end{proof} 

\begin{rem}
The above proof of Lemma \ref{lem:covering} is the same as the standard proof of the fact that any $n$-dimensional polyhedron has Lebesgue
covering dimension $\le n$ (see Section 2 in \cite{Fed}). 
\end{rem} 

Take $(W_j)_{1 \le j \le m}$ as in Lemma \ref{lem:covering}. 
Since it is a refinement of $(U_p)_{p \in \bar{V}}$, 
one can define a continuous map $\sigma_j: W_j \times [0,1] \to \Lambda^{<\rho}(\bar{V})$ so that the following holds for any $x \in W_j$: 
\begin{itemize}
\item For any $0 \le t \le 1$, $\sigma_j(x,t) \in \Lambda^{<\rho}(\bar{V})$ maps $0 \in S^1$ to $x$. 
\item $\sigma_j(x,0) = c_x$. 
\item $\sigma_j(x,1) \notin \Lambda(V)$. 
\end{itemize} 

For each $1 \le j \le m$, let us take $\chi_j \in C^0(\bar{V})$ so that 
$0 \le \chi_j \le 1$, $\supp \chi_j \subset W_j$, and $K_j:= \{ x \in V \mid \chi_j(x)=1 \}$ satisfies $\bigcup_{1 \le j \le m} K_j = \bar{V}$.
We define $\tilde{\sigma}_j: \bar{V} \times [0,1] \to \Lambda^{<\rho}(\bar{V})$ by
\[
\tilde{\sigma}_j(x,t) = \begin{cases} 
                         c_x &(x \notin W_j) \\
                         \sigma_j(x, \chi_j(x) t) &( x \in W_j) 
                        \end{cases}.
\]
Then, it is immediate that $\tilde{\sigma}_j$ satisfies the following properties: 
\begin{itemize}
\item For any $x \in \bar{V}$ and $0 \le t \le 1$, $\tilde{\sigma}_j(x,t)$ maps $0 \in S^1$ to $x$. 
\item For any $x \in \bar{V}$, $\tilde{\sigma}_j(x,0)=c_x$. 
\item For any $x \in K_j$, $\tilde{\sigma}_j(x,1) \notin \Lambda(V)$. 
\end{itemize}

To finish the proof of Lemma \ref{lem:upperbd}, we introduce the following notation:

\begin{defn}\label{defn:concatanation}
For any $\gamma_1,\ldots, \gamma_m \in \Lambda(\bar{V})$ such that $\gamma_1(0)= \cdots = \gamma_m(0)$, we define their \textit{concatenation} 
$\con(\gamma_1,\ldots,\gamma_m) \in \Lambda(\bar{V})$ by 
\[
\con(\gamma_1,\ldots,\gamma_m)(t):= \gamma_{j+1}\biggl( m \biggl(t-\frac{j}{m}\biggr)\biggr) \, 
\biggl( \frac{j}{m} \le t \le \frac{j+1}{m}, \, j=0,\ldots,m-1 \biggr). 
\]
\end{defn} 

\begin{proof}[\textbf{Proof of Lemma \ref{lem:upperbd}}]
We define $C: \bar{V} \times [0,1] \to \Lambda(\bar{V})$ by 
\[
C(x,t) := \con \bigl( \tilde{\sigma}_1(x,t), \ldots, \tilde{\sigma}_m(x,t)\bigr). 
\]
Since $\tilde{\sigma}_1(x,t), \ldots, \tilde{\sigma}_m(x,t)$ maps $0 \in S^1$ to $x$, the above definition makes sense. 
We claim that this map $C$ satisfies all requirements in Lemma \ref{lem:upperbd}. 

First we have to check that length of $C(x,t)$ is less than $b$. Obviously, length of $C(x,t)$ is a sum of the lengths of 
$\tilde{\sigma}_j(x,t)$ for $j=1,\ldots,m$. 
If $x \notin W_j$, $\tilde{\sigma}_j(x,t)=c_x$ by definition. Hence $\tilde{\sigma}_j(x,t)$ has length $0$. 
Moreover, the number of $j$ such that $x \in W_j$ is at most $n+1$, by Lemma \ref{lem:covering}. 
Hence length of $C(x,t)$ is less than $(n+1) \rho < b$. 
Finally we verify conditions (a), (b). 
(a) follows from: 
\[
C(x,0) = \con \bigl( \tilde{\sigma}_1(x,0), \ldots, \tilde{\sigma}_m(x,0)\bigr) = \con(c_x, \ldots, c_x) = c_x. 
\]
To verify (b), we have to check the following two claims:
\begin{enumerate} 
\item[(b-1):] For any $x \in \bar{V}$, $C(x,1) \notin \Lambda(V)$. 
\item[(b-2):] For any $x \in \partial V$ and $0 \le t \le 1$, $C(x,t) \notin \Lambda(V)$. 
\end{enumerate} 
We check (b-1). Since $(K_j)_{1 \le j \le m}$ is a covering of $\bar{V}$, there exists $j$ such that $x \in K_j$. 
Then $\tilde{\sigma}_j(x,1) \notin \Lambda(V)$, therefore $C(x,1) \notin \Lambda(V)$. 
(b-2) is clear since $C(x,t)$ maps $0 \in S^1$ to $x \notin V$. 
\end{proof}

\textbf{Acknowledgements.}
The author would like to thank an anonymous referee for many helpful suggestions on the first draft of this paper.
The author is supported by JSPS Grant-in-Aid for Young Scientists (B) (13276352).

\end{document}